\newtheorem{thm}{Theorem}[section]
\newtheorem{lem}[thm]{Lemma}
\newtheorem{rmk}[thm]{Remark}
\newtheorem{defi}[thm]{Definition}
\newtheorem{prop}[thm]{Proposition}
\newtheorem{cor}[thm]{Corollary}
\newtheorem{assump}[thm]{Assumption}
\newtheorem*{claim*}{Claim}
\def\F{\mathcal{F}}
\def\R{\mathbb{R}}
\def\S{\mathbb{S}}
\def\O{\Omega}
\DeclareMathOperator{\dist}{d}
\DeclareMathOperator{\diam}{diam}
\def\SO{S_{\Omega}}
\begin{document}
\title{A revisit of the velocity averaging lemma: on the regularity of stationary Boltzmann equation in a bounded convex domain}
\author[IC]{I-Kun Chen}
\address{(I.-K. Chen)Institute of Applied Mathematical Sciences, National Taiwan University, No. 1, Sec. 4, Roosevelt Rd., Taipei 10617, Taiwan }
\email{ikun.chen@gmail.com}
\author{Ping-Han Chuang}
\address{(P.-H. Chuang)Department of Mathematics, National Taiwan University, No. 1, Sec. 4, Roosevelt Rd., Taipei 10617, Taiwan }
\email{d09221003@ntu.edu.tw}
\author{Chun-Hsiung Hsia}
\address{(C.-H. Hsia)Institute of Applied Mathematical Sciences, National Taiwan University, No. 1, Sec. 4, Roosevelt Rd., Taipei 10617, Taiwan }
\email{willhsia@math.ntu.edu.tw}
\author{Jhe-Kuan Su}
\address{(J.-K. Su)Department of Mathematics, National Taiwan University, No. 1, Sec. 4, Roosevelt Rd., Taipei 10617, Taiwan }
\email{hsnu127845@gmail.com}
\date{\today}
\maketitle
\begin{abstract}
    In the present work, we adopt the idea of velocity averaging lemma to establish regularity for stationary linearized Boltzmann equations 
    in a bounded convex domain. Considering the incoming data, with three iterations, we establish regularity in fractional Sobolev space in space 
    variable up to order $1^-$.  \vspace{0.5cm}\\
    \noindent $Keywords:$ Boltzmann equation; regularity; averaging lemmas; fractional Sobolev spaces 
\end{abstract}

	\tableofcontents
    \section{Introduction}
    The celebrated velocity averaging lemma reveals that the combination of transport and averaging in velocity yields regularity in space variable \cite{golse85,golse88}. 
    This is one of the key features that DiPerna and Lions used to attack the Cauchy problem for Boltzmann equations \cite{diperna89}. 
    It is natural to adopt this technique to the study  of  regularity problem of linearized Boltzmann equation in the whole space \cite{chuang19}. 
    In \cite{golse88}, in addition to the 
    applications to the whole space domains, the authors also investigate the applications to bounded convex domains for transport equations. 
    By adopting zero extension, they reduce the bounded domain case to the whole space case. 
    In contrast with the whole space case in \cite{chuang19}, which the regularity can be improved indefinitely 
    by iterations, when applying the trick in \cite{golse88} in a bounded domain, one can only 
    proceed for one iteration. 
    Notice that the main tool of velocity averaging lemma, namely,  the method of Fourier transform, does not translate well on a bounded domain. In this article, we adopt Slobodeckij semi-norm as an alternative concept of Sobolev function class. This  shifts the difficulty to singular integrals. To calculate these integrals, we encounter some estimates related to the geometry of the boundary.  The way we do it is to properly compare convex domains with spherical domains in $\R^3$ 
    so that we can build up estimates for convex domains based on those for spherical domains. Another key feature to prove the boundedness of the singular integrals is the change of variables which will be addressed in Lemma~\ref{lemma:ChangeOfVariable1} and Lemma~\ref{lemma:ChangeOfVariable2}. 
    Considering the incoming data, with three iterations, we establish regularity in fractional Sobolev space in space 
    variable up to order $1^-$.

    Recall the velocity averaging lemma in \cite{desvillettes03,golse88}.  Suppose $u$ is an $L^2$ solution to the transport equation
    \begin{equation*}
    v\cdot\nabla_x u=G(x,v),\ \ \ \ \ (x,v)\in\mathbb{R}^n\times\mathbb{R}^n,
    \end{equation*}
    where $G\in L^2.$ Let
    \begin{equation*}
    \bar u(x):= \int_{\mathbb{R}^n} u(x,v)\psi(v)\,dv,
    \end{equation*}
    where $\psi$ is a bounded function with compact support. 
    Then, we have 
    \begin{equation*}
    \bar u(x) \in \tilde{H}^{\sfrac12}(\mathbb{R}^n).
    \end{equation*}
    Here, the Sobolev space is  generalized to non-integer order via the Fourier transform as follows.
    
    \begin{defi}\label{SobolveFourier}
    We say $u:\mathbb{R}^3\to \mathbb{R}$ is in $\tilde{H}^s_x(\R^3)$ if
    \begin{equation}
    \|u\|_{\tilde{H}^s_x(\R^3)}=\left(\int_{\R^3}(1+|\xi|^2)^{ s}|\F(u)(\xi)|^2\,d\xi \right)^\frac{1}{2}<\infty,
    \end{equation} 
    where $\F(u)(\xi)$ is the Fourier transform of $u$, i.e.,
    \begin{equation*}
    \F(u)(\xi)=(2\pi)^{-\frac32}\int_{\mathbb{R}^3}u(x)e^{-i\xi\cdot x}\, d x.
    \end{equation*}
    \end{defi}
    The velocity averaging lemma demonstrates that the regularity in the transport direction can  be converted  to the regularity in space variable after averaging with weight $\psi$.

    First, we recapitulate the stationary linearized Boltzmann equation in the whole space, 
    
    \begin{equation}
    v\cdot \nabla_xf(x,v)=L(f),
    \end{equation}
    where $f$ is the velocity distribution function and $L$ is the linearized collision operator. 
    The linearized collision operator under consideration can be decomposed into a multiplicative operator and an integral operator. 
    \begin{equation}\label{LinearizedBE}
    L(f)=-\nu(v)f+K(f),
    \end{equation}
    where 
    \begin{equation*}
    K(f)=\int_{\R^3} k(v,v_*)f(v_*)\,dv_*.
    \end{equation*}
    Therefore, we can rewrite \eqref{LinearizedBE} as
    \begin{equation}
    \nu(v)f+v\cdot\nabla_xf=K(f).
    \end{equation}
    Observing that the integral operator $K$ can serve as an agent of averaging, it is natural to imagine applying velocity averaging lemma to linearized Boltzmann equation.  In case the source term $\Psi(x,v)$ is imposed, i.e.,
    \begin{equation}
    \nu(v)f+v\cdot\nabla_xf=K(f)+\Psi(x,v),
    \end{equation}
    one can derive an integral equation
    
    \begin{equation}\begin{split}
    f(x,v)&=\int_0^\infty e^{-\nu(v)t}[K(f)(x-vt,v)+\Psi(x-vt,v)]\,dt\\
    &=:S(K(f)+\Psi)\\&=SK(f)+S(\Psi),\end{split}
    \end{equation}
    where 
    \begin{equation} \label{eq:SDef}
    S(h)(x,v):= \int^\infty_0 e^{-\nu(v)t} h(x-vt,t)\, dt.
    \end{equation}
    \noindent    Performing the  Picard iteration, formally we can derive that
    \begin{equation}
    f=\sum_{k=0}^\infty S(KS)^k(\Psi).
    \end{equation}
    By carefully adapting the idea of velocity averaging lemma, we find every two iterations improve regularity in space of order $\frac12$. 
    More precisely, the following lemma was proved in \cite{chuang19}.
    \begin{lem} \label{lemma:ksk}
        The operator $KSK:L^2_v(\R^3;\tilde{H}^{s}_x(\R^3)) \to L^2_v(\R^3;\tilde{H}^{s+\frac{1}{2}}_x(\R^3))$ is bounded for any $s\geq 0$. 
    \end{lem}
    \noindent Here, the mixed fractional Sobolev space is defined as follows.
    
    \begin{defi}\label{SobolveFourier}
    We say $u:\mathbb{R}^3\times\mathbb{R}^3\to \mathbb{R}$ is in $L^2_v(\R^3;\tilde{H}^s_x(\R^3)) $ if
    \begin{equation}
    \|u\|_{L^2_v(\R^3;\tilde{H}^s_x(\R^3))}=\left(\int_{\R^3}\int_{\R^3}(1+|\xi|^2)^{s}|\F(u)(\xi,v)|^2\,d\xi dv\right)^\frac{1}{2}<\infty,
    \end{equation} 
    where $\F(u)(\xi,v)$ is the Fourier transform of $u$ with respect to space variable $x$.
    \end{defi}

    Motivated by the successful application of velocity averaging lemma to the study of regularity issue for stationary linearized Boltzmann equation in the whole space, we consider to give a similar account for the regularity problem in a bounded domain.  However, we immediately notice that Definition~\ref{SobolveFourier} does not work for bounded space because that the Fourier transform is involved. 
    For bounded domains, we adopt the fractional Sobolev space through the Slobodeckij semi-norm. 
    \begin{defi} \label{FracSobolev}
    Let $s\in (0,1)$, $\Omega \subset \mathbb{R}^3 $ open. 
    We say $f(x,v)\in L^2_v(\mathbb{R}^3;H^s_x(\Omega))$ if  $f\in L^2(\O\times\R^3)$ and
    \begin{equation}
    \int_{\mathbb{R}^3}\int_\Omega\int_\Omega\frac{|f(x,v)-f(y,v)|^2}{|x-y|^{3+2s}}\,dxdydv<\infty,
    \end{equation}
    with
    \begin{equation}
        \| f \|_{L^2_v(\mathbb{R}^3;H^s_x(\Omega))} =\left( \| f\|^2_{L^2(\O\times\R^3)}+\int_{\mathbb{R}^3}\int_\Omega\int_\Omega\frac{|f(x,v)-f(y,v)|^2}{|x-y|^{3+2s}}\,dxdydv \right)^\frac{1}{2}.
    \end{equation}
    \end{defi}
    \noindent Notice that Definition~\ref{SobolveFourier} and Definition~\ref{FracSobolev} of fractional Sobolev spaces are equivalent on the whole space. In other 
    words, for $0<s<1$, there exist two positive constants $C_1=C_1(s)$ and $C_2=C_2(s)$ such that 
    \begin{equation} \label{eq:SobolevEquivalence}
        C_1 \| u \|_{L^2_v(\mathbb{R}^3;H^s_x(\R^3))} \leq \|u\|_{L^2_v(\R^3;\tilde{H}^s_x(\R^3))}  \leq C_2  \| u \|_{L^2_v(\mathbb{R}^3;H^s_x(\R^3))} 
    \end{equation}
    for any $u\in L^2_v(\R^3;\tilde{H}^s_x(\R^3)) $.

    Here, we shall first introduce our main result and then explain the multiple obstacles we encounter and how we overcome them. 
    We consider a bounded convex domain which satisfies the  following assumption.
    
    \begin{defi} \label{defi:PositiveCurvature}
        We say a $C^2$ bounded convex domain $\Omega$ in $\R^3$ satisfies the positive curvature condition if 
        $\partial \Omega$ is of positive Gaussian curvature.
    \end{defi}
    \begin{rmk}
        Positive curvature condition implies uniform convexity, which would also imply strict convexity. If the domain is compact, 
        then its being strictly convex is equivalent to being uniformly convex. On the contrary, a uniformly convex domain does 
        not necessarily satisfy the positive curvature condition.
    \end{rmk}
    We consider the incoming boundary value problem for linearized Boltzmann equation in $\Omega$,
    
    \begin{equation} \label{eq:SLBwithBD1}
        \left\{ 
        \begin{aligned}
            v\cdot\nabla_x f(x,v) &=L(f)(x,v),&\text{for }&x\in\Omega,\, v\in\R^3, \\
            f|_{\Gamma_-}(q,v)&=g(q,v),&\text{for }&(q,v)\in\Gamma_-,
        \end{aligned}\right.
    \end{equation}
    where 
    \begin{equation*} 
    \Gamma_-:=\{(q,v)\in\partial\Omega\times\R^3:\, n(q)\cdot v<0\},  
    \end{equation*}
    and $n(q)$ is the unit outward normal of $\partial\O$ at $q$. 
    In this context, $L$ satisfies one of hard sphere, cutoff hard, and cutoff Maxwellian potentials. 
    The detailed assumption on $L$ will be addressed in Section~\ref{sec:CollisionOperator}.
    
    Regarding the  existence result of boundary value problem \eqref{eq:SLBwithBD1},  it has been studied by Guiraud \cite{guiraud70} for convex domains and by Esposito, Guo, Kim, and  Marra \cite{esposito13}  for  general domains. 
    In the paper of Esposito, Guo, Kim, and  Marra \cite{esposito13}, they proved the solution is continuous away from the grazing set.  With stronger assumption on cross-section  $B$, namely,
    
    \begin{equation} \begin{split}\label{saparableB}
         &B(|v-v_*|,\theta)= |v-v_*|^\gamma \beta(\theta),\\&0\leq\beta(\theta)\leq C\sin\theta\cos\theta,\end{split}
    \end{equation}
    the interior H\"{o}lder estimate was established in \cite{chen18}  and later improved to interior pointwise estimate for first derivatives \cite{chen19}. Recently, the nonlinear case was established in \cite{chen20} for hard sphere potential.  Notice that, in \cite{chen18, chen19,chen20}, the  fact  $K$ improves regularity in velocity is a key property used. The idea is to move the regularity in velocity to space through transport and collision. This idea was inspired by the mixture lemma by Liu and Yu \cite{liu04}.  In contrast, in the present result, we do not need the smoothing effect of $K$ in velocity; the integral operator $K$ itself  provides "velocity averaging"  and therefore regularity. 
    Regarding regularity issues for the time dependent Boltzmann equation, we refer the interested readers to \cite{guo16,guo17}.

    In this article, we assume the following two conditions on the incoming data $g$.
    \begin{assump}\label{assump:incoming}
    There are positive constants $a,C$ such that 
        \begin{equation} \label{eq:condition2}
         |g(q_1,v)|\leq C\, e^{-a|v|^2}
        \end{equation}
        and
        \begin{equation}\label{eq:condition3}
        |g(q_1,v)-g(q_2,v)|\leq C\, |q_1-q_2|, 
        \end{equation}
    for any $(q_1,v)\in\Gamma_-$ and $(q_2,v)\in\Gamma_-$.
    \end{assump}
    The main result of this paper is as follows.
    
    \begin{thm} \label{thm}
        Let bounded convex domain $\O \subset  \R^3$ satisfy the positive curvature condition in Definition~\ref{defi:PositiveCurvature} and linearized collision operator $L$ 
        satisfy angular cutoff assumption \eqref{eq:cutoff}. Suppose the incoming data $g$ satisfies Assumption~\ref{assump:incoming}. Then, for any solution $f\in L^2(\O\times \R^3)$
        to stationary linearized Boltzmann equation \eqref{eq:SLBwithBD1}, we have  
        \begin{equation}
        f \in L^2_v(\mathbb{R}^3;{H}^{1-\epsilon}_x(\O)),
        \end{equation}
        for any $0<\epsilon<1$.
    \end{thm}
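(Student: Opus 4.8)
The strategy is to pass, via the method of characteristics, to an integral equation, to iterate it three times, and to estimate the four resulting terms separately. For $x\in\O$ and $v\in\R^3\setminus\{0\}$ let $\tau_-(x,v)>0$ be the backward exit time and $q_-(x,v):=x-\tau_-(x,v)v\in\partial\O$ the backward exit position; convexity makes these well defined, and the positive curvature condition will let us compare the geometric quantities built from them with the corresponding quantities for inscribed and circumscribed spheres. Any $L^2$ solution of \eqref{eq:SLBwithBD1} satisfies the Duhamel identity
\begin{equation*}
  f(x,v)=\underbrace{e^{-\nu(v)\tau_-(x,v)}\,g\big(q_-(x,v),v\big)}_{=:J_g(x,v)}+\SO\big(Kf\big)(x,v),\qquad
  \SO(h)(x,v):=\int_0^{\tau_-(x,v)}e^{-\nu(v)t}h(x-vt,v)\,dt .
\end{equation*}
Substituting this identity into itself three times yields
\begin{equation*}
  f=J_g+\SO K J_g+(\SO K)^2 J_g+(\SO K)^3 f,
\end{equation*}
so it suffices to place each of the four terms in $L^2_v(\R^3;H^{1-\epsilon}_x(\O))$.

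I would first treat the boundary term $J_g$. The key point is the geometric estimate that $\int_\O|\nabla_x q_-(x,v)|^2\,dx$ (and the analogous integral involving $\tau_-$) is bounded uniformly in $v/|v|$: on a ball this is an explicit computation, and for a general $\O$ meeting the positive curvature condition one transfers the bound by comparison with inscribed and circumscribed spheres, using the changes of variables of Lemma~\ref{lemma:ChangeOfVariable1} and Lemma~\ref{lemma:ChangeOfVariable2}. Combined with \eqref{eq:condition3} this gives $\|J_g(\cdot,v)\|_{H^1_x(\O)}\lesssim 1$ uniformly in $v$, while \eqref{eq:condition2} gives $\|J_g(\cdot,v)\|_{L^2_x(\O)}\lesssim e^{-a|v|^2}$; interpolating and integrating over $v$ then yields $J_g\in L^2_v(\R^3;H^s_x(\O))$ for every $s<1$. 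One checks, again via the change-of-variable estimates, that $\SO$ is bounded on $L^2_v(\R^3;H^s_x(\O))$ for $0\le s<1$, and $K$ is bounded on $L^2_v(\R^3;H^s_x(\O))$ for every $s$ because it acts only in $v$; hence $\SO K J_g$ and $(\SO K)^2 J_g$ also lie in $L^2_v(\R^3;H^{1-\epsilon}_x(\O))$.

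The crux is the remainder $(\SO K)^3 f$. Since $Kf\in L^2(\O\times\R^3)$, write $(\SO K)^3=\SO\,(K\SO)\,(K\SO)\,K$ and note that each block $K\SO$ is an instance of velocity averaging inside the bounded domain: $K\SO h(x,v)=\int_{\R^3}k(v,v_*)\,w(x,v_*)\,dv_*$, where $w:=\SO h$ solves $\nu(v_*)w+v_*\cdot\nabla_x w=h$ with zero incoming data, so that $K\SO h$ averages $w$ over its transport variable $v_*$. Working directly with the Slobodeckij seminorm of Definition~\ref{FracSobolev}---which, unlike the Fourier-analytic argument behind Lemma~\ref{lemma:ksk}, does not introduce the spurious boundary jump produced by the zero-extension device of \cite{golse88}---I would establish the graded smoothing estimate
\begin{equation*}
  K\SO:\ L^2_v\big(\R^3;H^s_x(\O)\big)\longrightarrow L^2_v\big(\R^3;H^{\min(s+\frac12,\,1)-\epsilon}_x(\O)\big),\qquad s\ge 0,
\end{equation*}
valid for every $\epsilon>0$. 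Applying it twice---first with $s=0$, then with $s=\tfrac12-\epsilon$---and using that the trailing $K$ and leading $\SO$ are bounded on the relevant spaces, one gets $(\SO K)^3 f\in L^2_v(\R^3;H^{1-\epsilon}_x(\O))$. This is exactly why three iterations are needed and sufficient: the chain $(\SO K)^3$ houses two averaging blocks $K\SO$, each good for a gain of almost $\tfrac12$, which together reach the barrier $1^-$.

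The main obstacle is the smoothing estimate of the previous paragraph. Expanding the Slobodeckij seminorm of $K\SO h$ and interchanging the order of integration produces a singular integral in which the kinematic weight $|x-y|^{-(3+2s')}$ must be integrated along backward characteristics in $\O$; the difficulty concentrates near the grazing set, where $\tau_-$ and $q_-$ are not Lipschitz. To handle it I would change variables so that the transport direction becomes a coordinate axis (Lemma~\ref{lemma:ChangeOfVariable1} and Lemma~\ref{lemma:ChangeOfVariable2}), thereby reducing the integral over the convex domain to one that can be dominated by the corresponding integral over a spherical domain, where the positive curvature condition furnishes the necessary comparison; the cutoff and decay properties of the collision kernel $k(v,v_*)$ and of $\nu$ then guarantee convergence of the velocity integrations. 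With this estimate and the auxiliary mapping properties of $\SO$ and $K$ in hand, reassembling the four terms finishes the proof.
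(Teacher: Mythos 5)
Your high-level strategy---Duhamel, iterate, estimate each term via the Slobodeckij seminorm and the change-of-variable lemmas---matches the paper's, but two of your central intermediate claims are unproved and carry essentially all of the work. The first is the graded averaging estimate $K\SO: L^2_v(H^s_x(\O))\to L^2_v(H^{\min(s+\frac12,1)-\epsilon}_x(\O))$. For $s=0$ this reduces to the whole-space averaging lemma through the identity $\left.(KS\widetilde{h})\right|_\O = K\SO h$, but for $s\ge\tfrac12$ you hit exactly the obstruction the paper works hardest on: the zero extension $Z:H^s(\O)\to H^s(\R^3)$ is unbounded for $s\ge\tfrac12$ (nontrivial boundary trace), and any other extension operator destroys that identity. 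The paper's Lemma~\ref{lemma:soksokextension} proves a zero-extension bound only for the \emph{specific composite} $\SO K\SO K f$, using the near-boundary decay obtained from Proposition~\ref{prop:SKsquare} and Lemma~\ref{lemma:FracIntegralOverChord}, and explicitly remarks afterward that the same device fails one step further. Saying you will ``work directly with the Slobodeckij seminorm'' to avoid the zero-extension issue does not supply the estimate; the singular integral you would have to bound for a general $h\in H^s(\O)$ is precisely what Lemmas~\ref{lemma:soksok} and~\ref{lemma:soksokextension} spend two pages controlling for the very particular function $\SO K\SO K f$. Note also that the paper's building block is $K\SO K$ (Lemma~\ref{lemma:ksk} and Corollary~\ref{cor:ksok}), not $K\SO$, and its remainder is $f_4=(\SO K)^4 f$, not $(\SO K)^3 f$; so even the bookkeeping of your iteration differs from the paper's and rests on the unproved sharper averaging bound.

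The second gap is the final application of $\SO$: your decomposition $\SO(K\SO)(K\SO)K$ needs $\SO$ to be bounded on $L^2_v(H^{1-\epsilon}_x(\O))$, and that is not available. The paper's Lemma~\ref{lemma:SOPreservation} proves $\SO$-preservation only under the pointwise Gaussian hypothesis $|h(x,v)|\le Ce^{-a|v|^2}$, which holds for the data terms $g_i=(\SO K)^i Jg$ (via Assumption~\ref{assump:incoming} and Lemma~\ref{lemma:MaxwellianBound}) but not for $(K\SO)^2 Kf$ with generic $f\in L^2(\O\times\R^3)$. Without such decay, the boundary-jump piece $\int_{\tau_-(x,v)}^{\tau_-(y,v)} e^{-\nu(v)s}h(y-sv,v)\,ds$ of the Slobodeckij quotient cannot be summed over $v$. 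The paper therefore does \emph{not} invoke $\SO$-preservation for the remainder: its final lemma estimates the Slobodeckij seminorm of $(\SO K)^4 f$ directly, splitting into a bulk term handled by Corollary~\ref{cor:ksoksoksok} and a boundary term handled by Propositions~\ref{prop:ChordBound} and~\ref{prop:SKsquare} together with both change-of-variable lemmas. Finally, your claim $\|Jg(\cdot,v)\|_{H^1_x(\O)}\lesssim 1$ uniformly in $v$ is stronger than what the paper proves (only $H^{1-\epsilon}$, with constants that degenerate as $\epsilon\to0$; see Corollary~\ref{cor:DistanceIntegral} and the $|\log d_x|/d_x^{1-\epsilon}$ factor in the proof of Lemma~\ref{lemma:jg}), and the factor $\nu(v)\nabla_x\tau_-$ in $\nabla_x Jg$ already spoils uniformity in $v$ near $v=0$; the paper obtains the gain by a careful splitting of exponents inside the Slobodeckij quotient, not by interpolating against an $H^1$ bound.
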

    
    We shall sketch the proof and reveal the difficulties induced by geometry and the method we tackle the problem. 
    \begin{defi} \label{def:ExitTime}
        Let $x\in \Omega$ and $v\in \mathbb{R}^3$. We define
    \begin{align*}
        &\tau_-(x,v):=\inf_{t>0} \{ t:\,x-vt \notin \Omega\},\\
        &q_-(x,v):=x-\tau_-(x,v)v, \\
        &\tau_+(x,v):=\inf_{t>0} \{ t:\,x+vt \notin \Omega\},\\
        &q_+(x,v):=x+\tau_+(x,v)v.
    \end{align*}
    \end{defi}
    \noindent With the notations of Definition~\ref{def:ExitTime},  one can rewrite \eqref{eq:SLBwithBD1} as
      the integral equation
    
    \begin{equation}\begin{split}
    f(x,v)=&e^{-\nu(v) \tau_-(x,v)}g(q_-(x,v), v) \\&+\int_0^{\tau_-(x,v)}e^{-\nu(v)s}K(f)(x-sv,v)\,ds.
    \end{split}\end{equation}
    Hereafter, we define 
    \begin{align}
        (Jg)(x,v)&:=e^{-\nu(v)\tau_-(x,v)}g(q_-(x,v),v), \label{eq:jgdefinition} \\
        (S_\Omega f)(x,v)&:=\int^{\tau_-(x,v)}_0 e^{-\nu(v)s}f(x-sv,v)ds.
    \end{align}
    Notice that $\SO:L^p(\O\times\R^3)\to L^p(\O\times\R^3)$ and $J:L^p(\Gamma_-;d\sigma)\to L^p(\O\times\R^3)$ are bounded for $1\leq p \leq \infty$
    with
    \[
        d\sigma = |v\cdot n(q)|\,d\Sigma(q)dv,
    \]
    where $\Sigma(q)$ is the surface element on $\partial\O$ at $q$. Performing Picard iteration, we have
    \begin{equation}\begin{split}
    f(x,v)=&J(g)+S_{\Omega}K(f)\\=&J(g)+S_{\Omega}KJ(g)+S_{\Omega}KS_{\Omega}K(f)\\=&J(g)+S_{\Omega}KJ(g)+S_{\Omega}KS_{\Omega}KJ(g)+S_{\Omega}KS_{\Omega}KS_{\Omega}K(f)\\=&J(g)+S_{\Omega}KJ(g)+S_{\Omega}KS_{\Omega}KJ(g)+S_{\Omega}KS_{\Omega}KS_{\Omega}KJ(g)\\&+S_{\Omega}KS_{\Omega}KS_{\Omega}KS_{\Omega}K(f)\\= &g_0+g_1+g_2+g_3+f_4,
    \end{split}\end{equation}
    where
    \begin{align}
    g_i&:=(S_\Omega K)^iJ(g), \label{eq:giDef}\\
    f_i&:=(S_\Omega K)^i(f).  \label{eq:fiDef}
    \end{align}
    
    We observe that each $g_i$ is directly under influence of boundary data and the geometry of the domain. Our strategy is to prove $ g_i\in L^2_v(\mathbb{R}^3;{H}^{1-\epsilon}_x(\mathbb{R}^3))$. And, concerning the remaining term $f_4$,  we shall match up the regularity of boundary terms. 
    
    We point out the difference between the cases for the whole space and a bounded domain. 
    Let $h$ be any measurable function defined in $\O\times\R^3$. We use the notation $\widetilde{h}$ to denote its zero extension in 
$\R^3\times\R^3$. And, let $Z:\O\times\R^3\to \R^3\times\R^3$ be the zero extension operator from $\O\times\R^3$ to $\R^3\times\R^3$, namely, 
\begin{equation} \label{eq:ZeroExtension}
    (Zh)(x,v)=\widetilde{h}(x,v)=
    \begin{cases}
        h(x,v),&\quad\text{if }x\in\O,\\
        0,&\quad\text{otherwise.}
    \end{cases}
\end{equation}
    Suppose $f\in L^2(\O\times\R^3)$. Notice that 
    \begin{equation} \label{1iteration}
    \left. SK(\widetilde{f})\right|_{\Omega}=S_{\Omega}K(f). 
    \end{equation}
    Therefore, applying Lemma~\ref{lemma:ksk}, we have
    \begin{cor} \label{cor:ksok}
        The operator $K\SO K:L^2(\O\times\R^3)\to L^2_v(\R^3;H^{\sfrac 1 2 }_x(\O))$ is bounded.
    \end{cor}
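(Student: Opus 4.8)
The plan is to derive Corollary~\ref{cor:ksok} from Lemma~\ref{lemma:ksk} by pushing everything through the zero extension operator $Z$ and using the restriction identity \eqref{1iteration}. First I would let $f\in L^2(\O\times\R^3)$ and set $\widetilde{f}=Zf\in L^2(\R^3\times\R^3)$, noting $\|\widetilde{f}\|_{L^2(\R^3\times\R^3)}=\|f\|_{L^2(\O\times\R^3)}$. Since $L^2(\R^3\times\R^3)=L^2_v(\R^3;\tilde H^0_x(\R^3))=L^2_v(\R^3;H^0_x(\R^3))$ (the $s=0$ case, where the two definitions trivially agree), Lemma~\ref{lemma:ksk} applied with $s=0$ gives $KSK(\widetilde{f})\in L^2_v(\R^3;\tilde H^{1/2}_x(\R^3))$ with
\[
\|KSK(\widetilde{f})\|_{L^2_v(\R^3;\tilde H^{1/2}_x(\R^3))}\leq C\,\|\widetilde{f}\|_{L^2(\R^3\times\R^3)}=C\,\|f\|_{L^2(\O\times\R^3)}.
\]

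Next I would invoke the norm equivalence \eqref{eq:SobolevEquivalence} with $s=1/2$ to convert the Fourier-based norm into the Slobodeckij norm, obtaining $KSK(\widetilde{f})\in L^2_v(\R^3;H^{1/2}_x(\R^3))$ with a comparable bound. The key point is then that restriction from $\R^3$ to the subdomain $\O$ is bounded on these spaces: for the Slobodeckij semi-norm this is immediate because $\int_\O\int_\O$ is a subintegral of $\int_{\R^3}\int_{\R^3}$ of the same nonnegative integrand, and likewise $\|h\|_{L^2(\O\times\R^3)}\leq\|h\|_{L^2(\R^3\times\R^3)}$. Hence
\[
\|KSK(\widetilde{f})\|_{L^2_v(\R^3;H^{1/2}_x(\O))}\leq \|KSK(\widetilde{f})\|_{L^2_v(\R^3;H^{1/2}_x(\R^3))}.
\]
Finally, applying \eqref{1iteration} in the form $KSK(\widetilde{f})|_\O = K S_\O K(f)$ (the operator $K$ acts only in $v$, so it commutes with restriction in $x$, and $SK(\widetilde f)|_\O = S_\O K(f)$ by \eqref{1iteration}), I conclude $K\SO K(f)\in L^2_v(\R^3;H^{1/2}_x(\O))$ with
\[
\|K\SO K(f)\|_{L^2_v(\R^3;H^{1/2}_x(\O))}\leq C\,\|f\|_{L^2(\O\times\R^3)},
\]
which is the asserted boundedness.

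The only subtlety worth spelling out — and the one place a careful reader would want a line of justification — is the interchange $KSK(\widetilde f)|_\O = K\SO K(f)$. One should observe that $K$ is an integral operator purely in the velocity variable, so $\big(K h\big)(x,v)$ depends on $x$ only through $h(x,\cdot)$; thus $\big(K h\big)|_{\O} = K\big(h|_\O\big)$ for any $h$, and in particular $\big(KSK(\widetilde f)\big)|_\O = K\big(SK(\widetilde f)|_\O\big) = K\big(S_\O K(f)\big)$ by \eqref{1iteration}, where on the right $K(f)$ is understood as first extending $K(f)$ by zero (equivalently $K(\widetilde f) = \widetilde{K(f)}$, again because $K$ acts only in $v$). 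This step is routine rather than a genuine obstacle; the corollary is essentially a bookkeeping consequence of Lemma~\ref{lemma:ksk}, the $s=0$ norm identification, the equivalence \eqref{eq:SobolevEquivalence}, and the triviality that restriction to a subdomain does not increase either the $L^2$ norm or the Slobodeckij semi-norm.
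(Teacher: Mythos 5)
Your proposal is correct and follows essentially the same route as the paper: zero-extend $f$, apply Lemma~\ref{lemma:ksk} with $s=0$, convert norms via \eqref{eq:SobolevEquivalence}, and restrict back to $\O$ using \eqref{1iteration} together with the trivial monotonicity of the Slobodeckij and $L^2$ norms under restriction. The only difference is that you spell out the commutation of $K$ with restriction in $x$ (since $K$ is an integral operator purely in $v$), which the paper states without comment; that is a harmless and correct elaboration, not a different argument.
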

    
    \begin{rmk}
        Corollary~\ref{cor:ksok} can be viewed as a variant of Theorem~4 in \cite{golse88}. For the latter, the transport equation under consideration is, for given $h\in L^2(\O\times\R^3)$, 
        \begin{equation} 
                u(x,v) + v\cdot\nabla_x u(x,v) =h(x,v),\, \text{for }x\in\Omega,\, v\in\R^3.
        \end{equation}
        On the other hand, the transport equation we consider here is, for given $h\in L^2(\O\times\R^3)$,
        \begin{equation} 
            \nu(v)u(x,v) + v\cdot\nabla_x u(x,v) =Kh(x,v),\, \text{for }x\in\Omega,\, v\in\R^3.
        \end{equation}
    \end{rmk}
    \noindent However, if we want to further iterate, we have to take the geometric structure into consideration. 
    As mentioned earlier, the method of Fourier transform does not apply to bounded domains. 
    In this article, we adopt Slobodeckij semi-norm as an alternative concept of Sobolev function class. 
    As a result, we have 
    
    \begin{lem} \label{lemma:soksok}
        The operator $\SO K\SO K:L^2(\O\times\R^3)\to L^2_v(\R^3;H^{\sfrac 1 2}_x(\O))$ is bounded.
    \end{lem}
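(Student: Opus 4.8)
The plan is to exploit the factorization $\SO K\SO K=\SO\circ(K\SO K)$ and reduce the claim to the action of the single streaming operator $\SO$ on a function that is already known to be regular. By Corollary~\ref{cor:ksok}, for $f\in L^2(\O\times\R^3)$ the function $h:=K\SO Kf$ belongs to $L^2_v(\R^3;H^{\sfrac12}_x(\O))$ with $\|h\|_{L^2_v(\R^3;H^{\sfrac12}_x(\O))}\le C\|f\|_{L^2(\O\times\R^3)}$; moreover, writing $h=K(\SO Kf)$ with $\SO Kf\in L^2(\O\times\R^3)$ and using the integrability of the kernel $k$ guaranteed by the angular cutoff assumption (which yields $\sup_v\|k(v,\cdot)\|_{L^2_v}<\infty$), one obtains in addition a pointwise bound $|h(x,v)|\le C\,\psi(x)$ with $\psi\in L^2(\O)$ and $\|\psi\|_{L^2(\O)}\le C\|f\|_{L^2(\O\times\R^3)}$. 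Since $\SO$ is bounded on $L^2(\O\times\R^3)$ (and $\SO h=S(Zh)|_\O$ by the same reasoning as in \eqref{1iteration}), the $L^2$-part of the target norm is immediate, so the entire content of the lemma is the bound
\[
    \int_{\R^3}\int_\O\int_\O\frac{|\SO h(x,v)-\SO h(y,v)|^2}{|x-y|^{4}}\,dx\,dy\,dv\le C\|f\|_{L^2(\O\times\R^3)}^2 .
\]

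Fix $v$ and $x,y\in\O$ with, say, $\tau_-(x,v)\le\tau_-(y,v)$ (the opposite case being symmetric). By Definition~\ref{def:ExitTime} and convexity the points $x-sv$ ($0\le s\le\tau_-(x,v)$) and $y-sv$ ($0\le s\le\tau_-(y,v)$) all lie in $\overline\O$, and
\[
    \SO h(x,v)-\SO h(y,v)=\underbrace{\int_0^{\tau_-(x,v)}e^{-\nu(v)s}\big(h(x-sv,v)-h(y-sv,v)\big)\,ds}_{=:A}-\underbrace{\int_{\tau_-(x,v)}^{\tau_-(y,v)}e^{-\nu(v)s}\,h(y-sv,v)\,ds}_{=:B}.
\]
For the transport term $A$ I would use Cauchy--Schwarz in $s$ and then, for each frozen $s$, the measure-preserving substitution $x\mapsto x-sv,\ y\mapsto y-sv$; this leaves $|x-y|$ unchanged and, by convexity, maps the admissible set into $\O\times\O$, so that, using $\nu\ge\nu_0>0$, the $A$-contribution is dominated by $\nu_0^{-2}\|h\|^2_{L^2_v(\R^3;H^{\sfrac12}_x(\O))}\le C\|f\|^2_{L^2(\O\times\R^3)}$. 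This step is routine and uses nothing about the geometry of $\partial\O$.

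The boundary-layer term $B$ is the crux, and it is here that the geometry enters. This term measures the mismatch $|\tau_-(x,v)-\tau_-(y,v)|$ of exit times and cannot be absorbed using $h\in L^2_v(\R^3;H^{\sfrac12}_x(\O))$ alone: on a bounded domain it is exactly the manifestation of the failure of the zero-extension operator to be bounded from $H^{\sfrac12}(\O)$ into $H^{\sfrac12}(\R^3)$ at the threshold exponent $\sfrac12$, which is the very obstruction that prevents a naive iteration of the whole-space argument. The plan to control it is to combine three ingredients: (i) a quantitative modulus-of-continuity estimate for $x\mapsto\tau_-(x,v)$, obtained by comparing $\O$ from the inside and from the outside with the spheres osculating $\partial\O$ at $q_-(x,v)$ and $q_-(y,v)$ — this is where the positive Gaussian curvature of Definition~\ref{defi:PositiveCurvature} is used, and it bounds $|\tau_-(x,v)-\tau_-(y,v)|$ in terms of $|x-y|$ with a rate that degenerates only near the grazing set; (ii) the change-of-variable Lemmas~\ref{lemma:ChangeOfVariable1} and \ref{lemma:ChangeOfVariable2}, which transform the integral of $h$ along the short segment ending at $q_-(y,v)$ into boundary coordinates in which the resulting singular integral is tractable; and (iii) the pointwise bound $|h(x,v)|\le C\psi(x)$, $\psi\in L^2(\O)$, which converts the $h$-dependence into an integral of $\psi^2$ that integrates cleanly in $(x,y,v)$ after the change of variables. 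I expect the genuine difficulty to lie precisely in the interaction, in this singular integral, between the degeneracy of $\tau_-$ at the grazing set and the near-diagonal singularity $|x-y|^{-4}$ of the Slobodeckij kernel: a crude absolute-value estimate of $B$ is not summable near the diagonal, so one must retain the cancellation encoded in the exit-time difference and estimate the $x$-integral by splitting into the region where the sphere-comparison bound is effective and its complement, the residual (logarithmic) growth in the grazing parameter being absorbed upon integrating in $(y,v)$. Assembling the estimates for $A$ and $B$ and integrating in $v$ then yields the asserted boundedness of $\SO K\SO K$.
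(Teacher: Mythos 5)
Your decomposition of $\SO h(x,v)-\SO h(y,v)$ into a transport term $A$ and a boundary-layer term $B$ is exactly the split the paper uses, and your treatment of $A$ (Cauchy--Schwarz in $s$, the Sobolev extension of $h$ off $\O$, and the measure-preserving shift $x\mapsto x-sv$, $y\mapsto y-sv$) coincides with the paper's estimate of the corresponding $I_1$. You are also right that $B$ is where the geometry must enter, and the ingredients you list — a modulus-of-continuity estimate for $\tau_-$ via sphere comparisons (ultimately Propositions~\ref{prop:ProjDistance2} and~\ref{prop:ChordBound}) and the change-of-variable Lemmas~\ref{lemma:ChangeOfVariable1},~\ref{lemma:ChangeOfVariable2} — are indeed the ones the paper deploys.

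However, your plan for closing the $B$ estimate through the crude pointwise bound $|h(x,v)|\le C\psi(x)$ with $\psi\in L^2(\O)$ does not work, and this is a genuine gap. Carry the computation through: after Cauchy--Schwarz (weighted by $e^{-\nu(v)s}$) and the interpolation $\int^{\tau_-(y,v)}_{\tau_-(x,v)}e^{-\nu(v)s}\,ds\le C\,|x-y|^{1-\epsilon}/(N_-(x,v)^{1-\epsilon}|v|^{1-\epsilon})$, then the change of variable $r=s|v|$ followed by Lemma~\ref{lemma:ChangeOfVariable2}, Proposition~\ref{prop:ChordBound}, and the inclusion $\O_{v,y'}\subset \R^3\setminus B_{d_{y'}}(y')$, you arrive at a bound of the form
\begin{equation*}
    I_2\ \lesssim\ \int_{\R^3}\int_{\O}\frac{\psi(y')^2}{|v|^{2-\epsilon}\,d_{y'}^{\epsilon}}\,dy'\,dv.
\end{equation*}
Once the $v$-dependence of $h$ has been replaced by the $v$-independent $\psi$, the $v$-integral diverges: $\int_{\R^3}|v|^{-(2-\epsilon)}\,dv$ grows like $\int^\infty \rho^{\epsilon}\,d\rho$. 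Even tracking the kernel decay $\|k(v,\cdot)\|_{L^2_{v_*}}\lesssim(1+|v|)^{-(3-2\gamma)/2}$ does not save you in the hard-sphere case $\gamma=1$. Independently, $\int_\O\psi(y')^2\,d_{y'}^{-\epsilon}\,dy'$ need not be finite for $\psi$ merely in $L^2(\O)$: the weight $d_{y'}^{-\epsilon}$ is not in $L^\infty$ and you have no higher integrability for $\psi$.

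What the paper does instead is precisely \emph{not} to flatten $Kf_1$ into a $v$-independent $\psi$. It retains $|Kf_1(y',v)|^2$ and, at the critical step, invokes Proposition~\ref{prop:SKsquare} to unpack one more layer ($Kf_1=K\SO Kf$), producing both a kernel factor $|k(v,v_*)|$ and an integral along the new direction $\hat v_*$. The kernel factor yields convergence of the $v$-integral through Proposition~\ref{prop:1overvk} ($\int|k(v,v_*)|\,|v|^{-(2-\epsilon)}\,dv\le C$), and the boundary singularity $d_{y'}^{-\epsilon}$ is then absorbed by a second application of Lemma~\ref{lemma:ChangeOfVariable1} together with the chord-integral estimate \eqref{eq:FracIntegralOverChord} of Lemma~\ref{lemma:FracIntegralOverChord}, which integrates $d^{-\epsilon}$ along the $\hat v_*$-chord to a finite constant. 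Your proposal discards exactly the structure ($v$-decay of the kernel and a second direction to integrate along) that those steps rely on, so as written the boundary-layer estimate does not close.
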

    
    \noindent Therefore, if we end iteration at $f_2$, we can already claim $f\in L^2_v(\mathbb{R}^3; H^{\sfrac12}_x(\Omega))$.  
    
    Considering piling up the regularity, we notice that 
    \begin{equation}
    \left. SKSK(\widetilde{f})\right|_{\Omega}\neq S_{\Omega}KS_{\Omega}K(f).
    \end{equation} 
    It seemingly comes to the limit of this strategy. 
    Surprisingly, we have  
    \begin{lem} \label{lemma:soksokextension}
        $Z\SO K\SO K:L^2(\O \times \R^3)\to L^2_v(\R^3;H^{\frac{1}{2}-\epsilon}_x(\R^3))$ is bounded for any 
        $\epsilon\in (0,\frac 1 2)$. Furthermore, there is a constant $C$ independent of $\epsilon$ and $f$ such that 
        \begin{equation}
            \| \widetilde{\SO K \SO Kf}\|_{L^2_v(\R^3;H^{\frac{1}{2}-\epsilon}_x(\R^3))} \leq \frac{C}{\sqrt{\epsilon}}\, \| f \|_{ L^2(\O\times \R^3)}.
        \end{equation}
    \end{lem}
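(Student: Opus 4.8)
The plan is to keep the interior regularity already provided by Lemma~\ref{lemma:soksok} and to pay for crossing $\partial\Omega$ by zero extension; since $s=\sfrac12$ is the borderline exponent for extension by zero on a bounded domain, both the loss of $\epsilon$ derivatives and the constant $\epsilon^{-\sfrac12}$ should fall out of a fractional Hardy inequality on $\Omega$. Concretely, set $u:=\SO K\SO K f$. By Lemma~\ref{lemma:soksok}, for a.e.\ $v\in\R^3$ one has $u(\cdot,v)\in H^{\sfrac12}_x(\Omega)$ with $\int_{\R^3}\|u(\cdot,v)\|_{H^{\sfrac12}_x(\Omega)}^2\,dv\le C\,\|f\|_{L^2(\Omega\times\R^3)}^2$, so it suffices to prove the deterministic estimate
\[
\|\widetilde w\|_{H^{\sfrac12-\epsilon}(\R^3)}\le \frac{C(\Omega)}{\sqrt\epsilon}\,\|w\|_{H^{\sfrac12}(\Omega)}\qquad\text{for all }w\in H^{\sfrac12}(\Omega),\ \epsilon\in(0,\tfrac12),
\]
and then square, substitute $w=u(\cdot,v)$, and integrate in $v$.

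\textbf{Splitting the Slobodeckij integral.} Write $s=\sfrac12-\epsilon$. Since $\widetilde w$ vanishes off $\Omega$, the double integral in $[\widetilde w]_{H^s(\R^3)}^2$ splits into the $\Omega\times\Omega$ piece, which equals $[w]_{H^s(\Omega)}^2$, the piece over $(\R^3\setminus\Omega)\times(\R^3\setminus\Omega)$, which is $0$, and $2\int_\Omega|w(x)|^2\big(\int_{\R^3\setminus\Omega}|x-y|^{-3-2s}\,dy\big)dx$. On $\Omega\times\Omega$ we have $|x-y|\le\diam(\Omega)$, hence $|x-y|^{-3-2s}=|x-y|^{2\epsilon}|x-y|^{-4}\le C(\Omega)\,|x-y|^{-4}$ and so $[w]_{H^s(\Omega)}^2\le C(\Omega)\,[w]_{H^{\sfrac12}(\Omega)}^2$ uniformly in $\epsilon$. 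For the cross piece, $\R^3\setminus\Omega\subseteq\R^3\setminus B\!\big(x,\dist(x,\partial\Omega)\big)$ gives $\int_{\R^3\setminus\Omega}|x-y|^{-3-2s}\,dy\le\tfrac{C}{s}\,\dist(x,\partial\Omega)^{-2s}$, so the cross piece is at most $\tfrac{C}{s}\int_\Omega\dist(x,\partial\Omega)^{-2s}\,|w(x)|^2\,dx$.

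\textbf{The fractional Hardy inequality (the hard step).} What remains is precisely the weighted integral in the fractional Hardy inequality on the bounded $C^2$ (hence Lipschitz) domain $\Omega$:
\[
\int_\Omega\frac{|w(x)|^2}{\dist(x,\partial\Omega)^{2s}}\,dx\;\le\;\frac{C(\Omega)}{1-2s}\,\|w\|_{H^s(\Omega)}^2,\qquad 0<s<\tfrac12.
\]
This is the step I expect to be the main obstacle, and it is where the second factor $\epsilon^{-\sfrac12}$ comes from, since $1-2s=2\epsilon$. I would prove it by a finite partition of unity subordinate to a cover of $\partial\Omega$ by graph neighbourhoods, a bi‑Lipschitz straightening of each patch in which $\dist(\cdot,\partial\Omega)$ becomes comparable to the new vertical coordinate, reduction to the one‑dimensional Hardy inequality $\int_0^\infty t^{-2s}|\phi(t)|^2\,dt\le\tfrac{C}{1-2s}\int_0^\infty\int_0^\infty|t-\tau|^{-1-2s}|\phi(t)-\phi(\tau)|^2\,dt\,d\tau$, and bookkeeping of the constant $\tfrac1{1-2s}$ across the change of variables; away from $\partial\Omega$ the weight is bounded and contributes only $C(\Omega)\|w\|_{L^2(\Omega)}^2$. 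Alternatively one may quote the fractional Hardy inequality for Lipschitz domains, whose sharp constant is known to blow up like $(1-2s)^{-1}$ as $s\uparrow\sfrac12$.

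\textbf{Assembling the estimate.} Inserting the Hardy bound into the splitting with $s=\sfrac12-\epsilon$, and using $\|w\|_{H^{\sfrac12-\epsilon}(\Omega)}\le C(\Omega)\|w\|_{H^{\sfrac12}(\Omega)}$ (uniform in $\epsilon$, by the same computation as above), one obtains $[\widetilde w]_{H^{\sfrac12-\epsilon}(\R^3)}^2\le C(\Omega)\,\epsilon^{-1}\,\|w\|_{H^{\sfrac12}(\Omega)}^2$, which together with $\|\widetilde w\|_{L^2(\R^3)}=\|w\|_{L^2(\Omega)}$ is the deterministic estimate stated above. Squaring, setting $w=u(\cdot,v)$, integrating in $v$, and using the norm bound of Lemma~\ref{lemma:soksok} gives $\|\widetilde{\SO K\SO K f}\|_{L^2_v(\R^3;H^{\sfrac12-\epsilon}_x(\R^3))}\le C\epsilon^{-\sfrac12}\|f\|_{L^2(\Omega\times\R^3)}$, which is the assertion. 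As a cross‑check, the same boundary‑layer analysis can be built on the identity $\SO K\SO K f=(SKSK\widetilde f)\big|_\Omega-\big(SK[(1-\chi_\Omega)\,SK\widetilde f\,]\big)\big|_\Omega$, with the first term handled by Lemma~\ref{lemma:ksk} and the $L^2$‑boundedness of $S$; the borderline‑exponent loss resurfaces exactly when one extends a restriction to $\Omega$ back to $\R^3$ by zero.
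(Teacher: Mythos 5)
Your proposal is correct, and it takes a genuinely different route from the paper's. The paper's own proof performs the same three-way splitting of the Slobodeckij integral (the $\Omega\times\Omega$ diagonal, the two cross pieces, and the vanishing complement piece), and it also reduces the cross piece to a boundary-weighted $L^2$ integral, namely $\int_{\R^3}\int_\Omega d_y^{-1+2\epsilon}|f_2(y,v)|^2\,dy\,dv$, via the same observation $\int_{\R^3\setminus\Omega}|x-y|^{-4+2\epsilon}\,dx\leq C\,d_y^{-1+2\epsilon}$. The divergence from your argument is in how that weighted integral is controlled. The paper does \emph{not} pass through the $H^{1/2}(\Omega)$ norm of $f_2$; instead it unravels $f_2=\SO K\SO K f$ directly: Cauchy--Schwarz on the $\SO K$ representation (Proposition~\ref{prop:SKsquare}) to peel off two integral layers, the change of variables of Lemma~\ref{lemma:ChangeOfVariable1} to move the weight $d_y^{-1+2\epsilon}$ onto a chord, and Lemma~\ref{lemma:FracIntegralOverChord} (estimate \eqref{eq:FracIntegralOverChord2}) to absorb one power of the distance weight at the cost of $1/\epsilon$, iterating once more via \eqref{eq:FracIntegralOverChord}. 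You instead first invoke Lemma~\ref{lemma:soksok} to put $f_2(\cdot,v)$ in $H^{1/2}_x(\Omega)$, and then apply a general fractional Hardy inequality $\int_\Omega d_x^{-2s}|w|^2\,dx\leq\frac{C}{1-2s}\|w\|^2_{H^s(\Omega)}$, valid for $0<s<\tfrac12$ on Lipschitz domains. Your approach buys modularity: the zero-extension step becomes a fact about $H^{1/2}(\Omega)$ on any $C^2$ (even Lipschitz) domain, cleanly decoupled from the transport structure, and it would generalize to settings where $f_2$ has no convenient integral representation. The paper's approach buys self-containment and keeps everything within its own geometric toolkit: Lemma~\ref{lemma:FracIntegralOverChord} is in effect the paper's hand-built replacement for the fractional Hardy inequality, exploiting convexity and positive curvature. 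One caveat for you: the whole $\epsilon$-dependence of the theorem is carried by the sharp $(1-2s)^{-1}$ blow-up of the Hardy constant, so that rate must genuinely be tracked through the partition-of-unity and flattening (verifying that the bi-Lipschitz changes of variables contribute constants independent of $s$), or else cited from a source that states the explicit $s$-dependence; the one-dimensional reduction you sketch does give the right rate, but it is not a step you can wave through.
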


    \noindent That is, zero extension only reduces infinitesimal regularity. Therefore, after zero extension, we can repeat our strategy and obtain the desired result.
    
    The rest of the article is organized as follows. 
    Section~\ref{sec:BoltzmannEquation} gives a brief overview of the Boltzmann equation and the details of our setting. 
    In Section~\ref{sec:CollisionOperator}, we recall several basic properties of the linearized collision operator. 
    In Section~\ref{sec:VelAvgLBE}, we recapitulate the velocity averaging for linearized Boltzmann equation in the whole space.
    Section~\ref{sec:geometry}  provides some geometric properties for bounded convex domains that will be used in our estimates. 
    In Section~\ref{sec:transport}, we study the regularity of transport equation in bounded convex domains.  
    Section~\ref{sec:averaging} is devoted to the regularity via velocity averaging.   

\section{Boltzmann Equation} \label{sec:BoltzmannEquation}
    The Boltzmann equation reads
    \begin{equation} \label{eq:B}
        \partial_t F +v\cdot \nabla_x F = Q(F,F).
    \end{equation}
    Here, $F=F(t,x,v)\geq 0$ is the distribution function for the particles located in the position $x$ with velocity $v$ at time $t$. 
    The collision operator $Q$ is defined as 
    \begin{equation}
        Q(F,G)=\int\limits_{\R^3}\int\limits^{2\pi}_0\int\limits^{\sfrac{\pi}{2}}_0 \biggl(F(v')G(v'_*)-F(v)G(v_*) \biggr)B(|v-v_*|,\theta)\, d\theta d\epsilon dv_*,
    \end{equation}
    where $v'$ and $v'_*$ are the velocities after the elastic collision of two particles whose velocities are 
    $v$ and $v_*$, respectively, before the encounter. Here, the cross-section $B$ is chosen according to the type of interaction between particles. 
    The precise form of cross-section $B$ depends on 
    the model that is being studied. To specify the properties of the cross-section, we adopt the following coordinates. 
    We set
    \[
        e_1=\frac{v_*-v}{|v_*-v|}
        \]
    and choose $e_2\in\S^2$ and $e_3\in\S^2$ such that $\{e_1,e_2,e_3\}$ forms an 
    orthonormal basis for $\R^3$, and define
    \[
        \alpha=\cos \theta e_1 +\sin\theta \cos\epsilon e_2+\sin\theta\sin\epsilon e_3.
        \]
    Then,
    \begin{align}
        v'&=v+\bigl( (v_*-v)\cdot\alpha \bigr)\alpha, \\
        v'_*&=v_*-\bigl( (v_*-v)\cdot\alpha \bigr)\alpha.
    \end{align}
    Throughout this article, regarding the cross-section, we apply Grad's angular cutoff potential \cite{grad63} by assuming
    \begin{equation} \label{eq:cutoff}
        0\leq B(|v-v_*|,\theta)\leq C|v-v_*|^\gamma \cos\theta \sin \theta,
    \end{equation}
    where, as mentioned above, $\gamma$ depends on the model being studied. Our discussion includes hard sphere model 
    ($\gamma = 1$), cutoff hard potential ($0<\gamma<1$), and cutoff Maxwellian molecular gases ($\gamma=0$).  
    Consider the stationary solution $F=F(x,v)$ as a perturbation of the standard Maxwellian,
    \[
        M(v)=\pi^{-\frac{3}{2}}e^{-|v|^2},
        \]
    in the form
    \begin{equation} \label{eq:linearizedF}
        F=M+M^{\frac 1 2}f.
    \end{equation}
    Plugging the expression \eqref{eq:linearizedF} into \eqref{eq:B} and discarding the nonlinear term, we arrive at the 
    stationary linearized Boltzmann equation
    \begin{equation}
        v\cdot \nabla_x f(x,v) = L(f)(x,v),
    \end{equation}
    with linearized collision operator $L$, which reads 
    \begin{equation}
        L(f)=M^{-\frac 1 2}\bigl(Q(M,M^{\frac 1 2}f)+Q(M^{\frac 1 2}f,M) \bigr).
    \end{equation}
    Under the assumption \eqref{eq:cutoff}, $L$ can be decomposed into a multiplicative operator and an integral operator, see \cite{grad63},
    \begin{equation} \label{eq:linearizedL}
        L(f)=-\nu(v)f+K(f).
    \end{equation}
    Here, $\nu$ is a function of velocity variable $v$ behaving like $(1+|v|)^\gamma$, i.e., there exist two positive constants 
    $\nu_0$ and $\nu_1$, depending only on $\gamma$, such that
    \begin{equation} \label{eq:nu}
        0<\nu_0(1+|v|)^\gamma<\nu(v)<\nu_1(1+|v|)^\gamma,
    \end{equation}
    for all $v\in\R^3$. The integral operator $K$ reads
    \[
        K(f)(x,v)=\int_{\R^3}f(x,v_*)k(v_*,v)\, dv_*,
    \]
    where the collision kernel $k$ is symmetric, that is, $k(v,v_*)=k(v_*,v)$. 
    Notice that the assumption of the cross-section here is different from and more general 
    than that in \cite{chen15,chen19}. The significant difference is that the operator $K$ in the case we consider 
    does not guarantee to have regularity in velocity variables.

    Under the decomposition \eqref{eq:linearizedL}, we then consider the boundary value problem 
    \begin{equation} \label{eq:SLBwithBD}
        \left\{ 
        \begin{aligned}
            \nu(v)f(x,v)+v\cdot\nabla_x f(x,v) &=K(f)(x,v),&\text{for }&x\in\Omega,\, v\in\R^3, \\
            f|_{\Gamma_-}(q,v)&=g(q,v),&\text{for }&(q,v)\in\Gamma_-.
        \end{aligned}\right.
    \end{equation}
    \noindent Let us pause here to make clear that how trace is being defined. We follow \cite{cessenat84,cessenat85,kawagoe18}. For $f\in L^p(\O\times\R^3)$ satisfying the equation 
        \begin{equation}
            \nu(v)f(x,v)+v\cdot\nabla_x f(x,v) =K(f)(x,v)
        \end{equation}
        in $\O\times\R^3$, since $K(f)\in L^p(\O\times\R^3)$ according to Proposition~\ref{prop:KLp},  we have
        \begin{equation}
            v\cdot\nabla_x f(x,v) =-\nu(v)f(x,v) + K(f)(x,v) \in L^p_x(\O)
        \end{equation}
        for a.e. $v\in\R^3$. For such $v\in\R^3$, we fix $q\in \Gamma_-(v)$, where 
        \[
            \Gamma_-(v):=\{ q\in \partial\O:\, (q,v)\in \Gamma_-  \},
        \]
        and consider the function 
        \begin{equation}
            h_{q,v}(s)=f(q+sv,v),
        \end{equation}
        for $0<s<\tau_+(q,v)$, where $\tau_+(q,v)$ is as defined in Definition~\ref{def:ExitTime}. Noticing that 
        \begin{equation}
            \frac{dh_{q,v}(s)}{ds}   = v\cdot\nabla_x f(q+sv,v),
        \end{equation}
        and the formulas for change of variables,
        \begin{align}
            \int_{\O} |f(x,v)|^p \, dx &= \int_{\Gamma_-(v)}\int^{\tau_+(q,v)}_0 |f(q+sv,v)|^p  |v\cdot n(q)| \, dsd\Sigma(q) \\
            \int_{\O} |v\cdot\nabla_x f(x,v)|^p \, dx &= \int_{\Gamma_-(v)}\int^{\tau_+(q,v)}_0 |\frac{\partial}{\partial s}f(q+sv,v)|^p |v\cdot n(q)| \, dsd\Sigma(q),
        \end{align}
        we obtain $h_{q,v}\in W^{1,p}_s (0,\tau_+(q,v))$ for a.e. $(q,v)\in\Gamma_-$, which would imply $h_{q,v}(s)$ is an 
        absolutely continuous function on $(0,\tau_+(q,v))$ after possibly being redefined on a set of measure zero. As a result, we define  
        \begin{equation}
            f(q,v):=\lim_{s\to 0^+}h_{q,v}(s)= \lim_{s\to 0^+} f(q+sv,v),
        \end{equation}
        for $(q,v)\in \Gamma_-.$ 
\section{Properties of the linearized collision operator} \label{sec:CollisionOperator}
In this section, we review some basic properties of the linearized collision operator $L$, see \cite{caflisch80,grad63}. 
As mentioned earlier, under the assumption \eqref{eq:cutoff}, 
$L$ can be decomposed into a multiplicative operator and an integral operator,
\begin{equation} \label{eq:linearizedL2}
    L(f)=-\nu(v)f+K(f).
\end{equation}
There are two constants $0<\nu_0<\nu_1$ such that the following inequality holds for all $v\in \R^3$,
\begin{equation} \label{eq:nu2}
    0<\nu_0\leq \nu_0(1+|v|)^\gamma<\nu(v)<\nu_1(1+|v|)^\gamma.
\end{equation}
The integral operator $K$ is defined as
\begin{equation*}
    K(f)(x,v)=\int_{\R^3}f(x,v_*)k(v_*,v)\, dv_*,
\end{equation*}
and the collision kernel $k$ is symmetric. Furthermore, by Caflisch \cite{caflisch80}, we have an upper bound for $k$,
\begin{equation} \label{eq:k}
    |k(v,v_*)|\leq C \frac 1 {|v-v_*|}(1+|v|+|v_*|)^{-(1-\gamma)}e^{-\frac 1 8 \left(|v-v_*|^2 + \frac{(|v|^2-|v_*|^2)^2}{|v-v_*|^2} \right)},
\end{equation}
where $C>0$ is a constant depending only on $\gamma$. The following lemma by Caflisch \cite{caflisch80} is crucial in our estimates.
\begin{lem} \label{lemma:Cafineq}
    For any positive constants $\epsilon$, $a_1$ and $a_2$, there exists $C=C(\epsilon,a_1,a_2)$ depending only on $\epsilon$, $a_1$ and $a_2$ such that
        \begin{align*}
        \int_{\R^3} \frac{1}{|v-v_*|^{3-\epsilon}}e^{-a_1|v-v_*|^2 - a_2 \frac{(|v|^2-|v_*|^2)^2}{|v-v_*|^2}}\,dv_* &\leq C\,\frac{1}{1+|v|} \\
        &\leq C.
        \end{align*}
\end{lem}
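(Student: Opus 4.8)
The plan is to reduce the three‑dimensional integral to a one‑dimensional one by choosing spherical coordinates adapted to $v$, exploiting the fact that the exponent depends on $v_*$ only through $|v-v_*|$ and $|v|^2-|v_*|^2$. First I would fix $v\neq 0$ (the case $v=0$ being trivial, as then the second term in the exponent vanishes and the integral is finite by $|v-v_*|^{-(3-\epsilon)}$ being locally integrable against a Gaussian) and write $v_* = v + r\omega$ with $r = |v-v_*|>0$ and $\omega\in\S^2$, so $dv_* = r^2\,dr\,d\omega$. Then $|v_*|^2 - |v|^2 = 2r\,(v\cdot\omega) + r^2$, hence
\begin{equation*}
\frac{(|v|^2-|v_*|^2)^2}{|v-v_*|^2} = \frac{(2r\,v\cdot\omega + r^2)^2}{r^2} = \bigl(2\,v\cdot\omega + r\bigr)^2.
\end{equation*}
Writing $v\cdot\omega = |v|\cos\phi$ where $\phi$ is the polar angle measured from the direction of $v$, and integrating out the trivial azimuthal angle, the integral becomes, up to the constant $2\pi$,
\begin{equation*}
\int_0^\infty \int_0^\pi \frac{1}{r^{3-\epsilon}}\, e^{-a_1 r^2 - a_2 (2|v|\cos\phi + r)^2}\, r^2 \sin\phi\, d\phi\, dr
= \int_0^\infty \frac{1}{r^{1-\epsilon}}\, e^{-a_1 r^2} \left( \int_{-1}^{1} e^{-a_2(2|v|\mu + r)^2}\, d\mu \right) dr,
\end{equation*}
after the substitution $\mu = \cos\phi$.

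The key step is to bound the inner $\mu$‑integral. Substituting $t = 2|v|\mu + r$ gives $d\mu = dt/(2|v|)$, so the inner integral equals $\frac{1}{2|v|}\int_{r-2|v|}^{r+2|v|} e^{-a_2 t^2}\,dt \le \frac{1}{2|v|}\int_{\R} e^{-a_2 t^2}\,dt = \frac{1}{2|v|}\sqrt{\pi/a_2}$. This already produces the factor $\tfrac{1}{|v|}$, but I must also retain integrability near $v=\infty$ versus the competing need for a bound uniform down to $|v|$ small; so in parallel I would use the trivial bound (inner integral) $\le \int_{-1}^1 1\, d\mu = 2$, valid for all $v$. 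Combining: the inner integral is $\le \min\{2,\ C_{a_2}/|v|\} \le C_{a_2}'/(1+|v|)$ for a suitable constant. Pulling this out, the remaining factor is $\int_0^\infty r^{-(1-\epsilon)} e^{-a_1 r^2}\,dr$, which converges because $1-\epsilon < 1$ at the origin and the Gaussian controls the tail; call its value $C(\epsilon,a_1)$. Multiplying by $2\pi$ yields the claimed bound $C(\epsilon,a_1,a_2)\,\frac{1}{1+|v|} \le C(\epsilon,a_1,a_2)$.

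The main obstacle — really the only delicate point — is getting the $\tfrac{1}{1+|v|}$ decay uniformly, rather than a bound that blows up as $|v|\to 0$ from the $\tfrac{1}{2|v|}$ factor, or one that fails to decay as $|v|\to\infty$. This is resolved precisely by interpolating between the two elementary bounds on the inner integral as above; no cancellation or fine structure of the exponent is needed beyond the algebraic identity for $(|v|^2-|v_*|^2)^2/|v-v_*|^2$. I should also note that the whole computation is insensitive to replacing $a_1,a_2$ by any positive constants and to the value of $\epsilon\in(0,3)$ (only $\epsilon<1$ would actually be needed for the radial integrability at $0$ if the power were $3$; here the $r^2$ Jacobian improves it, so any $\epsilon>0$ works and even $\epsilon=0$ is fine), so the constant depends only on $\epsilon,a_1,a_2$ as stated.
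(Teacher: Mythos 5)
The paper gives no proof of this lemma; it is cited from Caflisch \cite{caflisch80}, so there is no in-house argument to compare against. Your self-contained proof is correct and is essentially the standard one: pass to polar coordinates centered at $v$ so that $v_*=v+r\omega$, observe that $|v_*|^2-|v|^2=2r(v\cdot\omega)+r^2$ turns the second exponent into $(2v\cdot\omega+r)^2$, separate the azimuthal and polar angles, and then interpolate between the trivial bound $\int_{-1}^1 e^{-a_2(2|v|\mu+r)^2}\,d\mu\le 2$ and the $|v|^{-1}$-bound coming from the substitution $t=2|v|\mu+r$ to produce a factor $C(1+|v|)^{-1}$ that is uniform in $r$; the remaining radial integral $\int_0^\infty r^{-1+\epsilon}e^{-a_1 r^2}\,dr$ converges for $\epsilon>0$. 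The interpolation $\min\{2,\,C/|v|\}\le C'/(1+|v|)$ indeed holds with, say, $C'=\max\{4,2C\}$, and the case $v=0$ is absorbed since there only the trivial angular bound is needed, so the argument is complete. One small slip in your closing aside: the claim that ``even $\epsilon=0$ is fine'' is false, since at $\epsilon=0$ the radial integral becomes $\int_0^\infty r^{-1}e^{-a_1 r^2}\,dr$, which diverges logarithmically at the origin. The lemma assumes $\epsilon>0$ and your main argument uses it, so this is immaterial to the validity of the proof, but the aside should be struck.
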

\noindent The above inequality combining with \eqref{eq:k} immediately implies the following facts.
\begin{cor} \label{cor:k}
    For any $v\in\R^3$,
    \begin{align}
        \int_{\R^3} |k(v,v_*)| \, dv_* &\leq C\, \left(  \frac 1 {1+|v|} \right)^{2-\gamma} \leq C, \label{eq:kl1} \\
        \int_{\R^3} |k(v,v_*)|^2 \, dv_* &\leq C \, \left(  \frac 1 {1+|v|} \right)^{3-2\gamma}  \leq C. \label{eq:kl2}
    \end{align}
\end{cor}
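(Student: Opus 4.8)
The plan is to obtain both bounds directly from the pointwise estimate \eqref{eq:k} on the kernel $k$ combined with Caflisch's integral inequality (Lemma~\ref{lemma:Cafineq}); no additional ideas are needed, only careful bookkeeping of the exponents. The scheme in each case is: extract from \eqref{eq:k} the factor that depends on $v$ alone, pull it outside the $v_*$-integral, and apply Lemma~\ref{lemma:Cafineq} to the remaining integral of a negative power of $|v-v_*|$ against a Gaussian-type weight.

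For \eqref{eq:kl1} I would start from \eqref{eq:k} and first discard part of the polynomial weight using $1-\gamma\ge 0$ and $1+|v|+|v_*|\ge 1+|v|$, giving
\[
    |k(v,v_*)| \le C\,(1+|v|)^{-(1-\gamma)}\,\frac{1}{|v-v_*|}\,e^{-\frac18\left(|v-v_*|^2+\frac{(|v|^2-|v_*|^2)^2}{|v-v_*|^2}\right)}.
\]
Integrating in $v_*$, pulling out the $v$-only factor, and applying Lemma~\ref{lemma:Cafineq} with $\epsilon=2$, $a_1=a_2=\tfrac18$ yields $\int_{\R^3}|v-v_*|^{-1}e^{-\frac18(\cdots)}\,dv_*\le C(1+|v|)^{-1}$, so that $\int_{\R^3}|k(v,v_*)|\,dv_*\le C(1+|v|)^{-(2-\gamma)}$; since $2-\gamma\ge 1$, this is in turn $\le C$. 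For \eqref{eq:kl2} the argument is identical after squaring \eqref{eq:k}: one gets
\[
    |k(v,v_*)|^2 \le C\,(1+|v|)^{-2(1-\gamma)}\,\frac{1}{|v-v_*|^2}\,e^{-\frac14\left(|v-v_*|^2+\frac{(|v|^2-|v_*|^2)^2}{|v-v_*|^2}\right)},
\]
and Lemma~\ref{lemma:Cafineq} with $\epsilon=1$, $a_1=a_2=\tfrac14$ controls $\int_{\R^3}|v-v_*|^{-2}e^{-\frac14(\cdots)}\,dv_*$ by $C(1+|v|)^{-1}$; combined with the extracted weight this gives $\int_{\R^3}|k(v,v_*)|^2\,dv_*\le C(1+|v|)^{-(3-2\gamma)}\le C$, using $3-2\gamma\ge 1$.

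The only point requiring any care — and the closest thing to an obstacle — is correctly matching the singular factors $|v-v_*|^{-1}$ and $|v-v_*|^{-2}$ to the form $|v-v_*|^{-(3-\epsilon)}$ required by Lemma~\ref{lemma:Cafineq}, i.e.\ choosing $\epsilon=2$ and $\epsilon=1$ respectively (legitimate precisely because these are integrable singularities in $\R^3$), and checking that after extracting the polynomial weight the remaining exponential still has strictly positive coefficients so the lemma applies. Everything else is routine.
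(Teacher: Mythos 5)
Your proposal is correct and is essentially the argument the paper has in mind: the paper simply states that Corollary~\ref{cor:k} follows immediately from the pointwise bound \eqref{eq:k} together with Lemma~\ref{lemma:Cafineq}, and your exponent bookkeeping (dropping $|v_*|$ from the polynomial weight, then applying Lemma~\ref{lemma:Cafineq} with $\epsilon=2$ for \eqref{eq:kl1} and $\epsilon=1$ for \eqref{eq:kl2}) is exactly how that implication is carried out.
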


\begin{prop} \label{prop:KLp}
    The integral operator $K:L^p_v(\R^3)\to L^p_v(\R^3)$ is bounded for $1\leq p \leq \infty$.
\end{prop}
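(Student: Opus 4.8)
The plan is to invoke Schur's test, whose hypotheses are exactly the two bounds recorded in Corollary~\ref{cor:k} together with the symmetry of the kernel. Write $K(f)(v)=\int_{\R^3}k(v_*,v)f(v_*)\,dv_*$. By \eqref{eq:kl1} we have $\sup_{v}\int_{\R^3}|k(v_*,v)|\,dv_*\leq C$, and since $k(v,v_*)=k(v_*,v)$ the same constant controls $\sup_{v_*}\int_{\R^3}|k(v_*,v)|\,dv\leq C$. These are the row- and column-sum bounds needed for the Schur estimate on the whole range $1\leq p\leq\infty$.

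Concretely, I would first dispose of the endpoints. For $p=\infty$, $|K(f)(v)|\leq\|f\|_{L^\infty_v}\int_{\R^3}|k(v_*,v)|\,dv_*\leq C\|f\|_{L^\infty_v}$. For $p=1$, Tonelli's theorem gives
\begin{equation*}
\|K(f)\|_{L^1_v}\leq\int_{\R^3}\int_{\R^3}|k(v_*,v)|\,|f(v_*)|\,dv_*\,dv=\int_{\R^3}|f(v_*)|\Bigl(\int_{\R^3}|k(v_*,v)|\,dv\Bigr)dv_*\leq C\|f\|_{L^1_v},
\end{equation*}
where the last step uses symmetry to turn the inner integral into a row sum. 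For $1<p<\infty$ I would run the standard Schur argument: split $|k(v_*,v)|=|k(v_*,v)|^{1/p'}|k(v_*,v)|^{1/p}$, apply Hölder in $v_*$ to get
\begin{equation*}
|K(f)(v)|\leq\Bigl(\int_{\R^3}|k(v_*,v)|\,dv_*\Bigr)^{1/p'}\Bigl(\int_{\R^3}|k(v_*,v)|\,|f(v_*)|^p\,dv_*\Bigr)^{1/p}\leq C^{1/p'}\Bigl(\int_{\R^3}|k(v_*,v)|\,|f(v_*)|^p\,dv_*\Bigr)^{1/p},
\end{equation*}
then raise to the $p$-th power, integrate in $v$, apply Tonelli, and use the column-sum bound (again via symmetry) to conclude $\|K(f)\|_{L^p_v}^p\leq C^{p/p'}\cdot C\,\|f\|_{L^p_v}^p$. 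Alternatively one may simply interpolate the $p=1$ and $p=\infty$ bounds by the Riesz--Thorin theorem, which is slightly shorter to write but yields the same conclusion.

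There is no real obstacle here: the entire content of the proposition is the kernel bound \eqref{eq:kl1}, which has already been established via Caflisch's estimate \eqref{eq:k} and Lemma~\ref{lemma:Cafineq}. The only point deserving a word of care is the repeated use of $k(v,v_*)=k(v_*,v)$ to pass between the two integration variables, so that a single integrability bound suffices for both the row and the column sums; I would state this explicitly.
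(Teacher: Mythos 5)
Your argument is correct and follows essentially the same route as the paper's: the Hölder splitting $|k|=|k|^{1/p}\,|k|^{1/p'}$, Tonelli, and the $L^1$ kernel bound from Corollary~\ref{cor:k} is exactly the paper's computation for $1<p<\infty$, and the paper simply leaves the $p=1,\infty$ endpoints as ``straightforward,'' which you spell out. Naming the argument Schur's test and flagging the role of the symmetry $k(v,v_*)=k(v_*,v)$ are useful clarifications but do not change the underlying proof.
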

\begin{proof}
    If $1<p<\infty$, for given $h\in L^p_v(\R^3)$, we have 
    \begin{equation} \begin{split}
        | Kh(v) |^p &=\left( \int_{\R^3} h(v_*)k(v_*,v)\, dv_* \right)^p \\
        &\leq \biggl( \int_{\R^3} |h(v_*)||k(v_*,v)|^{\frac{1}{p}}|k(v_*,v)|^{\frac{1}{p'}}\,dv_* \biggr)^p  \\
        &\leq  \left(  \int_{\R^3}  |h(v_*)|^p|k(v_*,v)|\, dv_* \right) \left( \int_{\R^3} |k(v_*,v)|\, dv_* \right)^{\frac{p}{p'}}  \\
        &\leq C  \int_{\R^3}  |h(v_*)|^p |k(v_*,v)|\, dv_*,
    \end{split}\end{equation}
    where we have applied Corollary~\ref{cor:k} in the above derivation. Therefore, applying Corollary~\ref{cor:k} again, we obtain 
    \begin{equation} \begin{split}
        \int_{\R^3} |Kh(v)|^p\, dv&\leq C\int_{\R^3} \int_{\R^3}  |h(v_*)|^p|k(v_*,v)|\,dv_*dv \\
        &=C \int_{\R^3} \int_{\R^3} \big( |k(v_*,v)|\,dv \big) |h(v_*)|^p\,dv_* \\
        &\leq C  \int_{\R^3}  |h(v_*)|^p dv_*.
    \end{split}\end{equation}
    The proof for the cases where $p=1,\infty$ is straightforward. We should omit it.
\end{proof}
\noindent We see from Proposition~\ref{prop:KLp} $K$ is a bounded operator from $L^p_v(\R^3)$ to $L^p_v(\R^3)$. In particular, we 
shall only use the case $p=2$.
\begin{prop} \label{prop:1overvk}
    For any $v\in\R^3$ and $\epsilon\in(0,2)$, there is a constant $C$ independent of $v$ such that 
    \begin{equation}
        \int_{\R^3}\frac{1}{|v_*|^{2-\epsilon}}\,|k(v,v_*)|\,dv_* \leq C.
    \end{equation}
\end{prop}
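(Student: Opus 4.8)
The plan is to combine Caflisch's kernel bound \eqref{eq:k} with Lemma~\ref{lemma:Cafineq}, after separating the two sources of singularity in the integrand: the weight $|v_*|^{-(2-\epsilon)}$, which is singular at $v_*=0$, and the factor $|v-v_*|^{-1}$ in \eqref{eq:k}, which is singular at $v_*=v$. Since $\gamma\le 1$, we have $(1+|v|+|v_*|)^{-(1-\gamma)}\le 1$, so \eqref{eq:k} gives
\[
\frac{|k(v,v_*)|}{|v_*|^{2-\epsilon}} \le \frac{C}{|v_*|^{2-\epsilon}\,|v-v_*|}\, e^{-\frac18\left(|v-v_*|^2+\frac{(|v|^2-|v_*|^2)^2}{|v-v_*|^2}\right)}.
\]

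Next I would split $\R^3=A\cup B$ with $A:=\{v_*:\,2|v_*|\ge|v-v_*|\}$ and $B:=\{v_*:\,2|v_*|<|v-v_*|\}$. On $A$ the weight is dominated by the transport singularity, $|v_*|^{-(2-\epsilon)}\le 2^{2-\epsilon}|v-v_*|^{-(2-\epsilon)}$, so the integrand is bounded there by $C\,|v-v_*|^{-(3-\epsilon)}e^{-\frac18(\cdots)}$, and since $\epsilon\in(0,2)$ makes $3-\epsilon$ an admissible exponent, Lemma~\ref{lemma:Cafineq} (with $a_1=a_2=\tfrac18$) bounds $\int_A$ by a constant. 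On $B$ the triangle inequality forces $|v_*|<|v|$ and $|v-v_*|>\tfrac23|v|$, hence $|v-v_*|^{-1}\le\tfrac32|v|^{-1}$ and $e^{-\frac18|v-v_*|^2}\le e^{-\frac1{18}|v|^2}$; discarding the remaining exponential factor (which is $\le 1$) we get
\[
\int_B \frac{|k(v,v_*)|}{|v_*|^{2-\epsilon}}\,dv_* \le \frac{3C}{2|v|}\,e^{-\frac1{18}|v|^2}\int_{|v_*|<|v|}\frac{dv_*}{|v_*|^{2-\epsilon}} = \frac{6\pi C}{1+\epsilon}\,|v|^{\epsilon}e^{-\frac1{18}|v|^2},
\]
which is bounded uniformly in $v$ because $r\mapsto r^{\epsilon}e^{-r^2/18}$ is bounded on $[0,\infty)$ for every $\epsilon>0$. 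Adding the contributions of $A$ and $B$ yields the claim; when $v=0$ the set $B$ is empty and only the estimate on $A$ is needed.

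The computations are elementary once this splitting is in place; the one point that deserves care is the geometry on $B$, namely checking that the constraint $2|v_*|<|v-v_*|$ makes both $|v_*|$ and $|v-v_*|$ comparable to $|v|$, so that the Gaussian decay in $|v-v_*|$ can be transferred to decay in $|v|$ and thereby absorb the polynomial factor $|v|^{\epsilon}$ produced by integrating the weight over the ball $\{|v_*|<|v|\}$. One also checks that the resulting constant stays bounded as $\epsilon$ ranges over $(0,2)$, although the statement only requires independence of $v$.
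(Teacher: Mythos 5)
Your proof is correct, and it shares the basic idea with the paper (compare $|v_*|$ to $|v-v_*|$), but the execution of each half is genuinely different. The paper first discards the cross term in the Caflisch exponential, keeping only $|v-v_*|^{-1}e^{-\frac18|v-v_*|^2}$, then bounds the integrand by $|v_*|^{-(3-\epsilon)}e^{-\frac18|v-v_*|^2}+|v-v_*|^{-(3-\epsilon)}e^{-\frac18|v-v_*|^2}$ regardless of which case holds; the second term is handled by the substitution $w=v-v_*$, and the first by splitting $\{|v_*|\le 1\}$ (where the Gaussian is simply dropped) from $\{|v_*|>1\}$ (where the singular weight is dropped). In particular the paper never invokes Lemma~\ref{lemma:Cafineq} in this proposition and never tracks the dependence on $|v|$. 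You instead retain the full Caflisch exponential, apply Lemma~\ref{lemma:Cafineq} on the region $A=\{2|v_*|\ge|v-v_*|\}$, and on $B=\{2|v_*|<|v-v_*|\}$ exploit the geometric observation that both $|v_*|$ and $|v-v_*|$ are comparable to $|v|$, so the Gaussian in $|v-v_*|$ becomes decay in $|v|$ that absorbs the $|v|^\epsilon$ growth from integrating the weight. The factor $2$ in your split is essential and you used it correctly: a bare comparison $|v_*|<|v-v_*|$ would not force $|v_*|<|v|$. Your route is somewhat more machinery-heavy (it leans on Lemma~\ref{lemma:Cafineq} and a $v$-dependent balance) where the paper's is a direct, translation-invariant computation, but both are sound; your version also makes explicit the quantitative uniformity in $v$ via $|v|^\epsilon e^{-|v|^2/18}\le C$, which the paper achieves implicitly.
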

\begin{proof}
    Let $v\in\R^3$ and $\epsilon\in(0,2)$ be given. From \eqref{eq:k}, we have 
    \[
        |k(v,v_*)|\leq C\, \frac{1}{|v-v_*|}e^{-\frac 1 8 |v-v_*|^2}.
    \]
    It therefore suffices to prove that 
    \begin{equation} \label{eq:1overvk}
        \int_{\R^3}\frac{1}{|v_*|^{2-\epsilon}|v-v_*|}e^{-\frac 1 8 |v-v_*|^2}\,dv_*\leq C
    \end{equation}
    for some $C$. If $|v_*|\leq |v-v_*|$, then 
    \[
        \frac{1}{|v_*|^{2-\epsilon}|v-v_*|} \leq \frac{1}{|v_*|^{3-\epsilon}}.
    \]
    If $|v_*| > |v-v_*|$, then 
    \[
        \frac{1}{|v_*|^{2-\epsilon}|v-v_*|} \leq \frac{1}{|v-v_*|^{3-\epsilon}}.
    \]
    In any case,
    \begin{equation}\begin{split}
        &\int_{\R^3}\frac{1}{|v_*|^{2-\epsilon}|v-v_*|}e^{-\frac 1 8 |v-v_*|^2}\,dv_* \\
        &\quad \leq \int_{\R^3}\frac{1}{|v_*|^{3-\epsilon}}e^{-\frac 1 8 |v-v_*|^2}\,dv_* + \int_{\R^3}\frac{1}{|v-v_*|^{3-\epsilon}}e^{-\frac 1 8 |v-v_*|^2}\,dv_*. 
    \end{split}\end{equation}
    Clearly,
    \[
        \int_{\R^3}\frac{1}{|v-v_*|^{3-\epsilon}}e^{-\frac 1 8 |v-v_*|^2}\,dv_* = \int_{\R^3}\frac{1}{|w|^{3-\epsilon}}e^{-\frac 1 8 |w|^2}\,dw \leq C. 
    \]
    Notice that 
    \begin{equation} \begin{split}
        &\int_{\R^3}\frac{1}{|v_*|^{3-\epsilon}}e^{-\frac 1 8 |v-v_*|^2}\,dv_* \\
        &\quad \leq\int_{\{|v_*|\leq 1\}}\frac{1}{|v_*|^{3-\epsilon}}e^{-\frac 1 8 |v-v_*|^2}\,dv_*+ \int_{\{ |v_*|>1\}}\frac{1}{|v_*|^{3-\epsilon}}e^{-\frac 1 8 |v-v_*|^2}\,dv_* \\
        &\quad \leq\int_{\{|v_*|\leq 1\}}\frac{1}{|v_*|^{3-\epsilon}}\,dv_* +\int_{\R^3}e^{-\frac 1 8 |v-v_*|^2}\,dv_* \\
        &\quad \leq C.
    \end{split}\end{equation}
    We deduce \eqref{eq:1overvk}.
\end{proof}
In Section~\ref{sec:transport}, we will need the following estimate. The case for hard sphere is proved in \cite{chen14}. More generally, the proof therein can be 
extended to the cases for cutoff hard and cutoff Maxwellian potentials.
\begin{lem} \label{lemma:MaxwellianBound}
    Suppose that a function $h$ satisfies the following estimate for some constant 
    $a \in [0,\frac 1 4)$,
    \begin{equation}
        |h(v)|\leq e^{-a|v|^2}.
    \end{equation}
    Then there exists a positive constant $C_a$ such that
    \begin{equation}
        |K(h)(v)|\leq C_a e^{-a|v|^2},
    \end{equation}
    where $C_a$ is a constant depending on $a$.
\end{lem}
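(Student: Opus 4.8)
The plan is to estimate $K(h)(v)=\int_{\R^3}h(v_*)k(v,v_*)\,dv_*$ directly from the pointwise kernel bound \eqref{eq:k}, following (and slightly extending beyond the hard sphere case) the argument of \cite{chen14}. Using $|h(v_*)|\le e^{-a|v_*|^2}$ and the fact that $1-\gamma\ge 0$ under \eqref{eq:cutoff}, so that $(1+|v|+|v_*|)^{-(1-\gamma)}\le 1$, one obtains
\begin{equation*}
 |K(h)(v)|\le C\int_{\R^3}\frac{1}{|v-v_*|}\,e^{-a|v_*|^2}\,e^{-\frac18\left(|v-v_*|^2+\frac{(|v|^2-|v_*|^2)^2}{|v-v_*|^2}\right)}\,dv_*.
\end{equation*}
Thus everything reduces to showing that the right-hand side is bounded by $C_a\,e^{-a|v|^2}$; in other words, one must transfer the Gaussian weight from $v_*$ to $v$ at the cost of a constant, absorbing the discrepancy into the kernel's own Gaussian decay.

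The heart of the matter is the pointwise inequality: for every $a\in[0,\tfrac14)$ there is a constant $c=c(a)>0$ — one may take $c=\tfrac{1-4a}{16}$ — such that
\begin{equation*}
 e^{-a|v_*|^2}\,e^{-\frac18\left(|v-v_*|^2+\frac{(|v|^2-|v_*|^2)^2}{|v-v_*|^2}\right)}\le e^{-a|v|^2}\,e^{-c\left(|v-v_*|^2+\frac{(|v|^2-|v_*|^2)^2}{|v-v_*|^2}\right)}
\end{equation*}
for all $v,v_*\in\R^3$ with $v\ne v_*$. Taking logarithms and writing $r=|v-v_*|>0$ and $s=|v|^2-|v_*|^2$, this is equivalent to the elementary estimate
\begin{equation*}
 \left(\tfrac18-c\right)\!\left(r^2+\frac{s^2}{r^2}\right)-as\ge 0.
\end{equation*}
I would prove it by cases: if $s\le 0$ the left side is a sum of two nonnegative terms (note $c<\tfrac18$); if $s>0$, the AM--GM inequality gives $r^2+\frac{s^2}{r^2}\ge 2s$, so the left side is at least $\big(2(\tfrac18-c)-a\big)s$, which is nonnegative precisely because $c\le\tfrac18-\tfrac a2=\tfrac{1-4a}{8}$. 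This is exactly where the hypothesis $a<\tfrac14$ is used: the two factors $\tfrac18$ in the kernel's exponent, paired by AM--GM, produce the threshold $\tfrac14$.

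Granting this, I would conclude
\begin{equation*}
 |K(h)(v)|\le C\,e^{-a|v|^2}\int_{\R^3}\frac{1}{|v-v_*|}\,e^{-c\left(|v-v_*|^2+\frac{(|v|^2-|v_*|^2)^2}{|v-v_*|^2}\right)}\,dv_*\le C_a\,e^{-a|v|^2},
\end{equation*}
where the remaining integral is controlled by Lemma~\ref{lemma:Cafineq} with $\epsilon=2$ and $a_1=a_2=c$ (the set $\{v_*=v\}$ being negligible, and the quotient taken to be $0$ there). There is no real obstacle in this argument; the only step needing any care is the weight-transferring inequality above and tracking the dependence of $c$, hence of $C_a$, on $a$ — in particular $C_a$ is allowed to blow up as $a\uparrow\tfrac14$.
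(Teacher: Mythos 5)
Your proof is correct, and the weight-transfer inequality (absorbing the difference $e^{-a|v_*|^2}/e^{-a|v|^2}$ into a fraction of the kernel's Gaussian factor via AM--GM, with threshold exactly at $a=\tfrac14$, then invoking Lemma~\ref{lemma:Cafineq}) is precisely the mechanism behind the cited result. The paper itself gives no proof — it defers entirely to \cite{chen14} for the hard-sphere case and asserts the extension to $0\le\gamma<1$ — so your argument is a faithful and complete reconstruction; the only (correct) observation you add is that $(1+|v|+|v_*|)^{-(1-\gamma)}\le 1$ makes $\gamma$ irrelevant to the estimate.
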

\section{Velocity averaging for linearized Boltzmann equation in the whole space} \label{sec:VelAvgLBE}
Lemma~\ref{lemma:ksk}, which was first investigated in \cite{chuang19}, plays an important role in our analysis. For readers' convenience, 
we recapitulate the proof of Lemma~\ref{lemma:ksk} in this section. The idea of Lemma~\ref{lemma:ksk} originated from the 
velocity averaging lemma in \cite{golse88}, which can be proved by the method of Fourier transform. Roughly speaking, the $L^2$ averaging lemma 
can be stated as that if $u\in L^2(\R^n\times\R^n)$ is a solution to the transport equation in $\R^n\times\R^n$ with a source term $f\in L^2(\R^n\times\R^n)$,
\begin{equation}
    u+v\cdot\nabla_x u =f,
\end{equation}
then, for any $\psi\in L^\infty_c (\R^n)$, a bounded and compactly supported function, the velocity average of $u$ satisfies
\begin{equation} \label{eq:L2velavg}
    \int_{\R^n} u(\cdot,v)\psi(v)\,dv\in \tilde{H}^{\sfrac 1 2}(\R^n).
\end{equation}
By analogy with the above result, we consider the transport equation in $\R^3\times\R^3$ with a source term $K(f)\in L^2(\R^3\times\R^3)$,
\begin{equation} \label{eq:Btransport}
    \nu(v)u+v\cdot\nabla_x u =K(f)
\end{equation}
(Notice that $f\in L^2(\R^3\times\R^3)$ implies $K(f)\in L^2(\R^3\times\R^3)$ according to Proposition~\ref{prop:KLp}). Recall from \eqref{eq:SDef}
\begin{equation}
    (Sf)(x,v):=\int^\infty_0 e^{-\nu(v)s}f(x-sv,v)\,ds.
\end{equation}
A direct calculation shows that $S$ is a bounded operator from $L^p(\R^3\times\R^3)$ to $L^p(\R^3\times\R^3)$ for $1\leq p \leq\infty$ and 
the $L^2$ solution of \eqref{eq:Btransport} is expressed as 
\begin{equation}
    u(x,v)=(SK)(f)(x,v).
\end{equation}
If we regard the integral operator $K$ as a kind of velocity averaging, then it is natural to anticipate $K(u)=KSK(f)$ has certain regularity 
in the space variable in view of \eqref{eq:L2velavg}. And it turns out we indeed have Lemma~\ref{lemma:ksk}. In the following proof, 
we adopt the idea of Fourier transform in \cite{golse88} with a careful application of 
Cauchy-Schwarz inequality. Furthermore, we also take advantage of \eqref{eq:nu2} for the function $\nu$ to gain some integrability 
(see \eqref{eq:nusplit}) since the support of $k(v,v_*)$ is not bounded. We also remark that Lemma~\ref{lemma:ksk} holds for any function $\nu$ satisfying 
\eqref{eq:nu2} and any kernel $k$ satisfying \eqref{eq:kl1} and \eqref{eq:kl2}.
\begin{proof}[Proof of Lemma~\ref{lemma:ksk}]
    We denote the Fourier transform in $x$ of a function $h$ by $\F h$, the 
    corresponding variable by $\xi$. Since the operator $K$ does not involve the space vairable, a direct calculation shows that, 
    for $f\in L^1(\R^3\times\R^3)$, we have
    \begin{align}
        \F(Sf)(\xi,v)=&\frac{\F f(\xi,v)}{\nu(v)+i(v\cdot\xi)}, \label{eq:FourierRelation1}\\
        \F(Kf)(\xi,v)=&K(\F f)(\xi,v).  \label{eq:FourierRelation2}
    \end{align}
    For the case where $f\in L^2(\R^3\times\R^3)$, one can approximate $f$ by a sequence of functions in $L^1\cap L^2(\R^3\times\R^3)$ to 
    obtain \eqref{eq:FourierRelation1}-\eqref{eq:FourierRelation2}. To prove Lemma~\ref{lemma:ksk}, it suffices to show the 
    boundedness of integral
    \begin{equation} \label{eq:KSKintegral}
        I:=\int_{\R^3}\int_{\R^3}|\xi|^{2s+1}\left| \F KSKf(\xi,v)  \right|^2\,d\xi dv
    \end{equation}
    for given $f\in L^2_v(\R^3;\tilde{H}^{s}_x(\R^3))$. Using identities \eqref{eq:FourierRelation1}, \eqref{eq:FourierRelation2} and 
    Cauchy-Schwarz inequality yields 
    \begin{equation}\begin{split}
        I&=\int_{\R^3}\int_{\R^3}|\xi|^{2s+1}\left| \int_{\R^3} \F SKf(\xi,v_*)k(v_*,v)\,dv_* \right|^2\,d\xi dv \\
        &=\int_{\R^3}\int_{\R^3}|\xi|^{2s+1}\left| \int_{\R^3} \frac{\F Kf(\xi,v_*)}{\nu(v_*)+i(v_*\cdot\xi)}k(v_*,v)\,dv_* \right|^2\,d\xi dv \\
        &\leq \iint  |\xi|^{2s+1} \left( \int |\F Kf(\xi,v_*)|^2(1+|v_*|)^{3-2\gamma}|k(v_*,v)|\, dv_*  \right) \\
        &\quad \times \left( \int \frac{|k(v_*,v)|}{\bigl(\nu(v_*)^2+(v_*\cdot\xi)^2\bigr)(1+|v_*|)^{3-2\gamma}}\,dv_*\right)d\xi dv. 
    \end{split}\end{equation}
    By Corollary~\ref{cor:k}, we have 
    \begin{equation}\begin{split}
        |\F Kf(\xi,v_*)|^2 =& \left| \int_{\R^3} \F f(\xi,w)k(w,v_*)\,dw   \right|^2 \\
        \leq& \left(\int_{\R^3}|\F f(\xi,w)|^2\,dw\right)\left(\int_{\R^3}|k(w,v_*)|^2 \,dw\right) \\
        \leq& C\, \frac{\int_{\R^3}|\F f(\xi,w)|^2\,dw}{(1+|v_*|)^{3-2\gamma}},
    \end{split}\end{equation}
    which leads to
    \begin{equation}\begin{split}
        \int |\F Kf(\xi,v_*)|^2(1+|v_*|)^{3-2\gamma}|k(v_*,v)|\, dv_* \leq& C\, \iint |\F f(\xi,w)|^2 |k(v_*,v)|\, dwdv_*  \\
        \leq& C\, \int |\F f(\xi,w)|^2 \,dw, 
    \end{split}\end{equation}
    where the second inequality follows from Corollary~\ref{cor:k}. Therefore, it follows that 
    \begin{equation}\begin{split}
        I\leq&C\, \iiiint |\xi|^{2s+1} |\F f(\xi,w)|^2 \frac{|k(v_*,v)|}{\bigl(\nu(v_*)^2+(v_*\cdot\xi)^2\bigr)(1+|v_*|)^{3-2\gamma}}\, dvdv_* d\xi dw \\
        \leq& \iiint |\xi|^{2s+1} |\F f(\xi,w)|^2 \frac{1}{\bigl(\nu(v_*)^2+(v_*\cdot\xi)^2\bigr)(1+|v_*|)^{5-3\gamma}}\, dv_* d\xi dw,
    \end{split}\end{equation}
    where we have used Corollary~\ref{cor:k} again in the last line. We observe that 
    \begin{equation} \label{eq:nusplit}
        \begin{split}
        \frac{1}{\nu(v_*)^2+(v_*\cdot \xi)^2}=& \frac{1}{\bigl(\nu(v_*)^2+(v_*\cdot \xi)^2\bigr)^{2/3}}\cdot\frac{1}{\bigl(\nu(v_*)^2+(v_*\cdot \xi)^2\bigr)^{1/3}} \\
        \leq& \frac{1}{\bigl(\nu_0^2+(v_*\cdot \xi)^2\bigr)^{2/3}} \cdot \frac{1}{\nu(v_*)^{\sfrac 2 3}} \\
        \leq &C\, \frac{1}{\bigl(\nu_0^2+(v_*\cdot \xi)^2\bigr)^{2/3}} \cdot \frac{1}{(1+|v_*|)^{\sfrac{2\gamma}{3}}}.
    \end{split}\end{equation}
    As a result, we have 
    \begin{equation}
        I\leq C \iiint |\xi|^{2s+1} \frac{|\F f(\xi,w)|^2}{\bigl(\nu_0^2+(v_*\cdot \xi)^2\bigr)^{2/3}(1+|v_*|)^{5-\sfrac{7\gamma}{3}}}\,dv_*d\xi dw.
    \end{equation}
    Denote the component of $v_*$ parallel to $\xi$ and the component perpendicular to $\xi$ respectively by 
    \begin{equation} 
        \left\{ 
            \begin{array}{l}
                v_{*\parallel\xi} = v_* \cdot \frac{\xi}{|\xi|} \\
                v_{*\perp\xi} = v_* -(v_{*\parallel\xi}) \frac{\xi}{|\xi|}.
            \end{array} 
            \right.
        \end{equation}
    \noindent Consequently, we deduce
    \begin{equation}\begin{split}
        I\leq& C\, \iiiint |\xi|^{2s+1} \frac{|\F f(\xi,w)|^2}{\bigl(\nu_0^2+|\xi|^2v^2_{*\parallel\xi}\bigr)^{2/3}(1+|v_{*\perp\xi}|)^{5-\sfrac{7\gamma}{3}}}\,dv_{*\perp\xi}d_{*\parallel\xi}d\xi dw \\
        \leq& C\, \iiint |\xi|^{2s+1} \frac{|\F f(\xi,w)|^2}{\bigl(\nu_0^2+|\xi|^2v^2_{*\parallel\xi}\bigr)^{2/3}} \,dv_{*\parallel\xi}d\xi dw \\
        \leq&C\, \iint |\xi|^{2s} |\F f(\xi,w)|^2 \, d\xi dw \\
        \leq&C \| f \|^2_{L^2_v(\R^3;\tilde{H}^{s}_x(\R^3))},
    \end{split}\end{equation}
    where the second inequality follows since $5-\frac{7\gamma}{3} >2$ for $0\leq \gamma \leq 1$ and $v_{*\perp\xi} $ is two-dimensional plane. 
    This completes the proof.
\end{proof}
\section{Geometric properties of bounded convex domains} \label{sec:geometry}
In this section, we introduce some auxiliary geometric results involving bounded convex domains. 
Throughout this section, $\O \subset \R^3$ denotes a $C^2$ bounded strictly convex domain. 
Our strategy is to properly compare convex domain $\O$ with spherical domains in $\R^3$ 
so that we can build up estimates for convex domains based on those for spherical domains. 

Let $n(q)$ denote the unit outward normal of $\partial\O$ at $q\in\partial\O$, and 
$\hat{v}=\frac{v}{|v|}$ denote the unit vector with the direction $v\in\R^3$. We start with several geometric notations we shall 
frequently use. 
\begin{defi}
    Let $\diam \O$ be the diameter of bounded domain $\O$. Namely, we define 
    \begin{equation}
        \diam \O := \sup_{(x_1,x_2)\in\O\times\O}|x_1-x_2|.
    \end{equation}
    For an interior point $x\in \O$, let 
    \begin{equation} \begin{split}
        d_x :&= \dist(x,\partial\Omega) \\
        &=\inf_{q\in \partial\Omega}|x-q|
    \end{split}\end{equation}
    be the distance from $x$ to $\partial \O$. 
    For $x\in\bar\O$ and $v\in\R^3$, we define $\tau_-(x,v)$ to be the backward exit time for $x$ 
    leaving $\O$ with velocity $-v$, and we define $q_-(x,v)$ to be the corresponding point that 
    the backward trajectory touches $\partial\O$. More precisely, 
    \begin{align*}
        \tau_-(x,v)&:=\inf \{ t>0: \, x -tv \notin \Omega \}, \\
        q_-(x,v)&:=x - \tau_-(x,v)v.
    \end{align*}
    Furthermore, the absolute value of the component of velocity $v$ passing through the surface $\partial\O$ at $q_-(x,v)$ is denoted by
    \begin{equation}
        N_-(x,v):= |n(q_-(x,v))\cdot \hat v|.
    \end{equation}
    In a similar fashion, we can define the corresponding forward concept by
    \begin{align*}
    \tau_+(x,v)&:=\inf \{ t>0: \, x +tv \notin \Omega \}, \\
    q_+(x,v)&:=x + \tau_+(x,v)v, \\
    N_+(x,v)&:= |n(q_+(x,v))\cdot \hat v|.
    \end{align*}
\end{defi}

The following two propositions from \cite{chen19} concern estimates for backward trajectory.
\begin{prop} \label{prop:ProjDistance}
    Let $x$ be an interior point of $\O$ and $v\in\R^3$. Then 
    \[
        |x-q_-(x,v)|\geq \frac{d_x}{N_-(x,v)}.
    \]
\end{prop}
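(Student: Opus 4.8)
The statement to prove is Proposition~\ref{prop:ProjDistance}: for an interior point $x\in\O$ and $v\in\R^3$, one has $|x-q_-(x,v)|\geq d_x/N_-(x,v)$. The plan is to reduce this to a single planar picture and then apply the elementary fact that the distance from a point to a plane is at most the distance along any ray emanating from the point. First I would set $q=q_-(x,v)$, the point where the backward trajectory $\{x-tv:\,t>0\}$ first meets $\partial\O$, and let $n=n(q)$ be the unit outward normal there. The supporting hyperplane $H:=\{y\in\R^3:\,(y-q)\cdot n=0\}$ has $\O$ lying entirely on one side of it, by convexity of $\O$; in particular the whole boundary $\partial\O$, hence every point realizing $d_x=\dist(x,\partial\O)$, lies on the closed half-space $\{(y-q)\cdot n\leq 0\}$ side.

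The key step is then the inequality $d_x=\dist(x,\partial\O)\leq \dist(x,H)$, which holds because $H$ separates $x$ from $\O$: more precisely, the segment from $x$ to any boundary point of $\O$ must, if it is to reach the far side, at least reach $H$, so $\dist(x,H)\le\dist(x,\partial\O)$ is the wrong direction — instead the correct observation is that $\O$ is contained in the half-space bounded by $H$ not containing $x$'s outward side, while $x\in\O$, so $x$ itself lies in that half-space and $\dist(x,H)=|(x-q)\cdot n|$. Now $q$ lies on the ray from $x$ in direction $-\hat v$: writing $x-q=\tau_-(x,v)v=|x-q|\,\hat v$, we compute $\dist(x,H)=|(x-q)\cdot n|=|x-q|\,|\hat v\cdot n|=|x-q|\,N_-(x,v)$. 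Combining with $d_x\le \dist(x,H)$ — which I would justify cleanly by noting that the open ball $B(x,d_x)\subset\O$, hence $B(x,d_x)$ lies strictly on the $\O$-side of the supporting plane $H$, forcing $\dist(x,H)\ge d_x$ — gives $d_x\le |x-q|\,N_-(x,v)$, i.e. the claim, after dividing by $N_-(x,v)$ (which is positive since the backward trajectory genuinely exits, so $v$ is not tangent to $\partial\O$ at $q$).

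I should be slightly careful about the degenerate case $N_-(x,v)=0$, where the right-hand side is interpreted as $+\infty$ and the inequality is vacuous; and about $v=0$, which does not define a trajectory and should be excluded or handled by the same convention. The main obstacle — really the only subtlety — is getting the direction of the separating-plane inequality right: one must use that the ball $B(x,d_x)$ is contained in $\O$ and that $H$ is a \emph{supporting} plane for $\O$ at $q$ (so $\O$, and hence $B(x,d_x)$, lies in the closed half-space on one side), which yields $\dist(x,H)\ge d_x$ rather than the reverse. Everything else is the one-line computation $\dist(x,H)=|x-q|\,N_-(x,v)$ using that $x-q$ is parallel to $v$. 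Since $\O$ is assumed $C^2$ and strictly convex throughout this section, the supporting hyperplane at $q\in\partial\O$ exists and is unique (it is the tangent plane), so no extra regularity input is needed beyond what the section already assumes.
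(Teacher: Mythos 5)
Your argument is correct. Since the paper does not reproduce a proof (it cites \cite{chen19} for this proposition), I can only judge the proposal on its own merits: the supporting-hyperplane argument you give is the natural one, and each step checks out. Writing $q=q_-(x,v)$, $n=n(q)$, and $H=\{y:(y-q)\cdot n=0\}$, convexity gives $\O\subset\{(y-q)\cdot n\le 0\}$; the inclusion $B(x,d_x)\subset\O$ then forces $\dist(x,H)=|(x-q)\cdot n|\ge d_x$; and since $x-q=\tau_-(x,v)\,v$ is parallel to $v$, $|(x-q)\cdot n|=|x-q|\,N_-(x,v)$, which yields the claim.

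Two minor remarks. First, the middle of your write-up briefly heads in the wrong direction (``so $\dist(x,H)\le\dist(x,\partial\O)$ is the wrong direction'') before self-correcting; in a polished version you should excise that detour and go straight to the $B(x,d_x)\subset\O$ argument. Second, the degenerate case $N_-(x,v)=0$ you flag cannot actually occur here: for a strictly convex domain the tangent plane at $q$ meets $\bar\O$ only at $q$, so if $v\cdot n(q)=0$ the line through $q$ in direction $v$ lies in that tangent plane and hence cannot contain the interior point $x=q+\tau_-(x,v)v$. So for $x\in\O$ and $v\neq 0$ one automatically has $N_-(x,v)>0$, and the only genuine degeneracy to exclude is $v=0$, for which $\tau_-$ and $q_-$ are undefined.
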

\begin{prop} \label{prop:ProjDistance2}
    Let $x,y$ be interior points of $\O$ and $v\in\R^3$. If \\
    $|x-q_-(x,v)|\leq |y-q_-(y,v)|$, then
    \begin{align}
        &|q_-(x,v)-q_-(y,v)| \leq \frac{|x-y|}{N_-(x,v)}, \\
        &\biggl||x-q_-(x,v)|-|y-q_-(y,v)| \biggr| \leq \frac{2|x-y|}{N_-(x,v)}.
    \end{align}
\end{prop}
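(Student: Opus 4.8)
\medskip
\noindent\textbf{Proof proposal.}
The plan is to base everything on one object: the supporting hyperplane of $\O$ at the point $q_-(x,v)$. Abbreviate $q_x:=q_-(x,v)$, $q_y:=q_-(y,v)$, $\hat v:=v/|v|$ and $N:=N_-(x,v)=|n(q_x)\cdot\hat v|$; since a line through an interior point of a convex body is never tangent to the boundary, $N>0$, so the right-hand sides make sense. Because the backward ray $x-tv$ leaves $\O$ exactly at $q_x$, the direction $-\hat v$ points out of $\bar\O$ at $q_x$, hence $\hat v\cdot n(q_x)=-N$; writing $n:=n(q_x)$, convexity of $\O$ gives $\bar\O\subseteq\{z\in\R^3:(z-q_x)\cdot n\le 0\}$. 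Finally, from $q_x=x-\tau_-(x,v)v$, $q_y=y-\tau_-(y,v)v$ and $\tau_-(\cdot,v)|v|=|\cdot-q_-(\cdot,v)|$ I would record the elementary identity
\begin{equation*}
q_x-q_y=(x-y)+L\hat v,\qquad L:=|y-q_y|-|x-q_x|\ge 0,
\end{equation*}
where $L\ge 0$ is precisely the standing hypothesis $|x-q_x|\le|y-q_y|$.

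For the second inequality I would take the inner product of this identity with $n$. Using $\hat v\cdot n=-N$ and $(q_x-q_y)\cdot n\ge 0$ (valid since $q_y\in\partial\O\subseteq\bar\O$), one gets $LN=(x-y)\cdot n-(q_x-q_y)\cdot n\le(x-y)\cdot n\le|x-y|$, so $\big||x-q_x|-|y-q_y|\big|=L\le|x-y|/N$; this is in fact sharper than the stated bound with constant $2$.

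For the first inequality I would decompose $q_x-q_y=a\hat v+w$ with $w\perp\hat v$, so that $a=(x-y)\cdot\hat v+L$, $w=(x-y)_{\perp v}$, $b:=|w|\le|x-y|$, and $|q_x-q_y|^2=a^2+b^2$. Split on the sign of $a$. If $a\le 0$, then $(x-y)\cdot\hat v=a-L\le a\le 0$, so $|(x-y)\cdot\hat v|\ge|a|$, hence $|x-y|^2\ge a^2+b^2=|q_x-q_y|^2$ and the claim follows because $0<N\le 1$. If $a>0$, I would again dot $q_x-q_y=a\hat v+w$ with $n$: since the part of $n$ orthogonal to $\hat v$ has length $\sqrt{1-N^2}$, one has $|w\cdot n|\le b\sqrt{1-N^2}$, and $(q_x-q_y)\cdot n\ge 0$ then forces $0\le -aN+b\sqrt{1-N^2}$, i.e. $aN\le b\sqrt{1-N^2}$. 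Therefore $|q_x-q_y|^2=a^2+b^2\le b^2(1-N^2)/N^2+b^2=b^2/N^2\le|x-y|^2/N^2$, which is the assertion.

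The only genuine subtlety — and what I expect to be the main obstacle — is the sign split on $a=(q_x-q_y)\cdot\hat v$: a single projection onto $n(q_x)$ is too wasteful when $q_y$ lies ``ahead of'' $q_x$ along $v$ (the case $a\le 0$), and there one instead exploits that $|x-y|$ already dominates $|q_x-q_y|$ outright, while in the complementary case the supporting-hyperplane estimate is tight (equality for spherical domains). Everything else is bookkeeping, modulo the routine remarks that $N_-(x,v)>0$ for interior $x$ and that $-\hat v$ is an exterior direction at $q_x$. An alternative, closer to the spirit of Section~\ref{sec:geometry}, would be to pass to the $2$D section of $\O$ by the plane through $x$ and $y$ with direction $v$ and compare with a disk; the hyperplane argument above sidesteps that and yields the sharp constant directly.
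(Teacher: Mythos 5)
The paper does not actually prove Proposition~\ref{prop:ProjDistance2}; it is quoted from \cite{chen19} without an in-text argument, so there is nothing local to compare against. Your proof is correct. The identity $q_-(x,v)-q_-(y,v)=(x-y)+L\hat v$ with $L:=|y-q_-(y,v)|-|x-q_-(x,v)|\ge 0$ follows directly from $q_-(\cdot,v)=\cdot-\tau_-(\cdot,v)v$ and $\tau_-(\cdot,v)|v|=|\cdot-q_-(\cdot,v)|$; the supporting-hyperplane inequality $(q_-(y,v)-q_-(x,v))\cdot n(q_-(x,v))\le 0$ holds for any convex domain; and both consequences check out: pairing the identity against $n(q_-(x,v))$ gives $L N_-(x,v)\le (x-y)\cdot n(q_-(x,v))\le|x-y|$, and the split on the sign of $a=(q_-(x,v)-q_-(y,v))\cdot\hat v$ handles the first estimate, with the $a\le 0$ branch already giving $|q_-(x,v)-q_-(y,v)|\le|x-y|$ outright and the $a>0$ branch giving $aN_-(x,v)\le|w|\sqrt{1-N_-(x,v)^2}$ and hence $a^2+|w|^2\le|w|^2/N_-(x,v)^2\le|x-y|^2/N_-(x,v)^2$. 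The sign split is genuinely needed, as you observe: the hyperplane bound on $a$ is one-sided. Two small remarks. First, your derivation of the second inequality yields the sharper constant $1$; the constant $2$ in the statement is what one gets by instead deducing the second bound from the first via $L=|L\hat v|\le|q_-(x,v)-q_-(y,v)|+|x-y|\le 2|x-y|/N_-(x,v)$, which is presumably the route taken in \cite{chen19}. Second, the positivity $N_-(x,v)>0$ for interior $x$ follows for any bounded convex domain (not just a strictly convex one) from $(x-q_-(x,v))\cdot n(q_-(x,v))<0$, exactly as you argue; strict convexity is not needed at this point.
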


\begin{defi} \label{defi:comparison}
    We say a bounded convex domain $\O$ satisfies uniform interior sphere condition (resp. uniform sphere-enclosing condition) 
    if there is a constant $r_1=r_1(\O)>0$ (resp. $R_1=R_1(\O)>0$) such that 
    for any boundary point $q\in\partial\Omega$, there is a sphere $S_i(q)=\partial B_{r_1}(y)$ (resp. $S_o(q)=\partial B_{R_1}(y')$) 
    such that $B_{r_1}(y)\subset\O$ (resp. $\O\subset B_{R_1}(y')$) with $q\in  S_i(q)$ (resp. $q\in S_o(q)$). (See Figure~\ref{fig:uniformsphere}.)
\begin{figure}[!htb]
    \centering
    \includegraphics[scale=0.5]{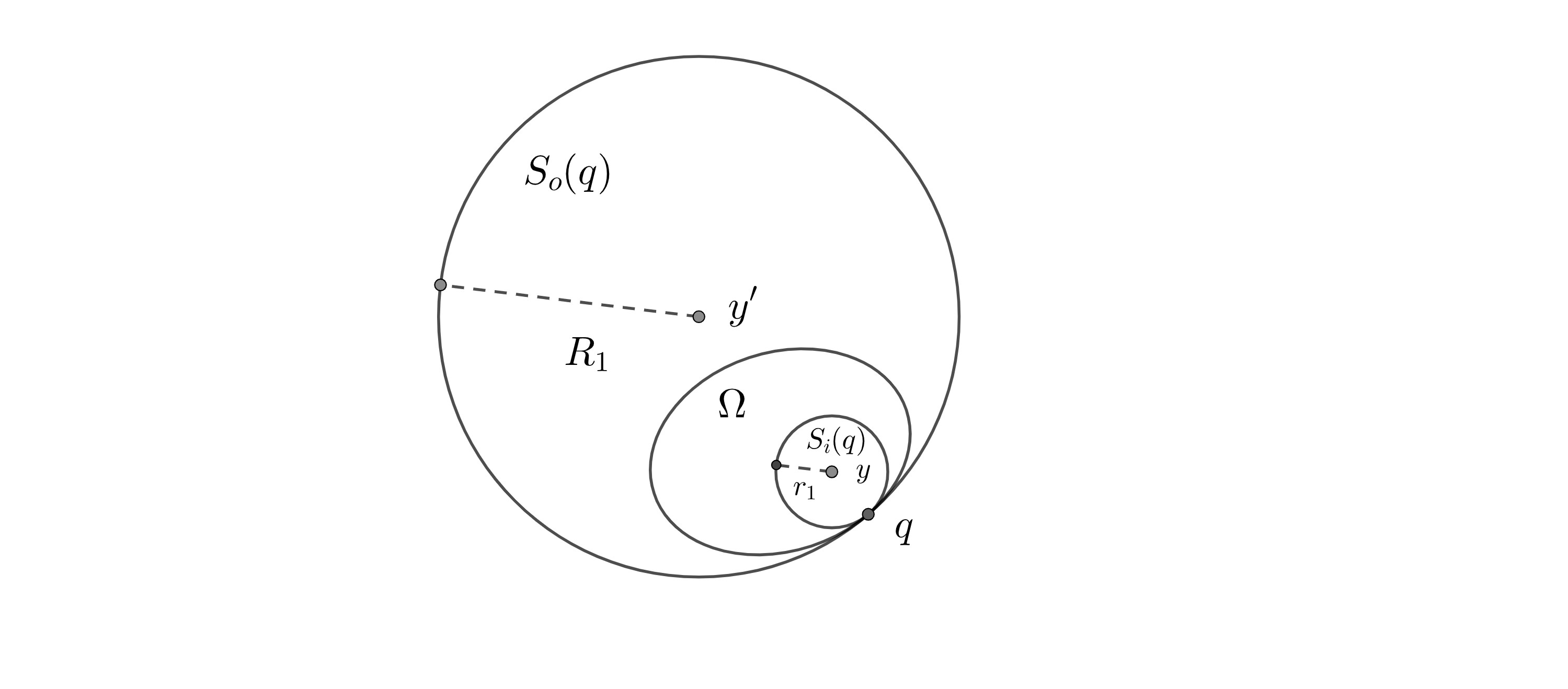}
    \caption{}
    \label{fig:uniformsphere}
\end{figure}
\end{defi}
\begin{prop} \label{prop:curvature}
    Let $\mathcal{D}\subset \R^2$ be a disk of radius $r$ and $A,B$ two points on $\partial\mathcal{D}$. Suppose there is a regular parametrized curve $\alpha(s)$
    connecting $\alpha(s_0)=A$ to $\alpha(s_1)=B$ contained in the 
    circular segment bounded by chord $\overline{AB}$ and minor arc $\stackrel\frown{AB}$. We further assume that $\alpha(s)$ is convex, 
    i.e. the curvature $k(s)$ at $\alpha(s)$ is positive everywhere, $\alpha(s)$ is tangent to $\mathcal{D}$ at $A$, and 
    $\alpha(s)$ leaves $\mathcal{D}$ at $B$ as Figure~\ref{fig:curvatureprop} shows. Then, there exists $s_*\in(s_0,s_1)$ such that 
    $k(s_*)\leq\frac{1}{r}$.
\begin{figure}[!htb]
    \centering
    \includegraphics[scale=0.8]{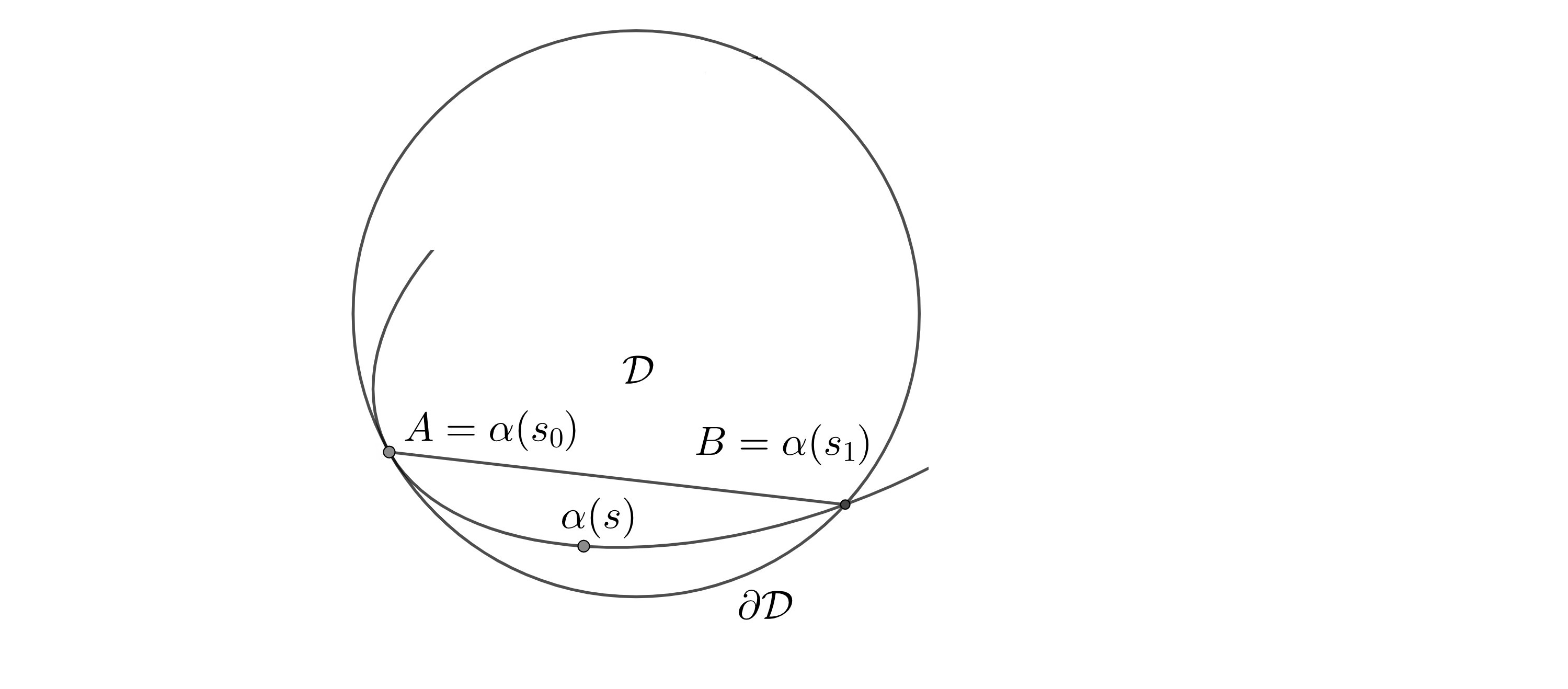}
    \caption{}
    \label{fig:curvatureprop}
\end{figure}
\end{prop}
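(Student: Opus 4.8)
The plan is to argue by contradiction: suppose $k(s)>\tfrac1r$ for every $s\in(s_0,s_1)$ and show that $\alpha$ then cannot leave $\mathcal{D}$ at $B$. After a rigid motion I would place the centre $O$ of $\mathcal{D}$ at the origin with the perpendicular bisector of the chord $\overline{AB}$ along the $y$-axis and the circular segment in the upper half-plane. Writing $2\beta=\angle AOB\in(0,\pi)$ (so $\beta\in(0,\tfrac\pi2)$ for a proper minor arc), we get $A=r(-\sin\beta,\cos\beta)$, $B=r(\sin\beta,\cos\beta)$ and $|B-A|=B_x-A_x=2r\sin\beta$. Parametrize $\alpha$ by arc length and let $\phi(s)$ be the angle of the unit tangent, $\alpha'(s)=(\cos\phi(s),\sin\phi(s))$. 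Since $\alpha$ is tangent to $\partial\mathcal{D}$ at $A$ and immediately enters $\mathcal{D}$, its curvature vector at $A$ points toward $O$, so $\phi(s_0)=\beta$ and $\phi'(s_0)<0$; because $\alpha$ is convex ($|\phi'|=k>0$ with a fixed sign), $\phi$ is \emph{strictly decreasing} on $[s_0,s_1]$ with $\phi'=-k$.

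Next I would record the two geometric constraints. (i) The extreme points of the circular segment in the direction of the chord $\overline{AB}$ are exactly $A$ (leftmost) and $B$ (rightmost); since $\alpha$ stays in the segment, is convex, and leaves $\mathcal{D}$ at $B$, it is a graph over the chord, i.e. $\phi(s)\in(-\tfrac\pi2,\tfrac\pi2)$ for all $s$, so $\cos\phi(s)>0$ throughout. (ii) Since $\alpha(s)\in\overline{\mathcal{D}}$ for $s<s_1$ and $\alpha$ leaves $\mathcal{D}$ at $B$, the tangent at $B$ points outward or along $\partial\mathcal{D}$, i.e. $\alpha'(s_1)\cdot n(B)=\sin(\phi(s_1)+\beta)\ge0$, which together with $\phi(s_1)\in(-\tfrac\pi2,\tfrac\pi2)$ forces $\phi(s_1)\ge-\beta$.

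The heart of the argument is a one-line comparison after using the tangent angle as parameter (legitimate since $\phi$ is strictly monotone). From $d\alpha/d\phi=\alpha'/\phi'=-(\cos\phi,\sin\phi)/k(\phi)$ and integration of the first component from $\phi=\beta$ down to $\phi_1:=\phi(s_1)$,
\[
2r\sin\beta \;=\; B_x-A_x \;=\; \int_{\phi_1}^{\beta}\frac{\cos\phi}{k(\phi)}\,d\phi .
\]
On $(\phi_1,\beta)\subset(-\tfrac\pi2,\tfrac\pi2)$ we have $\cos\phi>0$ and, by the contradiction hypothesis, $\tfrac1{k(\phi)}<r$, hence
\[
2r\sin\beta \;<\; r\int_{\phi_1}^{\beta}\cos\phi\,d\phi \;=\; r\bigl(\sin\beta-\sin\phi_1\bigr),
\]
so $\sin\phi_1<-\sin\beta$, and since $\phi_1,-\beta\in(-\tfrac\pi2,\tfrac\pi2)$ where $\sin$ is increasing, $\phi_1<-\beta$. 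This contradicts constraint (ii), so some $s_*\in(s_0,s_1)$ must satisfy $k(s_*)\le\tfrac1r$.

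I expect the genuine obstacle to be the verification of constraint (i): that the tangent angle of $\alpha$ never escapes $(-\tfrac\pi2,\tfrac\pi2)$, i.e. $\alpha$ is truly a graph over $\overline{AB}$. This is visually obvious from Figure~\ref{fig:curvatureprop}, but a rigorous proof must exclude the curve ``spiralling'' (total turning $\ge\pi$) inside the segment; this can be handled by combining the strict monotonicity of $\phi$ with the fact that the $\overline{AB}$-coordinate of $\alpha$ attains its maximum only at $B$, so that once $\phi$ reached $-\tfrac\pi2$ the curve could never return to $x=B_x$ other than at $B$, forcing $\phi(s_1)=-\tfrac\pi2$, which is incompatible with leaving $\mathcal{D}$ at $B$. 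The remaining ingredients (the coordinate normalization, the value $\phi(s_0)=\beta$, the change of variables, and the final trigonometric inequality) are routine; note also that we only used $k>\tfrac1r$ on the open interval, and since $\cos\phi>0$ there the displayed inequality is already strict, so nothing further is needed at the endpoints.
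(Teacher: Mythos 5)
Your proposal follows essentially the same argument as the paper's proof: parametrize by arc length, express the chord length $|A-B|$ as $\int\cos\theta\,ds$, change variables to the tangent angle using $\left|\tfrac{ds}{d\theta}\right|=\tfrac1{k}<r$ under the contradiction hypothesis, and extract a trigonometric contradiction. The only differences are presentational — you derive the endpoint constraint $\phi(s_1)\ge -\beta$ explicitly from the outward-pointing tangent condition $\alpha'(s_1)\cdot n(B)\ge 0$ (whereas the paper simply asserts $\tfrac{\pi}{2}\ge|\theta_0|\ge\theta_1>0$), and you flag the need to justify $\cos\phi>0$ throughout, a point the paper leaves implicit.
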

\begin{proof}
    We may assume $\alpha(s)$ is parametrized by arc length. Let $e_1$ denote the vector $\frac{\overrightarrow{AB}}{|A-B|}$. We choose $e_2\in \S^1$ such that 
    $\{ e_1,e_2 \}$ forms a positively oriented orthonormal basis. 
    Denote the signed angle from vector $e_1$ to tangent vector $\alpha'(s)$ by $\theta(s)$. Therefore, 
    $|A-B|=2r |\sin\theta(s_0)|$. On the other hand, since $\alpha'(s)=\cos\theta(s)e_1+\sin\theta(s)e_2$, we obtain 
    \begin{equation}
        2r |\sin\theta(s_0)|=|A-B|=\int^{s_1}_{s_0} \alpha'(s)\cdot e_1 \,ds=\int^{s_1}_{s_0} \cos\theta(s) \, ds.
    \end{equation}
    Suppose $k(s)> \frac{1}{r}$ for any $s\in(s_0,s_1)$. Then, for any $s\in(s_0,s_1)$,
    \begin{equation}
        \frac{d\theta(s)}{ds}=k(s)> \frac{1}{r},
    \end{equation}
    or
    \begin{equation}
        \frac{ds(\theta)}{d\theta} < r.
    \end{equation}
    Hence, we have 
    \begin{equation}\begin{split}
        2r |\sin\theta(s_0)| =& \int^{s_1}_{s_0} \cos\theta(s) \, ds \\
        =&\int^{\theta_1}_{\theta_0} \cos\theta \frac{ds(\theta)}{d\theta}\,d\theta \\
        <&r\, \int^{\theta_1}_{\theta_0} \cos\theta \,d\theta \\
        =&r\,(\sin\theta_1-\sin\theta_0),
    \end{split}\end{equation}
    where $\theta_0=\theta(s_0)$ and $\theta_1=\theta(s_1)$. Since $\frac{\pi}{2}\geq |\theta_0| \geq\theta_1 >0$, 
    we notice that 
    \begin{equation}
        r\,(\sin\theta_1-\sin\theta_0) \leq 2r |\sin\theta_0|,
    \end{equation}
    which leads to a contradiction.
\end{proof}
\begin{prop} \label{prop:comparison}
    If $\O$ satisfies the positive curvature condition in Definition~\ref{defi:PositiveCurvature}, then it also satisfies both uniform interior sphere condition and uniform sphere-enclosing condition. 
\end{prop}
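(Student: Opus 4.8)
The plan is to prove the two conditions separately, using curvature bounds coming from the positive curvature condition together with compactness of $\partial\O$.

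For the uniform interior sphere condition, I would argue as follows. Since $\partial\O$ is $C^2$ with everywhere positive Gaussian curvature and $\partial\O$ is compact, the principal curvatures of $\partial\O$ are bounded above by some constant $\kappa_{\max}<\infty$; set $r_1=1/\kappa_{\max}$. Fix $q\in\partial\O$ and let $y=q-r_1 n(q)$, the center of the candidate interior ball $B_{r_1}(y)$. I must show $B_{r_1}(y)\subset\O$. Suppose not; then $\overline{B_{r_1}(y)}$ meets $\partial\O$ at some point $p\ne q$, and we may take $p$ to be a first point of contact as we inflate the ball from radius $0$ to $r_1$ about $y$ (equivalently, $p$ realizes $\dist(y,\partial\O)=:\rho\le r_1$). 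At such a contact point the sphere $\partial B_\rho(y)$ is internally tangent to $\partial\O$ from inside $\O$, so by the standard comparison of second fundamental forms, every normal curvature of $\partial\O$ at $p$ is at least $1/\rho\ge 1/r_1=\kappa_{\max}$. Combined with the definition of $\kappa_{\max}$ this forces every normal curvature at $p$ to equal exactly $\kappa_{\max}$ and $\rho=r_1$, so $\partial B_{r_1}(y)$ is tangent to $\partial\O$ at both $q$ and $p$; a short geometric argument (two spheres of equal radius internally tangent to a convex surface at two distinct points, forcing the surface to bulge outward past the chord) together with strict convexity rules this out, so in fact $\rho>r_1$ is impossible only if $p$ does not exist, i.e. $B_{r_1}(y)\subset\O$. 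Alternatively, and more cleanly, one can reduce to the planar statement already established: intersect everything with a plane through $y$, $q$ and a hypothetical bad point, apply Proposition~\ref{prop:curvature} to the boundary curve of the planar slice lying in the relevant circular segment, and extract a point where the curvature of the slice curve is $\le 1/r_1$, contradicting that normal curvatures (hence curvatures of normal slices, and by Meusnier's theorem all slice curvatures) are bounded below by $\kappa_{\max}=1/r_1$ wherever $\partial\O$ is tangent to the slicing disk. I expect this reduction to Proposition~\ref{prop:curvature} to be the intended route and the main technical point is setting up the planar slice so that the hypotheses of Proposition~\ref{prop:curvature} (tangency at $A$, exit at $B$, the curve trapped in the circular segment) are genuinely met.

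For the uniform sphere-enclosing condition, the argument is the mirror image. Positive Gaussian curvature and compactness give a positive lower bound $\kappa_{\min}>0$ on the principal curvatures; set $R_1=1/\kappa_{\min}$. Fix $q\in\partial\O$ and let $y'=q-R_1 n(q)$. I claim $\O\subset B_{R_1}(y')$. If not, there is a point of $\partial\O$ at distance $>R_1$ from $y'$; shrinking a large ball about $y'$ down to one that still contains $\O$, its boundary sphere touches $\partial\O$ at a point $p$ where now $\partial\O$ lies inside the sphere, so every normal curvature of $\partial\O$ at $p$ is at most $1/R'$ where $R'>R_1$ is that sphere's radius, i.e. $\le 1/R_1=\kappa_{\min}$; this is only consistent with $\kappa_{\min}$ being the minimum if equality holds throughout, and then tangency at the two distinct points $q,p$ again contradicts strict convexity. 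Again one can phrase this via a planar slice and a version of Proposition~\ref{prop:curvature} with the roles of inside and outside exchanged.

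The main obstacle, and the part worth writing carefully, is the two-point tangency/equality case: bounding curvatures at a single contact point is routine, but to get a \emph{strict} inclusion $B_{r_1}(y)\subset\O$ (rather than merely $\overline{B_{r_1}(y)}\subset\overline\O$ with possible boundary contact away from $q$) one must exploit strict convexity — guaranteed here because positive curvature implies uniform convexity, as noted in the remark after Definition~\ref{defi:PositiveCurvature} — to rule out a second tangency point. I would handle this by noting that if $\partial B_{r_1}(y)$ were tangent to $\partial\O$ at $q$ and also met $\partial\O$ at another point, convexity of $\O$ would force the segment of $\partial B_{r_1}(y)$ between those points to lie in $\overline\O$, and then Proposition~\ref{prop:curvature} applied in an appropriate plane produces a boundary point where $\partial\O$'s curvature drops to $1/r_1$ or below, contradicting that it was chosen to be the reciprocal of the strict supremum of principal curvatures (with the supremum attained, by compactness, only at genuine maxima). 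A symmetric remark disposes of the enclosing sphere.
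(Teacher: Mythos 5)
Your curvature comparison points the wrong way in both halves, and this is a genuine gap rather than a cosmetic slip. If a ball $B_\rho(y)\subset\O$ is tangent to $\partial\O$ at $p$, then locally $\partial\O$ lies \emph{outside} the sphere, so the normal curvatures of $\partial\O$ at $p$ are \emph{at most} $1/\rho$, not at least. (Test on a small ball inside a larger ball: the outer boundary curves more gently.) Since $\rho\le r_1=1/\kappa_{\max}$ gives $1/\rho\ge\kappa_{\max}$, the corrected bound $k_i(p)\le 1/\rho$ is perfectly compatible with $k_i(p)\le\kappa_{\max}$, so the inflation argument produces no contradiction. Symmetrically, if $\O\subset B_{R'}(y')$ with tangency at $p$, then $\partial\O$ lies locally \emph{inside} the sphere and its normal curvatures at $p$ are \emph{at least} $1/R'$, not at most; since $R'>R_1$ gives $1/R'<\kappa_{\min}$, again nothing contradicts the curvature bound. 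With the signs corrected, a single contact comparison does not close either half: what you are implicitly reaching for in the interior case is Blaschke's rolling theorem, which requires a more elaborate argument than one tangency.

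The paper sidesteps both difficulties. For the interior sphere condition it simply invokes the tubular neighborhood theorem for the compact $C^2$ surface $\partial\O$, taking $r_1=\epsilon/2$; this is not the optimal radius but is uniform and suffices. For the enclosing condition it fixes $R_1=1/k_0$ with $k_0$ \emph{strictly} below the minimum principal curvature, uses the Taylor expansion \eqref{eq:TaylorExpansion} to see that $\partial\O$ is locally inside $\overline{B_{R_1}(y')}$ near $q$, and then, if $\O\not\subset B_{R_1}(y')$, slices by the plane through $q$, $y'$ and an exit point $q_1\in\partial\O\cap\partial B_{R_1}(y')$; the resulting plane curve satisfies the hypotheses of Proposition~\ref{prop:curvature}, which gives a point where the slice curvature, and hence by Meusnier the normal curvature, is $\le 1/R_1=k_0<\min_q k_1(q)$, a contradiction. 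You correctly identify the planar-slice reduction as the right route for the enclosing condition, but note that Proposition~\ref{prop:curvature} is stated for the curve trapped \emph{inside} the circular segment, which is precisely the enclosing scenario; it does not apply directly to the interior case, where the curve would be outside the disk and the inequality would need to flip.
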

\begin{proof}
    Since $\partial\O$ is $C^2$ and compact, there is a tubular neighborhood near $\partial\O$. That is, 
    there exists a number $\epsilon >0$ such that whenever $q_1,q_2\in \partial\O$ the segments of the 
    normal lines of length $2\epsilon$, centered at $q_1$ and $q_2$, are disjoint. This fact can be found in \cite{do16}, for example. 
    Let $r_1=\frac{\epsilon}{2}$. For a given point $q\in\partial\O$, therefore we can consider the sphere $B_{r_1}(y)\subset\O$ such that $q\in\partial B_{r_1}(y)$. 
    Clearly, $S_i(q)=\partial B_{r_1}(y)$ satisfies the desired property. 
    
    Concerning uniform sphere-enclosing condition, we shall proceed the proof by contradiction. 
    We denote the principal curvatures of $\partial\O$ at $q$ by $k_1(q)$ and $k_2(q)$ ($k_1(q)\leq k_2(q)$). By positive curvature property of $\partial\O$, 
    we can choose $k_0$ such that $0<k_0<\min_{q\in\partial\O}k_1(q)$. We claim that $R_1= \frac{1}{k_0}$ is a desired radius for the uniform sphere-enclosing condition. 
    For a given point $q\in\partial\O$, there is $y'$ such that $\overrightarrow{y'q} // n(q)$ and $|y'-q|=R_1$. Under suitable rotations and 
    translations, $\partial\O$ can be locally expressed as the graph of a function $z=h(x,y)$ at $q=(0,0,0)$, where 
    \begin{equation} \label{eq:TaylorExpansion}
        h(x,y)=\frac{1}{2}(k_1(q)x^2+k_2(q)y^2)+o(x^2+y^2).
    \end{equation}
    Since $B_{R_1}(y')$ has constant normal curvature $k_0$, $\overline{B_{R_1}(y')}$ contains a neighborhood of $q$ in $\partial\O$ by \eqref{eq:TaylorExpansion}. 
    Suppose $\O\not\subset B_{R_1}(y')$, then there exists $q_1\in\partial\O \cap \partial B_{R_1}(y')$ such that the plane 
    $E$ containing $q,y',q_1$ has the intersection curve with $\partial\O$ satisfies the condition of 
    Proposition~\ref{prop:curvature} with $A=q$, $B=q_1$, and $r=R_1$. Then Proposition~\ref{prop:curvature} implies that 
    there is a point $q_*\in\partial\O$ such that $k(s_*)\leq\frac{1}{R_1}$, where $q_*=\alpha(s_*)$ under the notation from Proposition~\ref{prop:curvature}. 
    Hence, we note that the normal curvature $k_n(q_*)$ of $\partial\O$ at $q_*$ satisfies
    \begin{equation}
        k_n(q_*)\leq k(q_*)\leq \frac{1}{R_1} = k_0,
    \end{equation}
    which is a contradiction. Therefore, we define $S_o(q)=\partial B_{R_1}(y')$. This completes the proof.
\end{proof}
\begin{rmk}
The above proof shows that $\frac{1}{\min\limits_{q\in\partial\O}k_1(q)-\epsilon}$ is a uniform radius for sphere-enclosing condition for 
every small $\epsilon>0$. Letting $\epsilon\to 0^+$, we obtain the smallest and optimal radius $R_1 =\frac{1}{\min\limits_{q\in\partial\O}k_1(q)}$.
\end{rmk}
The following lemma in \cite{chen19} is useful in our estimates.
\begin{lem} \label{lemma:SurfaceIntegral}
    Suppose $\Omega$ satisfies the positive curvature condition in Definition~\ref{defi:PositiveCurvature}. Then there exists a constant $C=C(\O)$ such that, 
    for any interior point $x\in\Omega$, we have 
    \begin{equation}
        \int_{\partial\Omega}\frac 1 {|x-q|^2}\,d\Sigma(q)\leq C(|\log(d_x)|+1),
    \end{equation}
        where $\Sigma(q)$ is the surface element of $\partial\Omega$ at point $q\in\partial\Omega$.
\end{lem}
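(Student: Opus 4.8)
The plan is to exploit the uniform sphere-enclosing condition from Proposition~\ref{prop:comparison}: since $\O$ satisfies the positive curvature condition, there is a radius $R_1=R_1(\O)$ so that through every $q\in\partial\O$ there passes an enclosing sphere $S_o(q)=\partial B_{R_1}(y')$ with $\O\subset B_{R_1}(y')$ and $q\in S_o(q)$. The key geometric observation is that for a fixed interior point $x$, the surface $\partial\O$ can be compared to a sphere of radius $R_1$ seen from $x$, so that the singular integral $\int_{\partial\O}|x-q|^{-2}\,d\Sigma(q)$ is controlled by the corresponding integral over a sphere, where the computation is explicit. First I would reduce to the local picture near the point $q_0\in\partial\O$ realizing $d_x=|x-q_0|$, writing $\partial\O$ (after rotation) as a graph $z=h(x',y')$ over its tangent plane at $q_0$ with $h(0,0)=0$, $\nabla h(0,0)=0$, and $x=(0,0,d_x)$ on the inward normal. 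On the tangent plane coordinates $u=(x',y')$ the surface measure is $d\Sigma=\sqrt{1+|\nabla h|^2}\,du$ and $|x-q|^2=|u|^2+(d_x-h(u))^2\geq |u|^2$, but this crude bound gives a divergent integral over large $|u|$, so the enclosing sphere must be used to bound $|x-q|$ from below away from $q_0$.

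The second, and main, step is the comparison estimate: I claim there is a constant $c=c(\O)>0$ such that $|x-q|\geq c(|u|+d_x)$ for all $q\in\partial\O$, where $u$ is the tangent-plane projection of $q-q_0$. This follows from uniform convexity: the enclosing sphere $S_o(q_0)$ through $q_0$ lies outside $\O$, so $\partial\O$ lies on the inner side, hence for $q\in\partial\O$ the point $q$ is at least as far from $x$ as the chord geometry of a sphere of radius $R_1$ through $q_0$ containing $x$ on the inward normal at distance $d_x$ dictates; an elementary estimate on the sphere gives $|x-q|\gtrsim |q-q_0|$ when $|q-q_0|$ is bounded below, while the interior sphere condition (radius $r_1$) handles the near regime by giving $h(u)\leq |u|^2/(2r_1)$ so that $|x-q|^2=|u|^2+(d_x-h(u))^2\geq\frac12|u|^2+\frac12 d_x^2$ once $|u|$ is small. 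Patching the two regimes yields $|x-q|^2\geq c'(|u|^2+d_x^2)$ for $|u|$ up to a fixed size $\rho_0=\rho_0(\O)$, and $|x-q|\geq c''$ uniformly for the rest of $\partial\O$ (which contributes a bounded amount since $\diam\O<\infty$).

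With the bound in hand I would finish by direct integration:
\begin{equation*}
\int_{\partial\O}\frac{1}{|x-q|^2}\,d\Sigma(q)\leq C\int_{\{|u|\leq\rho_0\}}\frac{du}{|u|^2+d_x^2}+C\leq C\int_0^{\rho_0}\frac{r\,dr}{r^2+d_x^2}+C = \frac{C}{2}\log\!\left(\frac{\rho_0^2+d_x^2}{d_x^2}\right)+C,
\end{equation*}
where I used that $\sqrt{1+|\nabla h|^2}$ is bounded on the $C^2$ surface and polar coordinates $u=r(\cos\phi,\sin\phi)$ in the tangent plane. Since $\rho_0$ depends only on $\O$ and $d_x\leq\diam\O$, the right-hand side is bounded by $C(|\log d_x|+1)$, which is the claim. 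The delicate point, and the one I expect to require the most care, is establishing the lower bound $|x-q|\gtrsim|u|+d_x$ uniformly in $x$ and $q$ — that is where the positive curvature hypothesis is genuinely used, via the two-sided sphere comparison of Proposition~\ref{prop:comparison}, rather than mere convexity; everything else is routine.
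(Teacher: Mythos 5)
The paper does not prove this lemma; it is quoted directly from Chen--Hsia--Kawagoe \cite{chen19}, so there is no in-paper proof to compare against. Evaluated on its own terms, your plan --- graph coordinates at the nearest boundary point $q_0$, a uniform lower bound $|x-q|\gtrsim |u|+d_x$ in a fixed-size chart around $q_0$, a trivial positive lower bound on $|x-q|$ for $q$ outside that chart, then a polar-coordinate logarithmic integral --- is the standard and correct way to get this estimate. The structure works.

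Two details need tightening, though neither is fatal. First, the near-regime bound $|x-q|^2 \ge \frac12(|u|^2 + d_x^2)$ does not follow from $h(u)\lesssim |u|^2/r_1$ by a one-line expansion, because $(d_x - h(u))^2$ can collapse to zero when $h(u)$ is comparable to $d_x$. You need a case split: if $h(u)\le d_x/2$ then $(d_x-h(u))^2 \ge d_x^2/4$, so $|x-q|^2 \ge |u|^2 + d_x^2/4$; if $h(u) > d_x/2$ then $|u|^2 \gtrsim r_1 d_x$ forces $d_x \lesssim |u|$ once $|u|\le \rho_0 \lesssim r_1$, so $|x-q|^2 \ge |u|^2 \gtrsim |u|^2 + d_x^2$. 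Either branch gives a constant depending only on $r_1$, so the claim survives (with a constant like $1/4$ rather than $1/2$). Second, for the far regime you should first dispose of the case $d_x \ge \rho_0/2$, where the whole integral is trivially $\le |\partial\O|\,d_x^{-2} \le 4|\partial\O|/\rho_0^2$; only then does the triangle inequality $|x-q| \ge |q-q_0| - d_x \ge \rho_0/2$ give a uniform lower bound on the rest of $\partial\O$. Finally, the uniform sphere-enclosing condition you invoke is not actually used anywhere in the argument: the interior sphere condition (which yields $0 \le h(u) \lesssim |u|^2$) together with compactness of $\partial\O$ is all that is needed, which actually makes the proof a bit cleaner than you sketched. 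With those points addressed, the proof is sound.
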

The following proposition concerns an estimate for chords in a bounded convex domain.
\begin{prop} \label{prop:ChordBound}
For a given bounded convex domain $\O$ satisfying positive curvature condition in Definition~\ref{defi:PositiveCurvature}, there exists a constant $C=C(\O)$ such that 
for any $x\in \bar{\O}$ and $v\in\R^3$, we have
\begin{equation}
    |q_-(x,v)-q_+(x,v)|\leq C N_-(x,v).
\end{equation}
\end{prop}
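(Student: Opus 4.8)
The plan is to exploit the uniform sphere-enclosing condition from Proposition~\ref{prop:comparison}: enclose $\O$ in a ball $B_{R_1}(y')$ that touches $\partial\O$ at the relevant boundary point, and transfer the estimate for the chord of $\O$ to the chord of a sphere, where it can be computed explicitly. The chord $\overline{q_-(x,v)q_+(x,v)}$ lies along the line through $x$ with direction $v$; its length is $\tau_-(x,v)|v| + \tau_+(x,v)|v|$. Since this is a plan and not a full proof, I will not optimize constants.

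First I would fix $x\in\bar\O$ and $v\in\R^3$ and set $q_- = q_-(x,v)$, $q_+ = q_+(x,v)$. If $v=0$ the statement is trivial, so assume $|v|>0$ and write $\hat v = v/|v|$. The key geometric step: take the sphere $S_o(q_-) = \partial B_{R_1}(y')$ from the uniform sphere-enclosing condition associated with the boundary point $q_-$, so that $\O\subset B_{R_1}(y')$ and $q_-\in S_o(q_-)$. The line $\ell$ through $q_-$ with direction $\hat v$ meets $\bar\O$ in the segment $[q_-, q_+]$, hence meets $\overline{B_{R_1}(y')}$ in a chord that contains $[q_-,q_+]$; in particular $|q_- - q_+|$ is at most the length of the chord of $B_{R_1}(y')$ along $\ell$ starting at $q_-$. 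Now $q_-$ lies on the sphere $\partial B_{R_1}(y')$, so this chord length equals $2R_1 |\cos\phi|$, where $\phi$ is the angle between $\hat v$ and the inward radius $\overrightarrow{q_- y'}$ at $q_-$. But the inward radius of the enclosing sphere at its contact point $q_-$ is precisely the inward unit normal $-n(q_-)$ of $\partial\O$ at $q_-$ (this is why the sphere is tangent there). Therefore $|\cos\phi| = |n(q_-)\cdot\hat v| = N_-(x,v)$, and we conclude
\[
    |q_-(x,v)-q_+(x,v)| \le 2R_1\, N_-(x,v) = \frac{2}{k_0}\,N_-(x,v),
\]
giving the claimed inequality with $C = 2R_1 = 2/k_0$, where $k_0 < \min_{q\in\partial\O} k_1(q)$ is as in the proof of Proposition~\ref{prop:comparison}.

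The main obstacle is making rigorous the two geometric facts that I have stated informally: first, that the inward radius of the enclosing sphere $S_o(q_-)$ at the contact point $q_-$ coincides with the inward normal direction $-n(q_-)$ of $\partial\O$; and second, that the chord of the enclosing ball along the line $\ell$ is at least as long as the portion $[q_-,q_+]$ of $\ell$ inside $\O$. The first follows because $\O\subset B_{R_1}(y')$ and both surfaces pass through $q_-$, so $\partial B_{R_1}(y')$ is tangent to $\partial\O$ at $q_-$ and lies (locally) outside $\partial\O$; hence they share the tangent plane at $q_-$, and the outward normals agree, so $\overrightarrow{y' q_-}$ is a positive multiple of $n(q_-)$ — one should cite the tangency argument already used in the proof of Proposition~\ref{prop:comparison}. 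The second is immediate from $\O\subset B_{R_1}(y')$: every point of $[q_-,q_+]\subset\bar\O$ lies in $\overline{B_{R_1}(y')}$, and since this segment is collinear with $\ell$, it is contained in the single chord $\ell\cap\overline{B_{R_1}(y')}$. One minor point to address is the degenerate case $N_-(x,v)=0$ (grazing direction), where the bound forces $q_- = q_+$; by convexity a line tangent to $\partial\O$ meets $\bar\O$ in at most a point on the boundary, which is consistent, and for $x$ in the interior such directions do not arise, so this case is harmless.
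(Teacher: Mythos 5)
Your argument is correct and essentially the same as the paper's: both proofs take the enclosing sphere $S_o(q_-)=\partial B_{R_1}(y')$ from the uniform sphere-enclosing condition (Proposition~\ref{prop:comparison}), observe that the chord of $\O$ along $\hat v$ from $q_-$ is contained in the chord of the enclosing ball, and compute the latter's length as $2R_1\cos\theta_-$ with $\cos\theta_-=N_-(x,v)$ because the inward radius at the contact point $q_-$ is the inward normal $-n(q_-)$. The paper states the tangency fact implicitly by using $\theta_-=\arccos N_-(x,v)$ directly, while you spell it out; otherwise the two arguments coincide.
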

\begin{proof}
    For given $x\in\bar\O$ and $v\in\R^3$, write $q_-=q_-(x,v)$, 
    $q_+=q_+(x,v)$, and $\theta_- = \arccos N_-(x,v)$ for simplicity. In view of Proposition~\ref{prop:comparison}, there exists 
    sphere $S_o(q_-)$ as defined in Definition~\ref{defi:comparison}. 
    Denote the other intersection of half-line $\overrightarrow{q_-q_+}$ and $S_o(q_-)$ by $q_1$. Therefore, we have 
    \begin{equation}\begin{split}
        |q_--q_+|&\leq |q_--q_1| \\
        &=2R_1 \cos\theta_- \\
        &=2R_1 N_-(x,v).
    \end{split}\end{equation}
\end{proof}
\begin{rmk} \label{remark:comparison}
    From the above proof, we have 
    \begin{equation}
        |q_--q_+|\leq 2R_1 \cos\theta_-.        
    \end{equation}
    Replacing $v$ with $-v$ in Proposition~\ref{prop:ChordBound} yields
    \begin{equation}
        |q_--q_+|\leq 2R_1 \cos\theta_+,
    \end{equation}
     where $\theta_+=\arccos N_+(x,v)$.
\end{rmk}
\begin{prop} \label{prop:DistanceComparison}
    For a given circle $\mathcal C$ in $\R^2$ centered at $O$ with radius $r$ and two given points $A,B$ on $\mathcal C$. Let 
    $N$ be the arc midpoint of  $\stackrel\frown{AB}$ (the minor arc) and $M$ be the midpoint of $\overline{AB}$. 
    Then for any $Y\in \overline{AM}$ (resp. $Y\in \overline{BM}$), we have 
    \begin{equation}
        \dist(Y,\mathcal C)\geq \frac{1}{\sqrt{2}} |Z-Y|,
    \end{equation}
    where $Z$ is the point on $\overline{AN}$ (resp. $\overline{BN}$) such that $(Z-Y)\perp(A-B)$ (See Figure~\ref{fig:DistanceComparison}).
\end{prop}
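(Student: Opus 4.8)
The plan is to choose coordinates that turn the statement into a one-variable algebraic inequality. By reflecting in the perpendicular bisector of $\overline{AB}$ it is enough to treat the case $Y\in\overline{AM}$, the case $Y\in\overline{BM}$ being symmetric; we may also assume $A\ne B$, since otherwise there is nothing to prove. First I would place $M$ at the origin with $\overline{AB}$ on the $x$-axis, writing $A=(-a,0)$, $B=(a,0)$ with $a=\frac12|A-B|>0$, and the center at $O=(0,-h)$, where $h=\sqrt{r^2-a^2}\ge 0$ is the distance from $O$ to the chord $\overline{AB}$ (the sign chosen so that the minor arc lies in the half-plane $\{y\ge 0\}$). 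Then the arc midpoint is $N=(0,r-h)$ with $r-h\ge 0$, and I parametrize $Y$ by $\ell:=|A-Y|\in[0,a]$, so that $Y=(-a+\ell,0)$.

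Next I would write the two quantities down explicitly. Parametrizing $\overline{AN}$ by $A+\tau(N-A)$, $\tau\in[0,1]$, the point on $\overline{AN}$ with the same first coordinate as $Y$ corresponds to $\tau=\ell/a\in[0,1]$, so $Z=\bigl(-a+\ell,\ \frac{\ell}{a}(r-h)\bigr)$ and hence $|Z-Y|=\ell(r-h)/a$. On the other hand $Y$ lies on the chord $\overline{AB}$, hence inside the disk, so $\dist(Y,\mathcal C)=r-|O-Y|$, and a short computation gives $|O-Y|^2=(\ell-a)^2+h^2=r^2-\ell(2a-\ell)$; rationalizing,
\[
    \dist(Y,\mathcal C)=r-\sqrt{r^2-\ell(2a-\ell)}=\frac{\ell(2a-\ell)}{\,r+|O-Y|\,}.
\]
The desired inequality $\dist(Y,\mathcal C)\ge\frac1{\sqrt2}|Z-Y|$ carries the common factor $\ell$ on both sides; cancelling it (the case $\ell=0$, i.e. $Y=A$, being trivial) and writing $b:=a-\ell\in[0,a]$, so that $2a-\ell=a+b$ and $|O-Y|=\sqrt{b^2+h^2}$, the claim becomes
\[
    \sqrt2\,a\,(a+b)\ \ge\ (r-h)\bigl(r+\sqrt{b^2+h^2}\bigr).
\]

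To prove this I would use the elementary bound $\sqrt{b^2+h^2}\le b+h$ (valid since $b,h\ge 0$) together with $r-h\ge 0$ to reduce it to $\sqrt2\,a(a+b)\ge(r-h)(r+b+h)$; expanding the right-hand side and using $a^2=r^2-h^2=(r-h)(r+h)$, this is
\[
    (\sqrt2-1)\,a^2+b\bigl(\sqrt2\,a-(r-h)\bigr)\ \ge\ 0,
\]
and each summand is nonnegative: the first obviously, and the second because $b\ge 0$ and $\sqrt2\,a\ge r-h$, the latter being equivalent (after squaring two nonnegative quantities) to $(r-h)(r+3h)\ge 0$. This finishes the case $Y\in\overline{AM}$, and reflection gives the case $Y\in\overline{BM}$.

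The step I expect to require care is producing an estimate that is \emph{tight enough}, rather than the algebra itself. Bounding $r+|O-Y|\le 2r$ in the formula for $\dist(Y,\mathcal C)$ is too wasteful when the minor arc is wide — there $h$ is small and $|O-Y|$ can lie far below $r$ (for instance when $Y$ is near $M$) — and the inequality then genuinely fails; one must keep $|O-Y|=\sqrt{b^2+h^2}$ and exploit both $\sqrt{b^2+h^2}\le b+h$ and $\sqrt2\,a\ge r-h$, which is exactly what makes the constant $1/\sqrt2$ go through (with a little room to spare). A secondary, purely bookkeeping matter is keeping track of which of the two arcs cut off by $\overline{AB}$ contains $N$ and checking that $Z$ actually lies on the segment $\overline{AN}$; both are handled automatically by the parametrization above, since $\ell\in[0,a]$ corresponds to the parameter $\ell/a\in[0,1]$ along $\overline{AN}$.
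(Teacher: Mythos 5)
Your proof is correct, and it takes a genuinely different route from the one in the paper. The paper's argument is synthetic: it sets $\theta=\angle OAM$, observes that $\angle YZQ=\angle ONA=\tfrac{\pi}{4}+\tfrac{\theta}{2}\ge\tfrac{\pi}{4}$ (because $\overline{YZ}\parallel\overline{MN}$ and $\triangle OAN$ is isosceles with apex angle $\tfrac{\pi}{2}-\theta$), lets $Q$ be where $\overrightarrow{OY}$ meets $\overline{AN}$, and concludes by the law of sines in $\triangle YZQ$ together with $\dist(Y,\mathcal C)=|P-Y|\ge|Q-Y|$. You instead put everything in coordinates, write both quantities in closed form in terms of $\ell=|A-Y|$, cancel the common factor $\ell$, and reduce the claim to the algebraic inequality $\sqrt2\,a(a+b)\ge(r-h)\bigl(r+\sqrt{b^2+h^2}\bigr)$, which you settle with the two elementary bounds $\sqrt{b^2+h^2}\le b+h$ and $\sqrt2\,a\ge r-h$. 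I checked each step (the formula $|O-Y|^2=r^2-\ell(2a-\ell)$, the identity $Z=(-a+\ell,\ \tfrac{\ell}{a}(r-h))$, the expansion via $a^2=(r-h)(r+h)$, and the squaring argument giving $(r-h)(r+3h)\ge0$) and they are all correct, including the sign conventions $b,h\ge0$ and $r-h\ge0$. The paper's route is shorter once the angle identity $\angle ONA=\tfrac{\pi}{4}+\tfrac{\theta}{2}$ is spotted, but it leaves a couple of configuration facts implicit (that $Q$ lies between $Y$ and $P$ on the ray, and that the angles sit as drawn); your coordinate computation is longer but entirely self-verifying and makes it transparent where the constant $1/\sqrt2$ comes from and that there is a little slack. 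Both are valid; yours would be a fine substitute.
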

\begin{proof}
    Without loss of generality, we may assume $Y\in \overline{AM}$. Let $\theta$ denote the angle
    $\angle{OAM}$. Suppose half-line $\overrightarrow{OY}$ meets $\mathcal C,\overline{AN}$ at $P,Q$, respectively. 
    Thus, $|P-Y|=\dist(Y,\mathcal C)$. We also notice that 
    \begin{equation}
        \angle{YZQ}=\angle{ONA}=\frac{\pi}{4}+\frac{\theta}{2}.
    \end{equation}
    Therefore, $\sin{\angle{YZQ}}\geq \frac 1 {\sqrt 2}$.
    By the law of sines, we have 
    \begin{equation}\begin{split}
        |P-Y|\geq |Q-Y|&=\frac{\sin\angle{YZQ}}{\sin\angle{YQZ}}\, |Z-Y| \\
        &\geq \frac{1}{\sqrt{2}}|Z-Y|.
    \end{split}\end{equation}
    \begin{figure}[!htb]
        \centering
        \includegraphics[scale=0.7]{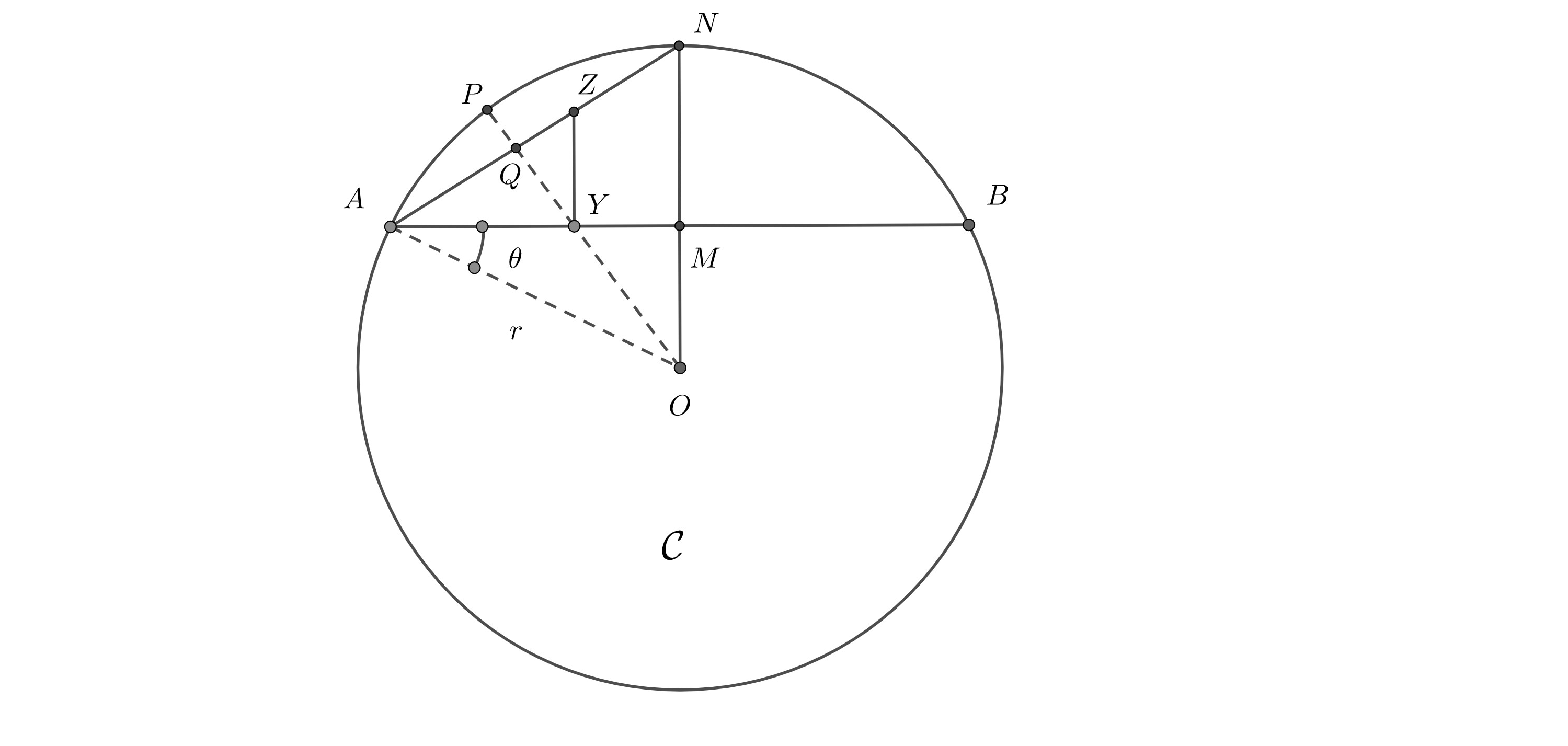}
        \caption{}
        \label{fig:DistanceComparison}
    \end{figure}
\end{proof}
We are now in a position to prove the following lemma.
\begin{lem} \label{lemma:FracIntegralOverChord}
    Suppose $\Omega\subset\R^3$ satisfies the positive curvature condition in Definition~\ref{defi:PositiveCurvature}. Then, 
    there exists a constant $C=C(\O)$ such that for any $y\in\O$ and $\hat{v}\in\S^2$, we have
    \begin{equation} \label{eq:FracIntegralOverChord}
        \int^{|q_+(y,\hat{v})-y|}_0 d_{y+r\hat{v}}^{-\frac 1 2 +\epsilon}\, dr \leq C, \quad \forall\epsilon \in[0,\frac 1 2),
    \end{equation}
    and
    \begin{equation} \label{eq:FracIntegralOverChord2}
        \int^{|q_+(y,\hat{v})-y|}_0 d_{y+r\hat{v}}^{-1 +\epsilon}\, dr \leq \frac{C}{\epsilon} d_y^{-\frac 1 2 +\epsilon}, \quad \forall\epsilon \in(0,\frac 1 2). 
    \end{equation}
\end{lem}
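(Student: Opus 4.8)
The plan is to reduce both inequalities to one‑dimensional integrals of an explicit pointwise lower bound for $d_{y+r\hat{v}}$, obtained by comparing $\O$ with the enclosing sphere at the exit point of the chord; this is exactly the role of Proposition~\ref{prop:comparison} and is in the spirit of Proposition~\ref{prop:DistanceComparison}. Fix $y\in\O$ and $\hat{v}\in\S^2$, write $q_+=q_+(y,\hat{v})$, $N_+=N_+(y,\hat{v})$, $\ell_+=|q_+-y|$ (so that the upper limit of integration equals $\ell_+$), and let $R_1=R_1(\O)$ be the radius from the uniform sphere‑enclosing condition. First I would note that the enclosing sphere $S_o(q_+)=\partial B_{R_1}(y')$ is tangent to $\partial\O$ at $q_+$, hence $y'=q_+-R_1 n(q_+)$; since $\O\subset B_{R_1}(y')$ we have $d_z\ge R_1-|z-y'|$ for every $z\in B_{R_1}(y')$, and for a point $z=q_+-s\hat{v}$ on the chord a direct computation gives $|z-y'|^2=R_1^2-s(2R_1N_+-s)$. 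Using $R_1-\sqrt{R_1^2-a}\ge a/(2R_1)$ on $[0,R_1^2]$, this yields
\begin{equation}
d_{q_+-s\hat{v}}\ \ge\ \frac{s(2R_1N_+-s)}{2R_1}\ \ge\ \frac{N_+}{2}\min\{s,\,2R_1N_+-s\},\qquad 0\le s\le 2R_1N_+ .
\end{equation}
By Remark~\ref{remark:comparison} together with $\ell_+\le|q_+-q_-|$ we have $\ell_+\le 2R_1N_+$, so after the substitution $s=\ell_+-r$ (which turns $\int_0^{\ell_+}d_{y+r\hat{v}}^{\,\alpha}\,dr$ into $\int_0^{\ell_+}d_{q_+-s\hat{v}}^{\,\alpha}\,ds$) the above bound is available on the whole interval of integration.

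For \eqref{eq:FracIntegralOverChord} the exponent $-\tfrac12+\epsilon$ is integrable, so I would simply enlarge the domain to $[0,2R_1N_+]$ and rescale $s=2R_1N_+\,t$:
\begin{equation}
\int_0^{\ell_+} d_{q_+-s\hat{v}}^{\,-\frac12+\epsilon}\,ds\ \le\ (2R_1)^{\frac12-\epsilon}\,(2R_1N_+)^{2\epsilon}\int_0^1\bigl(t(1-t)\bigr)^{-\frac12+\epsilon}\,dt .
\end{equation}
The last integral equals the Beta value $B(\tfrac12+\epsilon,\tfrac12+\epsilon)\le B(\tfrac12,\tfrac12)=\pi$, and $N_+\le 1$, so the right‑hand side is bounded by a constant $C(\O)$, uniformly for $\epsilon\in[0,\tfrac12)$.

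For \eqref{eq:FracIntegralOverChord2} the exponent $-1+\epsilon$ is more singular, and the extra ingredient is the \emph{upper} bound $d_y\le N_+\ell_+\le 2R_1N_+^2$, which follows from Proposition~\ref{prop:ProjDistance} applied with $-v$ in place of $v$ (so that $q_-$, $N_-$ become $q_+$, $N_+$) together with $\ell_+\le 2R_1N_+$; equivalently $N_+^{-(1-2\epsilon)}\le (2R_1)^{\frac12-\epsilon}d_y^{-\frac12+\epsilon}$, which is the identity that fixes the power of $d_y$. If $\ell_+\le R_1N_+$ I would use $d_{q_+-s\hat{v}}\ge \tfrac12N_+s$ throughout, compute $\int_0^{\ell_+}(\tfrac12N_+s)^{-1+\epsilon}\,ds=\tfrac{2^{1-\epsilon}}{\epsilon}N_+^{-1+\epsilon}\ell_+^{\epsilon}$, and conclude using $\ell_+\le R_1N_+$ and $d_y\le N_+\ell_+$ (this reduces the desired inequality to $\ell_+\le R_1N_+$). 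If $R_1N_+<\ell_+\le 2R_1N_+$ I would split the integral at $s=R_1N_+$; on $[0,R_1N_+]$ use $d_{q_+-s\hat{v}}\ge\tfrac12N_+s$, on $[R_1N_+,\ell_+]$ use $d_{q_+-s\hat{v}}\ge\tfrac12N_+(2R_1N_+-s)$ and substitute $u=2R_1N_+-s$, so that each half is bounded by $\tfrac{2^{1-\epsilon}R_1^{\epsilon}}{\epsilon}N_+^{-(1-2\epsilon)}$, and then apply the displayed relation for $N_+^{-(1-2\epsilon)}$. In every case the total is $\le \tfrac{C(\O)}{\epsilon}\,d_y^{-\frac12+\epsilon}$.

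The sphere comparison and the Beta‑integral reduction for \eqref{eq:FracIntegralOverChord} are routine. I expect the main obstacle to be \eqref{eq:FracIntegralOverChord2}: because $d_{y+r\hat{v}}$ decays like (distance to $q_+$)$\times N_+$ near the exit point, one must carry $N_+$ explicitly and convert its powers into powers of $d_y$ through the two relations $d_y\le N_+\ell_+$ and $\ell_+\le 2R_1N_+$; using only the cruder bound $N_+\gtrsim d_y/\diam\O$ would produce the too‑singular exponents $d_y^{-1+\epsilon}$ or $d_y^{2\epsilon-1}$ instead of the required $d_y^{-\frac12+\epsilon}$.
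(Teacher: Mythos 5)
The proposal has a genuine error at its very first step: the direction of the distance comparison is reversed. If $\O\subset B_{R_1}(y')$ and $z\in\O$, then the ball $B_{d_z}(z)$ is contained in $\O$ and therefore in $B_{R_1}(y')$, which forces $d_z+|z-y'|\le R_1$, i.e.\ $d_z\le R_1-|z-y'|$. The \emph{enclosing} sphere yields an \emph{upper} bound on $d_z$, not the lower bound $d_z\ge R_1-|z-y'|$ that you assert and that your entire computation rests on. (For a concrete counterexample: $\O=B_1(0)$, $q_+=(1,0,0)$, $R_1=2$, $y'=(-1,0,0)$, $z=(0,0.9,0)$ gives $d_z=0.1$ while $R_1-|z-y'|\approx 0.655$.) Since the integrand is a negative power of $d_{y+r\hat v}$, you need a \emph{lower} bound on $d$ to control the singularity, so the argument collapses.

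To get a lower bound on $d_z$ along the chord, one must compare with an \emph{interior} sphere $S_i=\partial B_{r_1}(y'')\subset\O$ tangent at the exit point, for which $z\in B_{r_1}(y'')$ implies $d_z\ge r_1-|z-y''|$. But then the bound is only available on the portion of the chord inside $B_{r_1}(y'')$, namely $0\le s\le 2r_1 N_+$, which in general is shorter than the chord (since $r_1<R_1$ and the chord has length up to $2R_1 N_\pm$). This is precisely why the paper's proof uses interior spheres at both $q_-$ and $q_+$ and then handles the middle part of the chord separately (via the claim about the convex hull $S$ of $S_i(q_-)\cup S_i(q_+)$, plus the bound $|q_--q_+|\le 2R_1\cos\theta_\pm$ from Remark~\ref{remark:comparison}). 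Your attempt to cover the whole chord in a single stroke with the enclosing sphere is exactly the step that cannot work. The auxiliary observations you list for part \eqref{eq:FracIntegralOverChord2} --- $d_y\le N_+\ell_+$, $\ell_+\le 2R_1N_+$, the resulting conversion $N_+^{-(1-2\epsilon)}\lesssim d_y^{-\frac12+\epsilon}$ --- are all correct and do mirror the spirit of the paper's use of $S_o(q_-)$ to get $d_y\le 2R_1(1-\sin\theta_-)$, but they cannot rescue the argument once the core lower bound on $d_{y+r\hat v}$ is absent.
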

\begin{proof}
    The idea of proof is that we first show the lemma holds for balls. For general cases, we employ Proposition~\ref{prop:comparison} to 
    compare the convex domain $\O$ with balls. For given $\epsilon\in [0,\frac 1 2)$, $y\in\O$ and $\hat{v}\in\S^2$. 
    We denote $q_-=q_-(y,\hat{v})$ and $q_+=q_+(y,\hat{v})$ for simplicity. We 
    claim
    \begin{equation} \label{eq:FracIntegralOverChord1}
        \int^{|q_+-q_-|}_0 d_{q_-+r\hat{v}}^{-\frac 1 2 +\epsilon}\,dr \leq C.
    \end{equation}
    First, we shall prove that \eqref{eq:FracIntegralOverChord1} holds for the case of balls in $\R^3$. 
    Let $\mathcal B\subset \R^3$ be an open ball with radius $\rho$ centered at $O$ and $A,B$ be two points on $\partial\mathcal B$. For 
    $\hat v=\frac{\overrightarrow{AB}}{|A-B|}$, the following integral is bounded by some constant $C=C(\rho)$,
    \begin{equation} \label{eq:FracIntegralOverChordBall}
        \int^{|A-B|}_0 \dist(A+r\hat v,\partial\mathcal B)^{-\frac 1 2 +\epsilon}\,dr \leq C.
    \end{equation}
    Let $\mathcal{C}$ be the intersection circle of $\partial\mathcal{B}$ and the plane passing through $A,B$ and $O$. 
    Denote the midpoint of $\overline{AB}$ by $M$ and write $\theta = \angle{OAB}$. For $0\leq r \leq |A-M|$, it follows by Proposition~\ref{prop:DistanceComparison} that 
    \begin{equation}\begin{split}
        \dist(A+r\hat v,\mathcal C)&\geq \frac{1}{\sqrt{2}} |Z-Y| \\
        &=\frac{1}{\sqrt{2}}\tan\big( \frac{\pi}{4}-\frac{\theta}{2} \big)\,r \\
        &=\frac{1}{\sqrt{2}}\frac{1-\sin\theta}{\cos\theta}\,r,
    \end{split}\end{equation}
    where $Y=A+r\hat v$ and $Z$ is as defined in Proposition~\ref{prop:DistanceComparison}. Therefore, we obtain 
    \begin{equation}\begin{split}
        \int^{|A-M|}_0 \dist(A+r\hat v,\partial\mathcal B)^{-\frac 1 2 +\epsilon}\,dr &= \int^{|A-M|}_0 \dist(A+r\hat v,\mathcal C)^{-\frac 1 2 +\epsilon}\,dr  \\
        &\leq C\int^{\rho\cos\theta}_0\left(\frac{\cos\theta}{1-\sin\theta}\right)^{\frac 1 2 -\epsilon} r^{-\frac 1 2 +\epsilon}\,dr \\
        &\leq C\rho^{\frac 1 2 +\epsilon}\, \frac{\cos\theta}{(1-\sin\theta)^{\frac 1 2 -\epsilon}} \\
        &\leq C\max\{\rho,1\},
    \end{split}\end{equation}
    where the last inequality follows from the fact $\cos\theta \leq \sqrt{2(1-\sin\theta)}$. The situation 
    $|A-M|\leq r\leq|A-B|$ can be treated similarly. Therefore, \eqref{eq:FracIntegralOverChordBall} follows.
    
    For general cases, we make a comparison with sphere cases. 
    According to Proposition~\ref{prop:comparison}, there are spheres $S_i(q_-)$ and $S_i(q_+)$ with radii $r_1=r_1(\O)$ as defined 
    in Definition~\ref{defi:comparison}. Denote the other intersection of $S_i(q_-)$ and 
    $\overline{q_-q_+}$ by $q_1$, the other intersection of $S_i(q_+)$ and $\overline{q_-q_+}$ by $q_2$. For $0\leq r \leq |q_1-q_-|$, we notice that 
    \begin{equation}
        \dist(q_-+r\hat{v},\partial\O)\geq\dist(q_-+r\hat{v},S_i(q_-)).
    \end{equation}
    For $|q_2-q_-|\leq r\leq |q_+-q_-|$, similarly we have 
    \begin{equation}
        \dist(q_-+r\hat{v},\partial\O)\geq\dist(q_-+r\hat{v},S_i(q_+)).
    \end{equation}
    Therefore, in the case of $\overline{q_-q_1}\cup\overline{q_2q_+} = \overline{q_-q_+}$, 
    by \eqref{eq:FracIntegralOverChordBall} there exists a constant $C=C(r_1)$ such that 
    \begin{equation}\begin{split}
        \int^{|q_+-q_-|}_0 d_{q_-+r\hat{v}}^{-\frac 1 2 +\epsilon}\,dr &\leq \int^{|q_1-q_-|}_0 d_{q_-+r\hat{v}}^{-\frac 1 2 +\epsilon}\,dr + \int^{|q_+-q_-|}_{|q_2-q_-|} d_{q_-+r\hat{v}}^{-\frac 1 2 +\epsilon}\,dr \\
        &\leq \int^{|q_1-q_-|}_0 \dist(q_-+r\hat{v},S_i(q_-))^{-\frac 1 2 +\epsilon}\,dr \\
        &\quad \quad+ \int^{|q_+-q_-|}_{|q_2-q_-|} \dist(q_-+r\hat{v},S_i(q_+))^{-\frac 1 2 +\epsilon}\,dr \\
        &\leq C.
    \end{split}\end{equation}
    If $\overline{q_-q_1}\cup\overline{q_2q_+}$ does not cover $\overline{q_-q_+}$, we consider the convex hull of $S_i(q_-)\cup S_i(q_+)$, 
    denoted by $S$. Clearly, one can see that 
    \begin{equation}
        \dist(q_-+r\hat{v},\partial\O)\geq\dist(q_-+r\hat{v},\partial S).
    \end{equation}
    From solid geometry, we can see the following property.
    \begin{claim*}
        For $\left|\frac{q_1+q_-}{2}-q_- \right|\leq r \leq \left|\frac{q_2+q_+}{2}-q_- \right|$, 
        if $N_-(y,\hat{v})\leq N_+(y,\hat{v})$ (resp. $N_-(y,\hat{v})> N_+(y,\hat{v})$) or equivalently $\theta_- \geq \theta_+$ (resp. $\theta_- < \theta_+$), where 
        $\theta_{\pm}= \arccos N_{\pm}(y,\hat{v})$, then we have the inequality 
        \begin{align}
            &\dist(q_-+r\hat{v},\partial S)\geq \dist(\frac{q_1+q_-}{2},S_i(q_-)) = r_1(1-\sin\theta_-) \\
            \bigg(\text{resp. }& \dist(q_-+r\hat{v},\partial S)\geq \dist(\frac{q_2+q_+}{2},S_i(q_+))= r_1(1-\sin\theta_+)\bigg).
        \end{align}
        \begin{figure}[!htb]
            \centering
            \includegraphics[scale=0.7]{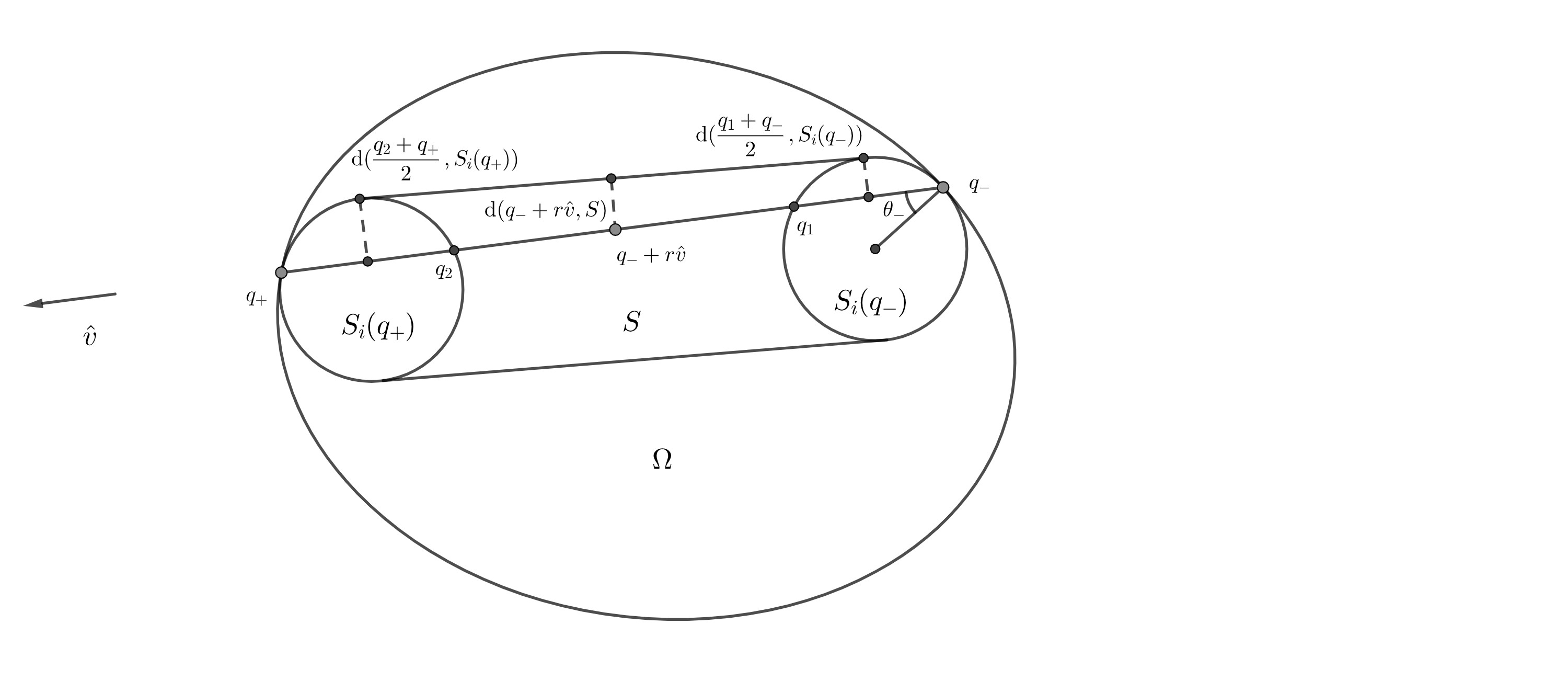}
            \caption{}
            \label{fig:distance}
        \end{figure}
        \noindent See Figure~\ref{fig:distance} for two-dimensional cases.
    \end{claim*}
    \noindent Recalling that $|q_--q_+|\leq 2R_1 \cos\theta_-$ from Remark~\ref{remark:comparison}, thus we deduce that, if $N_-(y,\hat{v})\leq N_+(y,\hat{v})$,
    \begin{equation}\begin{split}
        \int^{|q_+-q_-|}_0 d_{q_-+r\hat{v}}^{-\frac 1 2 +\epsilon}\,dr&\leq \int^{|q_1-q_-|}_0 +\int^{|q_2-q_-|}_{|q_1-q_-|}+\int^{|q_+-q_-|}_{|q_2-q_-|}d_{q_-+r\hat{v}}^{-\frac 1 2 +\epsilon}\,dr\\
        &\leq C(r_1) +2R_1 r_1^{-\frac 1 2 +\epsilon} \cos\theta_-\,(1-\sin\theta_-)^{-\frac 1 2 +\epsilon} +C(r_1) \\
        &\leq C(r_1,R_1).
    \end{split}\end{equation}
    The inequality holds for $N_-(y,\hat{v})> N_+(y,\hat{v})$ similarly. 
    
    Let us now look at \eqref{eq:FracIntegralOverChord2}. We mimic the above proof for \eqref{eq:FracIntegralOverChord1}, since the 
    steps proceed in the same way. For given $\epsilon\in (0,\frac 1 2)$, $y\in\O$ and $\hat{v}\in\S^2$, this time we claim 
    \begin{equation} \label{eq:FracIntegralOverChord21}
        \int^{|q_+-q_-|}_0 d_{q_-+r\hat v}^{-1 +\epsilon}\,dr \leq \frac{C}{\epsilon} d_y^{-\frac 1 2 +\epsilon}
    \end{equation}
    for some constant $C=C(r_1,R_1)$. For the cases of open balls in $\R^3$, we adopt the above notations and deduce that 
    \begin{equation}\begin{split}
        \int^{|A-M|}_0 \dist(A+r\hat{v},\mathcal \partial \mathcal{B})^{-1 +\epsilon}\,dr&=\int^{|A-M|}_0 \dist(A+r\hat{v},\mathcal C)^{-1 +\epsilon}\,dr \\
        &\leq C\int^{\rho\cos\theta}_0\left(\frac{\cos\theta}{1-\sin\theta}\right)^{1 -\epsilon} r^{-1 +\epsilon}\,dr \\
        &\leq C\, \frac{\rho^{\epsilon}\cos\theta}{\epsilon(1-\sin\theta)^{1 -\epsilon}} \\
        &\leq C\,\frac{\max\{\rho,1\}}{\epsilon (1-\sin\theta)^{-\frac 1 2 +\epsilon}} \\
        &\leq\frac{C}{\epsilon}\, \dist(M,\mathcal C)^{-\frac 1 2 +\epsilon} \\
        (&\leq \frac{C}{\epsilon}\, \dist(Y,\mathcal C)^{-\frac 1 2 +\epsilon}\quad \text{for any }Y\in \overline{AB}),
    \end{split}\end{equation}
where we have used $\cos\theta_-\leq \sqrt{2(1-\sin\theta_-)}$. 
For general cases, we denote $q_3=\frac{q_-+q_1}{2}$ and $q_4=\frac{q_++q_2}{2}$ for convenience. By considering the convex hull $S$ 
of $S_i(q_-)\cup S_i(q_+)$ again, if $N_-(y,\hat{v})\leq N_+(y,\hat{v})$, we deduce  
\begin{equation}\begin{split}
    \int^{|q_+-q_-|}_0 d_{q_-+r\hat{v}}^{-1+\epsilon}\,dr&\leq \int^{|q_1-q_-|}_0 +\int^{|q_2-q_-|}_{|q_1-q_-|}+\int^{|q_+-q_-|}_{|q_2-q_-|}d_{q_-+r\hat{v}}^{-1+\epsilon}\,dr\\
    &\leq \frac{C}{\epsilon}\, \dist\bigl((q_3,S_i(q_-)\bigr)^{-\frac 1 2 +\epsilon} \\
    &\quad +2R_1 r_1^{-1+\epsilon} \cos\theta_-\,(1-\sin\theta_-)^{-1+\epsilon}\\
    &\quad +\frac{C}{\epsilon}\,\dist\bigl(q_4,S_i(q_+)\bigr)^{-\frac 1 2 +\epsilon} \\
    &\leq \frac{C}{\epsilon}\, \left( \dist\bigl(q_3,S_i(q_-)\bigr)^{-\frac 1 2 +\epsilon}+ \dist\bigl(q_4,S_i(q_+)\bigr)^{-\frac 1 2 +\epsilon} \right),
\end{split}\end{equation}
where we have used $\cos\theta_- \leq \sqrt{2(1-\sin\theta_-)}$ and 
$ \dist\bigl(q_3,S_i(q_-)\bigr) = r_1(1-\sin\theta_-)$ in the last inequality. 
To complete the proof for \eqref{eq:FracIntegralOverChord21}, we consider sphere $S_o(q_-)$ defined in Definition \ref{defi:comparison}. 
Denote the other intersection of half-line $\overrightarrow{q_-q_+}$ and $S_o(q_-)$ by $q_5$, the midpoint of 
the line segment $\overline{q_-q_5}$ by $q_6$. Clearly, we have
\begin{equation}
    d_y =\dist(y,\partial\O)\leq \dist\bigl(y,S_o(q_-)\bigr)\leq \dist\bigl(q_6,S_o(q_-)\bigr)=2R_1 (1-\sin\theta_-).
\end{equation}
Hence, it follows that 
\begin{equation}\begin{split}
    \dist\bigl(q_3,S_i(q_-)\bigr)^{-\frac 1 2 +\epsilon}&=r_1^{-\frac 1 2 +\epsilon} (1-\sin\theta_-)^{-\frac 1 2 +\epsilon} \\
    &=\left(\frac{r_1}{R_1}\right)^{-\frac 1 2 +\epsilon}\dist\bigl(q_6,S_o(q_-)\bigr)^{-\frac 1 2 +\epsilon} \\
    &\leq C\, d_y^{-\frac 1 2 +\epsilon}.
\end{split}\end{equation}
Similarly, we have 
\begin{equation}
    \dist\bigl(q_4,S_i(q_+)\bigr)^{-\frac 1 2 +\epsilon} \leq C\, d_y^{-\frac 1 2 +\epsilon}.
\end{equation}
We conclude that 
\begin{equation}
    \int^{|q_+-q_-|}_0 d_{q_-+r\hat{v}}^{-1 +\epsilon}\,dr \leq \frac{C}{\epsilon}\, d_y^{-\frac 1 2 +\epsilon}.
\end{equation}
The inequality holds for $N_-(y,\hat{v})> N_+(y,\hat{v})$ similarly. This completes the proof.
\end{proof}
\begin{rmk}
One can see that the behavior of integral $\int^{|q_+-q_-|}_0 d^{-s}_{q_-+r\hat{v}}\,dr$  
depends heavily on 
boundedness of $\Theta(\theta;s)=\frac{\cos \theta}{(1-\sin\theta)^s}$ for $\theta\in (0,\frac{\pi}{2})$. When $s\leq \frac{1}{2}$, there is a constant $C$ independent of $\theta$ 
such that $\Theta(\theta;s) \leq C$. On the other hand, there is no such constant whenever $s>\frac{1}{2}$. We also notice that the 
integral blows up as $s\to 1-$.
\end{rmk}
\begin{cor} \label{cor:DistanceIntegral}
    Suppose $\Omega\subset\R^3$ satisfies the positive curvature condition in Definition~\ref{defi:PositiveCurvature}. Then, there exists a constant $C=C(\O)$ such that, for any small $\epsilon>0$, we have 
    \begin{equation}
        \int_{\O} \frac{1}{d_x^{1-\epsilon}} \, dx \leq \frac{C}{\epsilon}.
    \end{equation}
\end{cor}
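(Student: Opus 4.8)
The plan is to isolate a single geometric fact — that the inner collar $\{x\in\O:\dist(x,\partial\O)<t\}$ has volume at most of order $t$ — and then to produce the factor $1/\epsilon$ from an elementary one–dimensional integral. Write $d_{x}=\dist(x,\partial\O)$. The first ingredient is a layer–cake identity: for $0<\epsilon<1$ and $x\in\O$ one has $d_{x}^{-1+\epsilon}=\int_{d_{x}}^{\infty}(1-\epsilon)\,t^{-(2-\epsilon)}\,dt$, so integrating over $\O$ and applying Tonelli's theorem gives
\[
    \int_{\O}d_{x}^{-1+\epsilon}\,dx=(1-\epsilon)\int_{0}^{\infty}t^{-(2-\epsilon)}\,\bigl|\{x\in\O:\,d_{x}<t\}\bigr|\,dt .
\]
Thus the claim reduces to an upper bound for the volume of the inner $t$–collar of $\partial\O$.

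The collar bound is where convexity enters — the positive curvature hypothesis is not needed for this corollary, only boundedness and convexity. Let $r>0$ be the inradius of $\O$, so that $B_{r}(x_{0})\subset\O$ for some $x_{0}$, and for $0<t<r$ put $\lambda=1-t/r\in(0,1)$. Since $\O$ is convex, $\lambda\O+(1-\lambda)B_{r}(x_{0})\subset\lambda\O+(1-\lambda)\O=\O$; on the other hand this Minkowski sum equals the closed $t$–neighbourhood of the homothetic copy $\lambda\O+(1-\lambda)x_{0}$, so every point of $\lambda\O+(1-\lambda)x_{0}$ lies at distance at least $t$ from $\partial\O$. Hence $\{x\in\O:d_{x}\ge t\}\supset\lambda\O+(1-\lambda)x_{0}$, which has volume $\lambda^{3}|\O|$, and therefore
\[
    \bigl|\{x\in\O:\,d_{x}<t\}\bigr|\le\bigl(1-(1-t/r)^{3}\bigr)|\O|\le\frac{3|\O|}{r}\,t\qquad\text{for }0<t<r,
\]
while trivially $\bigl|\{x\in\O:d_{x}<t\}\bigr|\le|\O|$ for every $t$. (The same linear estimate also follows from the coarea formula together with the monotonicity of surface area under inclusion of convex bodies, or by integrating \eqref{eq:FracIntegralOverChord2} along chords; the homothety argument above is the shortest.)

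It remains to combine the two ingredients. Put $t_{0}=r/3$ and insert $\bigl|\{x\in\O:d_{x}<t\}\bigr|\le\min\{(3|\O|/r)\,t,\ |\O|\}$ into the layer–cake identity:
\begin{align*}
    \int_{\O}d_{x}^{-1+\epsilon}\,dx
    &\le(1-\epsilon)\Bigl(\frac{3|\O|}{r}\int_{0}^{t_{0}}t^{-(1-\epsilon)}\,dt+|\O|\int_{t_{0}}^{\infty}t^{-(2-\epsilon)}\,dt\Bigr)\\
    &=\frac{1-\epsilon}{\epsilon}\,\frac{3|\O|}{r}\,t_{0}^{\epsilon}+|\O|\,t_{0}^{-(1-\epsilon)} .
\end{align*}
Both integrals converge because $0<\epsilon<1$, and since $r,|\O|$ and $t_{0}=t_{0}(\O)$ depend only on $\O$ (with $t_{0}^{\epsilon}\le\max\{1,t_{0}\}$ and $t_{0}^{-(1-\epsilon)}\le\max\{1,1/t_{0}\}$), the right–hand side is at most $C(\O)/\epsilon$, which is the assertion. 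The first and last steps are just Tonelli and one–variable calculus; the only real content — and the step I expect to be the crux — is the linear-in-$t$ collar estimate, precisely the kind of boundary estimate the comparison lemmas of this section were set up to provide, and the entire blow-up rate $1/\epsilon$ comes from $\int_{0}^{t_{0}}t^{-(1-\epsilon)}\,dt=t_{0}^{\epsilon}/\epsilon$.
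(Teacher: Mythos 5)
Your proof is correct, and it takes a genuinely different and more elementary route than the paper's. The paper slices the domain integral along three orthogonal families of chords (Fubini with respect to an orthonormal frame) and invokes Lemma~\ref{lemma:FracIntegralOverChord}: first \eqref{eq:FracIntegralOverChord2} on the innermost chord integral to trade $d^{-1+\epsilon}$ for $\tfrac{1}{\epsilon}\,d^{-\frac12+\epsilon}$, and then \eqref{eq:FracIntegralOverChord} on the remaining two, so the $1/\epsilon$ blow-up and the constant are inherited from the chord lemma and, transitively, from the curvature comparison machinery used to prove it. You instead reduce, via the layer-cake identity and Tonelli, to a bound on the collar volume $\bigl|\{x\in\O:\,d_x<t\}\bigr|$, and you prove the needed linear-in-$t$ estimate by a Minkowski-sum/homothety argument using only the inradius; the $1/\epsilon$ then comes out of the elementary one-dimensional integral $\int_0^{t_0} t^{-(1-\epsilon)}\,dt$. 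Your route makes the constant depend on $\O$ only through $|\O|$ and the inradius $r$, and (as you correctly note) it uses neither the positive curvature hypothesis nor the $C^2$ regularity of $\partial\O$ -- only boundedness and convexity -- so the statement you actually prove is strictly more general. What the paper's route buys is internal economy within this section: Lemma~\ref{lemma:FracIntegralOverChord} is already established and heavily reused, so re-running it avoids an independent geometric ingredient. Both arguments are sound; yours is the more portable and self-contained one.
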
 
\begin{proof}
Let $y$ be an interior point of $\O$ and $\{ u,v,w\}$ be an orthonormal basis for $\R^3$. We note that 
\begin{equation}\begin{split}
    &\int_{\O} \frac{1}{d_x^{1-\epsilon}} \, dx \\
    =&\int^{|q_+(y,u)-y|}_{-|q_-(y,u)-y|}\int^{|q_+(y+ru,v)-(y+ru)|}_{-|q_-(y+ru,v)-(y+ru)|}\int^{|q_+(y+ru+sv,w)-(y+ru+sv)|}_{-|q_-(y+ru+sv,w)-(y+ru+sv)|}d_{y+ru+sv+tw}^{-1+\epsilon}\, dtdsdr.
\end{split}\end{equation}
According to Lemma~\ref{lemma:FracIntegralOverChord}, it follows that 
\begin{equation}\begin{split}
    &\int^{|q_+(y,u)-y|}_{-|q_-(y,u)-y|}\int^{|q_+(y+ru,v)-(y+ru)|}_{-|q_-(y+ru,v)-(y+ru)|}\int^{|q_+(y+ru+sv,w)-(y+ru+sv)|}_{-|q_-(y+ru+sv,w)-(y+ru+sv)|}d_{y+ru+sv+tw}^{-1+\epsilon}\, dtdsdr \\
    &\,  \leq\int^{|q_+(y,u)-y|}_{-|q_-(y,u)-y|}\int^{|q_+(y+ru,v)-(y+ru)|}_{-|q_-(y+ru,v)-(y+ru)|}  \frac{C}{\epsilon}\,d_{y+ru+sv}^{-\frac{1}{2}+\epsilon}\, dsdr \\
    &\, \leq \frac{C}{\epsilon} \int^{|q_+(y,u)-y|}_{-|q_-(y,u)-y|} C \,dr \\
    &\, \leq \frac{C}{\epsilon}.
\end{split}\end{equation}
\end{proof}
\section{Regularity of transport equation in a convex domain} \label{sec:transport}
Hereafter, $\O$ denotes a bounded convex domain satisfying positive curvature condition in Definition~\ref{defi:PositiveCurvature} and $g$ denotes an incoming data satisfying Assumption~\ref{assump:incoming}. 
\begin{lem} \label{lemma:jg}
    Let $Jg$ as defined by \eqref{eq:jgdefinition}. Then $Jg\in L^2_v(\R^3;H^{1-\epsilon}_x(\O))$ for any small $\epsilon>0$.
\end{lem}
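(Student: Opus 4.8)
The plan is to verify directly the two conditions in Definition~\ref{FracSobolev} defining $L^2_v(\R^3;H^{1-\epsilon}_x(\O))$ with $s=1-\epsilon$. The $L^2$ bound is immediate: since $0<e^{-\nu(v)\tau_-(x,v)}\le1$, Assumption~\ref{assump:incoming} gives $|(Jg)(x,v)|\le Ce^{-a|v|^2}$, and as $\O$ is bounded, $\|Jg\|_{L^2(\O\times\R^3)}^2\le C^2|\O|\int_{\R^3}e^{-2a|v|^2}\,dv<\infty$. The substance is the finiteness of
\[
\mathcal I:=\int_{\R^3}\int_\O\int_\O\frac{|(Jg)(x,v)-(Jg)(y,v)|^2}{|x-y|^{5-2\epsilon}}\,dx\,dy\,dv .
\]
Here I would fix $v\ne0$, use the $(x,y)$-symmetry of the integrand to restrict to $\{\,|x-q_-(x,v)|\le|y-q_-(y,v)|\,\}$ so that Proposition~\ref{prop:ProjDistance2} is applicable, and there split $(Jg)(x,v)-(Jg)(y,v)=A+B$ with $A=e^{-\nu(v)\tau_-(x,v)}[g(q_-(x,v),v)-g(q_-(y,v),v)]$ and $B=[e^{-\nu(v)\tau_-(x,v)}-e^{-\nu(v)\tau_-(y,v)}]g(q_-(y,v),v)$.

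The term $B$ is the easy one, since the Gaussian decay of $g$ is already present. I would use $|e^{-\nu(v)\tau_-(x,v)}-e^{-\nu(v)\tau_-(y,v)}|\le\nu(v)|\tau_-(x,v)-\tau_-(y,v)|$ together with $\tau_-(x,v)=|x-q_-(x,v)|/|v|$ and the second inequality of Proposition~\ref{prop:ProjDistance2} to get $|\tau_-(x,v)-\tau_-(y,v)|\le 2|x-y|/(|v|N_-(x,v))$, and $|g(q_-(y,v),v)|\le Ce^{-a|v|^2}$, so that $|B|^2/|x-y|^{5-2\epsilon}\lesssim \nu(v)^2|v|^{-2}e^{-2a|v|^2}N_-(x,v)^{-2}|x-y|^{-(3-2\epsilon)}$. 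Then $\int_\O|x-y|^{-(3-2\epsilon)}\,dy\le C(\O,\epsilon)$ uniformly in $x$ since $\O$ is bounded and $3-2\epsilon<3$; for the $x$-integral I would use the change of variables $x=q+tv$ ($q\in\Gamma_-(v)$, $t\in(0,\tau_+(q,v))$) with weight $|v\cdot n(q)|$ recalled in Section~\ref{sec:BoltzmannEquation}, under which $N_-(x,v)=|n(q)\cdot\hat v|$ depends on $q$ only, together with Proposition~\ref{prop:ChordBound} in the form $|v|\tau_+(q,v)=|q_+(q,v)-q|\le C|n(q)\cdot\hat v|$, giving
\[
\int_\O\frac{dx}{N_-(x,v)^{2}}=\int_{\Gamma_-(v)}\frac{|v\cdot n(q)|\,\tau_+(q,v)}{|n(q)\cdot\hat v|^{2}}\,d\Sigma(q)\le C\int_{\Gamma_-(v)}d\Sigma(q)\le C|\partial\O|
\]
uniformly in $v$; finally $\int_{\R^3}\nu(v)^2|v|^{-2}e^{-2a|v|^2}\,dv<\infty$ because $|v|^{-2}$ is integrable near $0$ in $\R^3$ and $\nu(v)\lesssim(1+|v|)^\gamma$ is absorbed by the Gaussian at infinity. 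Thus the $B$-part of $\mathcal I$ is finite.

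For $A$, the Lipschitz estimate \eqref{eq:condition3} alone is not enough: it produces the right power of $|x-y|$ but no decay in $v$, and $\int_{\R^3}1\,dv=\infty$. I would therefore interpolate it against the Gaussian bound \eqref{eq:condition2}: for any $\delta\in(0,1)$,
\[
|g(q_-(x,v),v)-g(q_-(y,v),v)|\le\min\bigl(C|q_-(x,v)-q_-(y,v)|,\,2Ce^{-a|v|^2}\bigr)\le 2C\,|q_-(x,v)-q_-(y,v)|^{1-\delta}e^{-a\delta|v|^2},
\]
then use the first inequality of Proposition~\ref{prop:ProjDistance2} and $e^{-\nu(v)\tau_-(x,v)}\le1$ to obtain $|A|^2/|x-y|^{5-2\epsilon}\lesssim e^{-2a\delta|v|^2}N_-(x,v)^{-2(1-\delta)}|x-y|^{-(3-2\epsilon+2\delta)}$. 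Now one picks $\delta\in(0,\epsilon)$, say $\delta=\epsilon/2$: then $3-2\epsilon+2\delta<3$, so $\int_\O|x-y|^{-(3-2\epsilon+2\delta)}\,dy\le C(\O,\epsilon)$ uniformly in $x$; the $x$-integral is estimated exactly as for $B$, $\int_\O N_-(x,v)^{-2(1-\delta)}\,dx\le C\int_{\Gamma_-(v)}|n(q)\cdot\hat v|^{2\delta}\,d\Sigma(q)\le C|\partial\O|$; and $\int_{\R^3}e^{-2a\delta|v|^2}\,dv<\infty$. Hence the $A$-part of $\mathcal I$ is finite, so $\mathcal I<\infty$, and since this holds for every $\epsilon\in(0,1)$ we conclude $Jg\in L^2_v(\R^3;H^{1-\epsilon}_x(\O))$.

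The step I expect to be the main obstacle is the handling of $A$ and, within it, the near-critical geometry. One is forced to interpolate between a Lipschitz and a Gaussian bound, which caps the interpolation exponent $\delta$ strictly below $\epsilon$ and hence leaves a genuinely fractional singularity $|x-y|^{-(3-2\epsilon+2\delta)}$ (integrable only because $\O$ is bounded) together with a singular weight $N_-(x,v)^{-2(1-\delta)}$ whose exponent approaches the critical value $2$ as $\epsilon\to0$. This weight is tamed precisely by the positive curvature of $\partial\O$: the Jacobian $|v\cdot n(q)|$ of the change of variables supplies one power of $N_-$ and Proposition~\ref{prop:ChordBound} (which rests on the uniform sphere-enclosing condition of Proposition~\ref{prop:comparison}) supplies another, collapsing $N_-^{-2(1-\delta)}$ to the harmless $|n(q)\cdot\hat v|^{2\delta}$. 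If one prefers to avoid the chord bound one may instead use only $|v|\tau_+(q,v)\le\diam\O$ and prove $\sup_{\hat v\in\S^2}\int_{\Gamma_-(v)}|n(q)\cdot\hat v|^{-(1-2\delta)}\,d\Sigma(q)<\infty$ for $\delta>0$ directly — again a consequence of the positive curvature, e.g.\ by pushing the surface measure forward along the Gauss map, whose Jacobian is the Gaussian curvature and hence bounded below.
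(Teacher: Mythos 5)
Your proof is correct, and it takes a genuinely different (in fact shorter) route than the paper's, even though both start from the same splitting of $Jg(x,v)-Jg(y,v)$, the same restriction to $D_1$ by symmetry, and the same Lipschitz--versus--Gaussian interpolation for the term $A$ (your choice $\delta=\epsilon/2$ reproduces exactly the paper's $|q_-(x,v)-q_-(y,v)|^{2-\epsilon}e^{-\epsilon a|v|^2}$). The divergence happens in how the singular weight in $N_-(x,v)$ is absorbed. The paper does \emph{not} throw away the exponential: it applies Proposition~\ref{prop:ProjDistance} to bound $e^{-(2-\epsilon)\nu(v)\tau_-(x,v)}\lesssim N_-(x,v)^{1-\epsilon}|v|^{1-\epsilon}\nu(v)^{-(1-\epsilon)}d_x^{-(1-\epsilon)}$, which trades all but one power of $N_-^{-1}$ for a $d_x^{-(1-\epsilon)}$ singularity. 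It then changes variables in $v$ via $v=(x-z)l$, $z\in\partial\O$ (whose Jacobian supplies the remaining factor of $N_-$), and closes the estimate with Lemma~\ref{lemma:SurfaceIntegral} ($\int_{\partial\O}|x-z|^{-2}\,d\Sigma\lesssim|\log d_x|+1$) and Corollary~\ref{cor:DistanceIntegral} ($\int_\O d_x^{-1+\epsilon}\,dx\lesssim1/\epsilon$). You instead discard the exponential by $e^{-\nu(v)\tau_-(x,v)}\le1$, keep the full $N_-^{-2(1-\delta)}$ weight, and absorb it by passing to flow coordinates $x=q+tv$ together with Proposition~\ref{prop:ChordBound}, which gives $\sup_{v}\int_\O N_-(x,v)^{-2}\,dx\le C|\partial\O|$ at once. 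Both uses of convexity rest on Proposition~\ref{prop:comparison}, but your route reaches the conclusion through a single, elementary geometric input (the chord bound) and avoids the logarithmic surface-integral lemma and the near-boundary distance integral entirely; the paper's route is heavier but is structured so that the same machinery (Lemma~\ref{lemma:SurfaceIntegral}, Corollary~\ref{cor:DistanceIntegral}, Lemma~\ref{lemma:FracIntegralOverChord}) can be reused verbatim in Lemma~\ref{lemma:SOPreservation} and later in Section~\ref{sec:averaging}, which is presumably why the authors organized it that way. Your remark at the end is also accurate: if one drops the chord bound and uses only $|v|\tau_+(q,v)\le\diam\O$, the needed inequality $\sup_{\hat v}\int_{\Gamma_-(v)}|n(q)\cdot\hat v|^{-1+2\delta}\,d\Sigma(q)<\infty$ for $\delta>0$ does follow from the Gauss-map Jacobian being bounded below, but that version fails at $\delta=0$, so the chord bound is the strictly stronger device and is what lets your $B$-term estimate get by without any interpolation at all.
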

\begin{proof}
    To prove the lemma, for given $\epsilon>0$ and an incoming data $g$ satisfying Assumption~\ref{assump:incoming}, it suffices to show the boundedness of integral 
    \begin{equation} \label{eq:jgIntegral}
        \int\limits_{\R^3}\int\limits_{\O}\int\limits_{\O} \frac{|Jg(x,v)-Jg(y,v)|^2}{|x-y|^{5-2\epsilon}}\, dxdydv.
    \end{equation}
    We partition the domain of integration $D:=\R^3\times\O\times\O$ into $D_1,D_2$ as below. 
    \begin{align}
        D_1&:=\{(x,y,v)\in D:\, |x-q_-(x,v)|\leq |y-q_-(y,v)| \}, \label{eq:domainD1} \\
        D_2&:=\{(x,y,v)\in D:\, |x-q_-(x,v)|> |y-q_-(y,v)| \}.
    \end{align}
    In view of symmetry of $D_1,D_2$, we only need to calculate the integral \eqref{eq:jgIntegral} over $D_1$. Notice that 
    \begin{equation} \label{eq:jg} 
    \begin{split}
        |Jg(x,v)-Jg(y,v)|^2\leq&2e^{-2\nu(v)\tau_-(x,v)}|g(q_-(x,v),v)-g(q_-(y,v),v)|^2 \\
        &+2|g(q_-(y,v),v)|^2|e^{-\nu(v)\tau_-(x,v)}- e^{-\nu(v)\tau_-(y,v)}|^2. 
    \end{split}
    \end{equation}
    Consequently, we have
    \begin{equation}
        \int_{D_1} \frac{|Jg(x,v)-Jg(y,v)|^2}{|x-y|^{5-2\epsilon}}\, dxdydv \leq I_1 + I_2,
    \end{equation}
    where
    \begin{align}
        I_1 &:= \int_{D_1} \frac{2e^{-2\nu(v)\tau_-(x,v)}|g(q_-(x,v),v)-g(q_-(y,v),v)|^2 }{|x-y|^{5-2\epsilon}}\, dxdydv, \\
        I_2 &:= \int_{D_1} \frac{2|g(q_-(y,v),v)|^2|e^{-\nu(v)\tau_-(x,v)}- e^{-\nu(v)\tau_-(y,v)}|^2}{|x-y|^{5-2\epsilon}}\, dxdydv.
    \end{align}
    To estimate $I_1$, by H\"{o}lder continuity \eqref{eq:condition3} and the 
    fact from condition \eqref{eq:condition2} that
    \begin{equation} \label{eq:jgI1estimate1}
        |g(q_-(x,v),v)-g(q_-(y,v),v)|\leq C e^{-a|v|^2},
    \end{equation}
    we obtain
    \begin{equation}\begin{split} \label{eq:jgI1estimate2}
        &e^{-2\nu(v)\tau_-(x,v)}|g(q_-(x,v),v)-g(q_-(y,v),v)|^{2} \\
        &\qquad= e^{-2\nu(v)\tau_-(x,v)}|g(q_-(x,v),v)-g(q_-(y,v),v)|^{2-\epsilon} \\
        &\qquad\quad \times |g(q_-(x,v),v)-g(q_-(y,v),v)|^{\epsilon} \\
        &\qquad\leq C\, e^{-2\nu(v)\tau_-(x,v)}|q_-(x,v)-q_-(y,v)|^{2-\epsilon} e^{-\epsilon a |v|^2}.
    \end{split}\end{equation}
    According to Proposition~\ref{prop:ProjDistance2}, we have 
    \begin{equation} \label{eq:jgI1estimate3}
        |q_-(x,v)-q_-(y,v)|\leq \frac 1 {N_-(x,v)}|x-y|,
    \end{equation}
    whenever $|x-q_-(x,v)|\leq |y-q_-(y,v)|$. Proposition~\ref{prop:ProjDistance} implies that
    \begin{equation}\begin{split} \label{eq:jgI1estimate4}
        e^{-2\nu(v)\tau_-(x,v)} \leq& \frac{1}{\nu(v)^{1-\epsilon}{\tau_-(x,v)^{1-\epsilon}}} \\
        \leq& \frac{1}{\nu(v)^{1-\epsilon}{\tau_-(x,v)^{1-\epsilon}}}\cdot \left( \frac{N_-(x,v)|x-q_-(x,v)|}{d_x} \right)^{1-\epsilon} \\
        \leq& \frac{N_-(x,v)^{1-\epsilon}|v|^{1-\epsilon}}{\nu(v)^{1-\epsilon}d_x^{1-\epsilon}}.
    \end{split}\end{equation}
    Combining \eqref{eq:jgI1estimate1},\eqref{eq:jgI1estimate2},\eqref{eq:jgI1estimate3}, and \eqref{eq:jgI1estimate4}, we have
    \begin{equation}\begin{split} \label{eq:jgIntegral1}
        I_1&\leq C\, \int\limits_{D_1} \frac{|v|^{1-\epsilon}e^{-\epsilon a|v|^2}}{\nu(v)^{1-\epsilon}d_x^{1-\epsilon}N_-(x,v)|x-y|^{3-\epsilon}}\, dxdydv \\
        & \leq C\, \int\limits_{\O} \int\limits_{\R^3} \int\limits_{\O} \frac{|v|^{1-\epsilon}e^{-\epsilon a|v|^2}}{\nu(v)^{1-\epsilon}d_x^{1-\epsilon}N_-(x,v)|x-y|^{3-\epsilon}}\, dydvdx \\
        & \leq C\, \int\limits_{\O} \int\limits_{\R^3} \frac{|v|^{1-\epsilon}e^{-\epsilon a|v|^2}}{d_x^{1-\epsilon}{N_-(x,v)}}\, dvdx. 
    \end{split}\end{equation}
    For fixed $x\in\O$, we introduce a change of variable $v=(x-z)l$ with $z\in\partial\O$ and $0\leq l<\infty$. For any local chart of $\partial\O$, 
    say $z=\phi(\alpha,\beta)$, we note that 
    \[
    v(\alpha,\beta,l)=\bigl(x-\phi(\alpha,\beta)\bigr)l
    \]
    and the Jacobian for $v$ is given by 
    \begin{equation}\begin{split}
        \left| \det \mathbf J_{v}(\alpha,\beta,l) \right|&=\left|  \partial_l v\cdot (\partial_\alpha v \times \partial_\beta v) \right| \\
        &=l^2\left| (x-\phi(\alpha,\beta))\cdot (\phi_\alpha \times \phi_\beta) \right| \\
        &=l^2\left| (x-\phi(\alpha,\beta))\cdot n(\phi(\alpha,\beta))  \right| |\phi_\alpha \times \phi_\beta|.
    \end{split}\end{equation}
    Covering $\partial\O$ by finitely many such coordinate charts, we obtain the formula for change of variables
    \begin{equation} \label{eq:jgChangeOfVariable}
        \int_{\R^3}h(v)\, dv= \int_{\partial\O}\int^\infty_0 h((x-z)l)\, l^2\left| (x-z)\cdot n(z) \right| \, dld\Sigma(z),
    \end{equation}
    for any $h\in L^1(\R^3)$. Therefore, by the change of variables, we obtain 
    \begin{equation} \begin{split} \label{eq:jgIntegral1ChangeOfVariable1}
        &\int\limits_{\O} \int\limits_{\R^3} \frac{|v|^{1-\epsilon}e^{-\epsilon a|v|^2}}{d_x^{1-\epsilon}{N_-(x,v)}}\, dvdx   \\
        &\qquad= \int\limits_{\O} \int\limits_{\partial\O}\int\limits^\infty_0 \frac{ e^{-\epsilon a|x-z|^2l^2}|x-z|^{1-\epsilon}l^{1-\epsilon}}{d_x^{1-\epsilon}\cdot \frac{|(x-z)\cdot n(z)|}{|x-z|}}\, l^2|(x-z)\cdot n(z)|dld\Sigma(z)dx \\
        &\qquad= \int\limits_{\O} \int\limits_{\partial\O}\int\limits^\infty_0  \frac{e^{-\epsilon a|x-z|^2l^2}|x-z|^{2-\epsilon}l^{3-\epsilon}}{d_x^{1-\epsilon}}\, dld\Sigma(z)dx. 
    \end{split}\end{equation}
    Letting $s=|x-z| \, l$ yields 
    \begin{equation}\begin{split} \label{eq:jgIntegral1ChangeOfVariable2}
        I_1&\leq C\,\int\limits_{\O} \int\limits_{\partial\O}\int\limits^\infty_0  \frac{e^{-\epsilon a|x-z|^2l^2}|x-z|^{2-\epsilon}l^{3-\epsilon}}{d_x^{1-\epsilon}}\, dld\Sigma(z)dx \\
        &= C\, \int\limits_{\O} \int\limits_{\partial\O}\int\limits^\infty_0 \frac{e^{-\epsilon as^2}s^{3-\epsilon}}{d_x^{1-\epsilon}|x-z|^2}\, dsd\Sigma(z)dx  \\
        &\leq C\,\int\limits_{\O} \int\limits_{\partial\O} \frac{1}{d_x^{1-\epsilon}|x-z|^2}\, d\Sigma(z)dx  \\
        &\leq C\, \int_{\O} \frac {|\log(d_x)|+1}{d_x^{1-\epsilon}} \,dx \\
        &\leq C.
    \end{split}\end{equation}
    The third inequality above follows from Lemma~\ref{lemma:SurfaceIntegral} and the last inequality follows from 
    Corollary~\ref{cor:DistanceIntegral} and the fact 
    \begin{equation}
        \frac {|\log(d_x)|}{d_x^{1-\epsilon}} \leq \frac{1}{d_x^{1-\frac{\epsilon}{2}}}
    \end{equation}
    for small $d_x>0$.

    Concerning $I_2$, the condition \eqref{eq:condition2} implies that
    \begin{equation}\begin{split} \label{eq:jgI2estimate1}
        &|g(q_-(y,v),v)|^2|e^{-\nu(v)\tau_-(x,v)}- e^{-\nu(v)\tau_-(y,v)}|^2 \\
        &\qquad \leq C e^{-2a|v|^2}|e^{-\nu(v)\tau_-(x,v)}- e^{-\nu(v)\tau_-(y,v)}|^{2-\epsilon}.
    \end{split}\end{equation}
    By the mean value theorem and Proposition~\ref{prop:ProjDistance2}, we obtain
    \begin{equation}\begin{split} \label{eq:jgI2estimate2}
        |e^{-\nu(v)\tau_-(x,v)}- e^{-\nu(v)\tau_-(y,v)}|&\leq \nu(v)e^{-\nu(v)\tau_-(x,v)}|\tau_-(x,v)-\tau_-(y,v)| \\
        &\leq\frac{2\nu(v)e^{-\nu(v)\tau_-(x,v)}|x-y|}{N_-(x,v)|v|}.
    \end{split}\end{equation}
    Proposition~\ref{prop:ProjDistance} implies that 
    \begin{equation}\begin{split} \label{eq:jgI2estimate3}
        e^{-(2-\epsilon)\nu(v)\tau_-(x,v)}\leq& \frac 1 {\nu(v)^{1-\epsilon}\tau_-(x,v)^{1-\epsilon}} \\
        \leq& \frac{N_-(x,v)^{1-\epsilon}|v|^{1-\epsilon}}{\nu(v)^{1-\epsilon}d_x^{1-\epsilon}}.
    \end{split}\end{equation}
    Taking \eqref{eq:jgI2estimate1},\eqref{eq:jgI2estimate2}, and \eqref{eq:jgI2estimate3} into consideration, we obtain
    \begin{equation} \label{eq:jgIntegral2}
       I_2  \leq C \, \int_{D_1} \frac{e^{-2a|v|^2}\nu(v)\,dxdydv}{|x-y|^{3-\epsilon}N_-(x,v)|v|d_x^{1-\epsilon}}.
    \end{equation}
    Comparing \eqref{eq:jgIntegral2} with \eqref{eq:jgIntegral1}, one can repeat the steps in \eqref{eq:jgIntegral1}, \eqref{eq:jgIntegral1ChangeOfVariable1}, and \eqref{eq:jgIntegral1ChangeOfVariable2} to obtain the following boundedness,
    \[
        I_2 \leq C.
    \]
    This completes the proof. 
\end{proof}
Since $K:L^2(\R^2_v)\to L^2(\R^2_v)$ is a bounded operator regarding velocity variable, we can see that $K$ would preserve regularity in space variable. 
\begin{prop}
    The operator $K: L^2_v(\R^3;H^{1-\epsilon}_x(\O))\to L^2_v(\R^3;H^{1-\epsilon}_x(\O))$ is bounded for any small $\epsilon>0$.
\end{prop}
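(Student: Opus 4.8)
The plan is to exploit the fact that $K$ acts only on the velocity variable and therefore commutes with the spatial difference quotients defining the Slobodeckij seminorm; accordingly we treat the $L^2$ part of the norm and the seminorm part separately, and in each case reduce to the $L^2_v$-boundedness of $K$ already established in Proposition~\ref{prop:KLp}.

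First, for the $L^2$ part: applying Proposition~\ref{prop:KLp} with $p=2$ for a.e.\ fixed $x\in\O$ gives
\begin{equation*}
    \int_{\R^3} |K(f)(x,v)|^2\,dv \leq C\int_{\R^3}|f(x,v_*)|^2\,dv_*,
\end{equation*}
and integrating in $x$ over $\O$ and invoking Tonelli's theorem yields $\|K(f)\|_{L^2(\O\times\R^3)}\leq C\|f\|_{L^2(\O\times\R^3)}$. For the seminorm, fix $x,y\in\O$ and note that, since $K$ does not involve the space variable,
\begin{equation*}
    K(f)(x,v)-K(f)(y,v)=\int_{\R^3}\bigl(f(x,v_*)-f(y,v_*)\bigr)k(v_*,v)\,dv_*,
\end{equation*}
i.e.\ $K$ applied to the function $v_*\mapsto f(x,v_*)-f(y,v_*)$. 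Applying Proposition~\ref{prop:KLp} with $p=2$ once more,
\begin{equation*}
    \int_{\R^3}|K(f)(x,v)-K(f)(y,v)|^2\,dv\leq C\int_{\R^3}|f(x,v_*)-f(y,v_*)|^2\,dv_*.
\end{equation*}
Dividing by $|x-y|^{5-2\epsilon}$, integrating over $(x,y)\in\O\times\O$, and using Tonelli's theorem to interchange the $v$- and $(x,y)$-integrations gives
\begin{equation*}
    \int_{\R^3}\int_\O\int_\O\frac{|K(f)(x,v)-K(f)(y,v)|^2}{|x-y|^{5-2\epsilon}}\,dxdydv\leq C\int_{\R^3}\int_\O\int_\O\frac{|f(x,v_*)-f(y,v_*)|^2}{|x-y|^{5-2\epsilon}}\,dxdydv_*,
\end{equation*}
whose right-hand side is finite precisely because $f\in L^2_v(\R^3;H^{1-\epsilon}_x(\O))$. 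Combining the two bounds yields the asserted continuity of $K$.

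There is no genuine obstacle in this argument; the only step meriting a word of care is the use of Tonelli's theorem to interchange the integration in $v$ with that in $(x,y)$, which is legitimate because the integrands are nonnegative. In fact the reasoning shows, more generally, that any operator bounded on $L^2_v(\R^3)$ and not involving the space variable maps $L^2_v(\R^3;H^s_x(\O))$ into itself for every $s\in(0,1)$, with the operator norm controlled by that on $L^2_v(\R^3)$.
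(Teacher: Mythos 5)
Your proof is correct and matches the paper's intent: the paper states this proposition with only the one-line remark that $K$ is bounded on $L^2_v$ and acts solely in the velocity variable, which is precisely what you formalize by commuting $K$ with the spatial difference quotient and invoking Proposition~\ref{prop:KLp} together with Tonelli.
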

Next, we deal with regularity preservation of $\SO$. Recall 
\begin{equation}
    \SO h(x,v)=\int^{\tau_-(x,v)}_0 e^{-\nu(v)s}h(x-sv,v)\,ds.
\end{equation}
\begin{lem} \label{lemma:SOPreservation}
    Suppose $h\in L^2_v(\R^3;H^{1-\epsilon}_x(\O))$ and there exist positive constants $a$ and $C$ such that 
    $|h(x,v)|\leq C e^{-a|v|^2}$. Then $\SO h\in L^2_v(\R^3;H^{1-\epsilon}_x(\O))$ for any small $\epsilon>0$.
\end{lem}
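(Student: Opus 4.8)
The plan is to mirror the two-region argument of Lemma~\ref{lemma:jg}. The $L^2(\O\times\R^3)$ bound is immediate, since $\SO$ is bounded on $L^2(\O\times\R^3)$ and the hypothesis already includes $h\in L^2(\O\times\R^3)$; so the task is to bound the seminorm $\int_{\R^3}\int_\O\int_\O\frac{|\SO h(x,v)-\SO h(y,v)|^2}{|x-y|^{5-2\epsilon}}\,dx\,dy\,dv$. Splitting $\R^3\times\O\times\O$ into $D_1=\{(x,y,v):\tau_-(x,v)\le\tau_-(y,v)\}$ and its mirror image, it suffices by symmetry to estimate over $D_1$. On $D_1$, since $s\le\tau_-(x,v)\le\tau_-(y,v)$ forces $x-sv\in\O$ and $y-sv\in\O$, we write $\SO h(x,v)-\SO h(y,v)=A+B$ with
\[
 A=\int_0^{\tau_-(x,v)}e^{-\nu(v)s}\bigl(h(x-sv,v)-h(y-sv,v)\bigr)\,ds,\qquad
 B=-\int_{\tau_-(x,v)}^{\tau_-(y,v)}e^{-\nu(v)s}\,h(y-sv,v)\,ds,
\]
so that $|\SO h(x,v)-\SO h(y,v)|^2\le 2|A|^2+2|B|^2$ and the two contributions are treated separately.

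For the $A$-contribution, Cauchy--Schwarz in $s$ together with $\int_0^\infty e^{-\nu(v)s}\,ds\le\nu_0^{-1}$ yields $|A|^2\le\nu_0^{-1}\int_0^{\tau_-(x,v)}e^{-\nu(v)s}\,|h(x-sv,v)-h(y-sv,v)|^2\,ds$. Inserting this, using Tonelli to move the $s$-integral outside the $(x,y,v)$-integral (replacing the range $[0,\tau_-(x,v)]$ by $[0,\infty)$ with an indicator), and then performing, for each fixed $(s,v)$, the translation $x\mapsto x-sv$, $y\mapsto y-sv$ — which has unit Jacobian, preserves $|x-y|$, and carries the relevant region into $\O\times\O$ — bounds the $A$-contribution by $\tfrac{2}{\nu_0}\int_{\R^3}\bigl(\int_0^\infty e^{-\nu(v)s}\,ds\bigr)\bigl(\int_\O\int_\O\tfrac{|h(x',v)-h(y',v)|^2}{|x'-y'|^{5-2\epsilon}}\,dx'\,dy'\bigr)\,dv\le\tfrac{2}{\nu_0^2}\,\|h\|_{L^2_v(\R^3;H^{1-\epsilon}_x(\O))}^2<\infty$. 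This step uses only the finiteness of the Slobodeckij seminorm of $h$, not the Gaussian bound.

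For the $B$-contribution the hypothesis $|h(y-sv,v)|\le Ce^{-a|v|^2}$ gives $|B|\le Ce^{-a|v|^2}\nu(v)^{-1}\bigl|e^{-\nu(v)\tau_-(x,v)}-e^{-\nu(v)\tau_-(y,v)}\bigr|$, and from here the estimate is essentially identical to the treatment of $I_2$ in the proof of Lemma~\ref{lemma:jg}. One writes $\bigl|e^{-\nu(v)\tau_-(x,v)}-e^{-\nu(v)\tau_-(y,v)}\bigr|^2\le\bigl|e^{-\nu(v)\tau_-(x,v)}-e^{-\nu(v)\tau_-(y,v)}\bigr|^{2-\epsilon}$ (both exponentials lying in $(0,1]$), applies the mean value theorem and Proposition~\ref{prop:ProjDistance2} to extract a factor $\bigl(|x-y|/(N_-(x,v)|v|)\bigr)^{2-\epsilon}$, and invokes Proposition~\ref{prop:ProjDistance} exactly as in \eqref{eq:jgI2estimate3} to trade $e^{-(2-\epsilon)\nu(v)\tau_-(x,v)}$ for $\bigl(N_-(x,v)|v|/(\nu(v)d_x)\bigr)^{1-\epsilon}$; this leaves only a single power of $N_-(x,v)$ together with the weight $d_x^{-(1-\epsilon)}$. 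Integrating out $y$ via $\int_\O|x-y|^{-(3-\epsilon)}\,dy\le C$ reduces the $B$-contribution to $\int_\O\int_{\R^3}\tfrac{e^{-2a|v|^2}}{\nu(v)\,N_-(x,v)\,|v|\,d_x^{1-\epsilon}}\,dv\,dx$; the change of variables $v=(x-z)l$, $z\in\partial\O$, of \eqref{eq:jgChangeOfVariable} together with the Gaussian weight converts the inner velocity integral into a constant multiple of $\int_{\partial\O}|x-z|^{-2}\,d\Sigma(z)$, and Lemma~\ref{lemma:SurfaceIntegral} followed by Corollary~\ref{cor:DistanceIntegral} closes the bound exactly as in \eqref{eq:jgIntegral1ChangeOfVariable2}.

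The new ingredient relative to Lemma~\ref{lemma:jg} is the $A$-term, but that is the easy half. The main technical obstacle is, as before, the $B$-term: the cancellation must be arranged so that at most one power of $N_-(x,v)$ and at most $d_x^{-(1-\epsilon)}$ survive, so that both the grazing ($N_-\to 0$) velocity integral and the near-boundary ($d_x\to 0$) space integral converge — which is exactly what Proposition~\ref{prop:ProjDistance}, Proposition~\ref{prop:ProjDistance2}, Lemma~\ref{lemma:SurfaceIntegral}, and Corollary~\ref{cor:DistanceIntegral} were set up to deliver.
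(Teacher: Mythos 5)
Your proof is correct, and the overall architecture — split the seminorm over $D_1$ by symmetry, decompose $\SO h(x,v)-\SO h(y,v)$ into the ``common range'' part $A$ and the ``tail'' part $B$, then treat $A$ by Cauchy--Schwarz and a translation and $B$ by the Gaussian bound plus the $I_2$ machinery of Lemma~\ref{lemma:jg} — matches the paper's. The one place where you deviate is genuinely different and in fact cleaner: for the $A$-term the paper replaces $h$ with a Sobolev extension $\bar h$ to all of $\R^3$ (invoking Theorem~5.4 of \cite{di11}), enlarges the $(x,y)$-integration to $\R^3\times\R^3$, and only then translates; you instead observe that on $D_1$ the constraint $0\le s\le\tau_-(x,v)\le\tau_-(y,v)$ forces $x-sv,\,y-sv\in\O$, so that for each fixed $(s,v)$ the unit-Jacobian, $|x-y|$-preserving translation $(x,y)\mapsto(x-sv,y-sv)$ maps the relevant slice \emph{into} $\O\times\O$, and the seminorm of $h$ on $\O$ already controls the resulting integral without any extension. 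This avoids an appeal to the fractional Sobolev extension theorem, at no cost. The $B$-term treatment is essentially verbatim the paper's $I_2$ estimate — same $|{\cdot}|^2\le|{\cdot}|^{2-\epsilon}$ trick, the same use of the mean value theorem together with Propositions~\ref{prop:ProjDistance} and \ref{prop:ProjDistance2} to leave one power of $N_-(x,v)^{-1}$ and the weight $d_x^{-(1-\epsilon)}$, the same change of variables \eqref{eq:jgChangeOfVariable}, and the same closing via Lemma~\ref{lemma:SurfaceIntegral} and Corollary~\ref{cor:DistanceIntegral}.
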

\begin{proof}
    To prove the lemma, for given $\epsilon>0$ and $h\in L^2_v(\R^3;H^{1-\epsilon}_x(\O))$ with $|h(x,v)|\leq C e^{-a|v|^2}$, 
    it suffices to show the boundedness of integral 
    \begin{equation}
        \int\limits_{D_1} \frac{|\SO h(x,v)-\SO h(y,v)|^2}{|x-y|^{5-2\epsilon}}\, dxdydv,
    \end{equation}
    where $D_1$ as defined by \eqref{eq:domainD1}. In domain $D_1$, we have 
    \begin{equation}\begin{split}\label{eq:somega}
        |S_\O h (x,v)-S_\O h(y,v)|^2 \leq& 2 \left| \int^{\tau_-(x,v)}_0 e^{-\nu(v)s}(h(x-sv,v)-h(y-sv,v))\,ds \right|^2  \\
        &+2 \left| \int^{\tau_-(y,v)}_{\tau_-(x,v)}e^{-\nu(v)s}h(y-sv,v)\,ds \right|^2. 
    \end{split}\end{equation}
    Therefore, we have 
    \begin{equation}
        \int\limits_{D_1} \frac{|\SO h(x,v)-\SO h(y,v)|^2}{|x-y|^{5-2\epsilon}}\, dxdydv \leq I_1 +I_2,
    \end{equation}
    where
    \begin{align}
        I_1 &:= \int_{D_1} \frac{2 \left| \int^{\tau_-(x,v)}_0 e^{-\nu(v)s}(h(x-sv,v)-h(y-sv,v))\,ds \right|^2 }{|x-y|^{5-2\epsilon}}\, dxdydv, \\
        I_2 &:= \int_{D_1} \frac{2 \left| \int^{\tau_-(y,v)}_{\tau_-(x,v)}e^{-\nu(v)s}h(y-sv,v)\,ds \right|^2}{|x-y|^{5-2\epsilon}}\, dxdydv.
    \end{align}
    Concerning $I_1$, by Cauchy-Schwarz inequality, we obtain 
    \begin{equation}\begin{split} \label{eq:SOextension1}
        &\left| \int^{\tau_-(x,v)}_0 e^{-\nu(v)s}(h(x-sv,v)-h(y-sv,v))\,ds \right|^2 \\
        &\quad \leq\left( \int^{\infty}_0 e^{-\nu(v)s}\,ds\right)\left(\int^{\tau_-(x,v)}_0e^{-\nu(v)s}\bigl(h(x-sv,v)-h(y-sv,v) \bigr)^2 \,ds \right) \\
        &\quad \leq \frac 1 {\nu_0} \int^{\tau_-(x,v)}_0e^{-\nu(v)s}\bigl(h(x-sv,v)-h(y-sv,v) \bigr)^2 \,ds \\
        &\quad\leq  \frac 1 {\nu_0} \int^{\infty}_0e^{-\nu(v)s}\bigl(\bar{h}(x-sv,v)-\bar{h}(y-sv,v) \bigr)^2 \,ds,
    \end{split}\end{equation}
    where $\bar{h}\in L^2_v(\R^3;H^{1-\epsilon}_x(\R^3))$ is an extension of $h\in L^2_v(\R^3;H^{1-\epsilon}_x(\O))$ as defined in \cite[Theorem~5.4]{di11}. 
    \begin{rmk}
    By Theorem~5.4 from \cite{di11}, 
    we know $H^s(\O)$ can be continuously embedded in $H^s(\R^3)$. That is, 
    there exists a constant $C=C(s,\O)$ such that, for given $u\in H^s(\O)$, there is an extension $\overline{u}\in H^s(\R^3)$ of $u$, i.e., $\left.\overline{u}\right|_{\O}=u$, satisfying 
    \[
        \| \overline{u} \|_{ H^s(\R^3)} \leq C\, \| u \|_{ H^s(\O)}.
    \]
    For a.e. fixed $v\in\R^3$, we then define $\bar{h}(x,v)=\overline{h(\cdot,v)}(x)\in L^2_v(\R^3;H^{1-\epsilon}_x(\R^3))$. 
    \end{rmk}
    \noindent Consequently, we deduce that 
    \begin{equation}\begin{split} \label{eq:SOextension}
        I_1&\leq C\,\int\limits_{\R^3}\int\limits_{\R^3}\int\limits_{\R^3} \int^{\infty}_0\frac{e^{-\nu(v)s}\bigl(\bar{h}(x-sv,v)-\bar{h}(y-sv,v)\bigr)^2}{|x-y|^{5-2\epsilon}}  \,dsdxdydv \\
        &\leq C\,\int\limits_{\R^3}\int\limits_{\R^3}\int\limits_{\R^3} \int^{\infty}_0\frac{e^{-\nu(v)s'}\bigl(\bar{h}(x',v')-\bar{h}(y',v')\bigr)^2}{|x'-y'|^{5-2\epsilon}}  \,ds' dx'dy'dv' \\
        &\leq C \,\| \bar{h} \|^2_{ L^2_v(\R^3;H^{1-\epsilon}_x(\R^3))}  \\
        &\leq C\, \| h \|^2_{ L^2_v(\R^3;H^{1-\epsilon}_x(\O))}, 
    \end{split}\end{equation}
    where we have used the change of variables
    \begin{equation} 
        \left\{ 
            \begin{array}{l}
                v'=v, \\
                y'=y-sv, \\
                x'=x-sv, \\
                s'=s.  
            \end{array} 
            \right.
        \end{equation}

    Regarding $I_2$, we notice that 
    \begin{equation} \label{eq:SOPreservationEstimate}
        \begin{split}
        &\left| \int^{\tau_-(y,v)}_{\tau_-(x,v)}e^{-\nu(v)s}h(y-sv,v)\,ds \right|^2 \\
        &\qquad\leq C\,\left| \int^{\tau_-(y,v)}_{\tau_-(x,v)}e^{-\nu(v)s}e^{-a|v|^2}\,ds \right|^2 \\
        &\qquad = C\, e^{-2a|v|^2}\frac{1}{\nu(v)^2}|e^{-\nu(v)\tau_-(x,v)}-e^{-\nu(v)\tau_-(y,v)}|^2  \\
        &\qquad\leq C\, e^{-2a|v|^2}\frac{1}{\nu(v)^2}|e^{-\nu(v)\tau_-(x,v)}-e^{-\nu(v)\tau_-(y,v)}|^{2-\epsilon}  \\
        &\qquad\leq C\, e^{-2a|v|^2}\frac{1}{\nu(v)^\epsilon}\, e^{-(2-\epsilon)\nu(v)\tau_-(x,v)}|\tau_-(x,v)-\tau_-(y,v)|^{2-\epsilon}. 
    \end{split}\end{equation}
    Proposition~\ref{prop:ProjDistance} implies that 
    \begin{equation} \label{eq:SOPreservationEstimate1}
    \begin{split}
        e^{-(2-\epsilon)\nu(v)\tau_-(x,v)}\leq&C\,\frac{1}{\nu(v)^{1-\epsilon}\tau_-(x,v)^{1-\epsilon}}\\
        \leq&C\,  \frac{N_-(x,v)^{1-\epsilon}|v|^{1-\epsilon}}{\nu(v)^{1-\epsilon}d_x^{1-\epsilon}}.
    \end{split}\end{equation}
    By Proposition~\ref{prop:ProjDistance2}, we have 
    \begin{equation} \label{eq:SOPreservationEstimate2}
        |\tau_-(x,v)-\tau_-(y,v)|^{2-\epsilon} \leq C\, \frac{|x-y|^{2-\epsilon}}{N_-(x,v)^{2-\epsilon}|v|^{2-\epsilon}}.    
    \end{equation}
    Combining \eqref{eq:SOPreservationEstimate}-\eqref{eq:SOPreservationEstimate2}, we deduce 
    \begin{equation} \label{eq:SOPintegral}
        I_2 \leq C\, \int\limits_{D_1} \frac{e^{-2a|v|^2}}{N_-(x,v)|x-y|^{3-\epsilon}|v|d_x^{1-\epsilon}} \,dxdydv .
    \end{equation}
    To show the boundedness of above integral on the right, 
    comparing \eqref{eq:SOPintegral} with \eqref{eq:jgIntegral1}, one can repeat the steps in \eqref{eq:jgIntegral1}, \eqref{eq:jgIntegral1ChangeOfVariable1}, and \eqref{eq:jgIntegral1ChangeOfVariable2} to 
    conclude $\SO h\in L^2_v(\R^3;H^{1-\epsilon}_x(\O))$.
\end{proof}
\begin{cor}
    Let $g_i$ be as defined by \eqref{eq:giDef}. Then, \\
    $g_i \in  L^2_v(\R^3;H^{1-\epsilon}_x(\O))$ for each $i\geq 0$.   
\end{cor}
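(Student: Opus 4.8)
The plan is to proceed by induction on $i$, exploiting the recursion $g_{i+1}=\SO K g_i$ together with the single–step preservation results already established. The inductive hypothesis I would carry is the conjunction of two statements: (i) $g_i\in L^2_v(\R^3;H^{1-\epsilon}_x(\O))$ for every small $\epsilon>0$, and (ii) there is a constant $C_i$ such that $|g_i(x,v)|\leq C_i\,e^{-a|v|^2}$ for all $(x,v)\in\O\times\R^3$, where $a$ is a fixed constant with $a<\tfrac14$. One may arrange $a<\tfrac14$ from the outset by replacing the constant $a$ appearing in \eqref{eq:condition2} with $\min\{a,\tfrac18\}$, which only weakens the hypothesis on $g$.

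For the base case $i=0$ we have $g_0=Jg$. Membership in $L^2_v(\R^3;H^{1-\epsilon}_x(\O))$ is exactly Lemma~\ref{lemma:jg}. The pointwise bound (ii) follows since $|Jg(x,v)|=e^{-\nu(v)\tau_-(x,v)}|g(q_-(x,v),v)|\leq |g(q_-(x,v),v)|\leq C\,e^{-a|v|^2}$ by \eqref{eq:condition2}, using $e^{-\nu(v)\tau_-(x,v)}\leq 1$.

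For the inductive step, assuming (i) and (ii) hold for $g_i$, I would argue in three moves. First, the boundedness of $K$ on $L^2_v(\R^3;H^{1-\epsilon}_x(\O))$ (the proposition established above) gives $Kg_i\in L^2_v(\R^3;H^{1-\epsilon}_x(\O))$. Second, since $|g_i(x,v)|\leq C_i e^{-a|v|^2}$ with $a<\tfrac14$, Lemma~\ref{lemma:MaxwellianBound} yields $|Kg_i(x,v)|\leq C\,C_i\,e^{-a|v|^2}$, so the Gaussian decay rate is preserved. Thus $Kg_i$ satisfies the hypotheses of Lemma~\ref{lemma:SOPreservation}, whence $g_{i+1}=\SO(Kg_i)\in L^2_v(\R^3;H^{1-\epsilon}_x(\O))$, establishing (i) for $g_{i+1}$. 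Finally, (ii) is inherited through $\SO$: for any $h$ with $|h(x,v)|\leq C' e^{-a|v|^2}$ one has $|\SO h(x,v)|\leq \bigl(\int_0^\infty e^{-\nu(v)s}\,ds\bigr)\, C' e^{-a|v|^2}\leq \nu_0^{-1}C' e^{-a|v|^2}$ by \eqref{eq:nu2}, so (ii) holds for $g_{i+1}$ with $C_{i+1}=\nu_0^{-1}C\,C_i$. This closes the induction.

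The statement is essentially a bookkeeping consequence of the work already done in Lemma~\ref{lemma:jg} and Lemma~\ref{lemma:SOPreservation} (the latter being where the geometry of $\partial\O$ actually enters), so there is no serious obstacle here. The one point that needs attention is keeping the Gaussian decay exponent $a$ fixed and strictly below $\tfrac14$ along the entire iteration, so that Lemma~\ref{lemma:MaxwellianBound} can be applied at every step; the constants $C_i$ are allowed to grow with $i$ and $\epsilon>0$ is held fixed throughout, so this causes no difficulty.
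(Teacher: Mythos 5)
Your proof is correct and takes essentially the same route as the paper: an induction carrying the two facts that $g_i$ lies in $L^2_v(\R^3;H^{1-\epsilon}_x(\O))$ and that $g_i$ has a Gaussian pointwise bound, with Lemma~\ref{lemma:jg}, Lemma~\ref{lemma:MaxwellianBound}, and Lemma~\ref{lemma:SOPreservation} supplying the base case and inductive steps, and \eqref{eq:SOGaussian} propagating the Gaussian bound. The only minor difference is that the paper treats $i=0$ and $i=1$ as separate base cases before running the induction from $i\geq 2$, whereas you carry both parts of the hypothesis from $i=0$ on, which is a slightly cleaner bookkeeping of the same argument.
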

\begin{proof}
    We prove $g_i \in  L^2_v(\R^3;H^{1-\epsilon}_x(\O))$ by induction on $i\geq 0$. \\
    \underline{\textit{Step 1} }.\\
    For $i=0$, since $g_0=Jg$, $g_0\in  L^2_v(\R^3;H^{1-\epsilon}_x(\O)) $ follows by Lemma~\ref{lemma:jg}. For $i=1$, we notice that 
    \begin{equation}
        |Jg(x,v)| \leq |g(q_-(x,v),v)| \leq Ce^{-a|v|^2}.    
    \end{equation}
    In view of Lemma~\ref{lemma:MaxwellianBound}, we have  
    \begin{equation}
        |KJg(x,v)| \leq  Ce^{-a|v|^2},
    \end{equation}
    where we may assume $a<\frac 1 4$. With this in mind,  
    Lemma~\ref{lemma:SOPreservation} guarantees that $g_1 \in L^2_v(\R^3;H^{1-\epsilon}_x(\O))$. 
    Moreover, by noticing that 
    \begin{equation} \label{eq:SOGaussian}
        \SO (e^{-a|v|^2})(x,v)=\int^{\tau_-(x,v)}_0  e^{-\nu(v)s}e^{-a|v|^2} \,ds \leq \frac 1 {\nu_0}  e^{-a|v|^2},
    \end{equation}
    we see that 
    \begin{equation}
        |g_1(x,v)|\leq Ce^{-a|v|^2}.
    \end{equation} 
    \underline{\textit{Step 2}}. \\
    Suppose for some $i\geq 2$, we have 
    \begin{equation}
    g_{i-1}=(\SO K)^{i-1} Jg \in  L^2_v(\R^3;H^{1-\epsilon}_x(\O))
    \end{equation} 
    and 
    \begin{equation}
        |g_{i-1}(x,v)|\leq Ce^{-a|v|^2}. 
    \end{equation}
    Then Lemma~\ref{lemma:MaxwellianBound} implies 
    \begin{equation}
    |Kg_{i-1}(x,v)| \leq  Ce^{-a|v|^2}.
    \end{equation} 
    Applying Lemma~\ref{lemma:SOPreservation} again yields 
    \begin{equation}
        g_i \in  L^2_v(\R^3;H^{1-\epsilon}_x(\O)).
    \end{equation} 
    Finally, we also notice that \eqref{eq:SOGaussian} implies
    \begin{equation}
        |g_{i}(x,v)|\leq Ce^{-a|v|^2}.
    \end{equation} \\
    We conclude that $g_i \in  L^2_v(\R^3;H^{1-\epsilon}_x(\O))$ for each $i\geq 0$ by induction.
\end{proof}
\section{Regularity via velocity averaging} \label{sec:averaging}
This section is devoted to the regularity due to velocity averaging. In contrast to Section~\ref{sec:VelAvgLBE}, we address the 
velocity averaging effect in bounded convex domains instead of the whole space. As mentioned in the introduction, the method of Fourier 
transform does not apply to bounded domains. To remedy this crux, we adopt Slobodeckij semi-norm as an alternative concept of Sobolev function 
class, see Definition~\ref{FracSobolev}. Hence, the difficulty shifts to estimates of singular integrals. To carry out this 
strategy, we introduce changes of coordinates, see Lemma~\ref{lemma:ChangeOfVariable1} and Lemma~\ref{lemma:ChangeOfVariable2}, to 
obtain the boundedness of the aforementioned singular integrals.

We begin with the proof of Corollary~\ref{cor:ksok}. Recall from \eqref{eq:ZeroExtension} that the notation $\widetilde{h}$ denotes the zero extension of $h$ from $\O\times\R^3$ to 
$\R^3\times\R^3$ and $Z:\O\times\R^3\to \R^3\times\R^3$ is the zero extension operator from $\O\times\R^3$ to $\R^3\times\R^3$. 
\begin{proof}[Proof of Corollary~\ref{cor:ksok}]
    For given $f\in L^2(\O\times\R^3)$, according to Lemma~\ref{lemma:ksk}, we have 
    $KSK \widetilde{f}\in L^2_v(\R^3;\tilde{H}^{\sfrac 1 2}_x(\R^3))$. Noticing \eqref{eq:SobolevEquivalence} and 
    \begin{equation}
        \biggl. \bigl(KSK \widetilde{f}\bigr)\biggr|_{\O\times\R^3} = K\SO K f,
    \end{equation}
    we see that 
    \begin{equation}\begin{split}
        \|  K\SO K f \|_{L^2_v(\R^3;H^{\sfrac 1 2}_x(\O))}&\leq  \left\|   \bigl(KSK \widetilde{f}\bigr) \right\|_{L^2_v(\R^3;H^{\sfrac 1 2}_x(\R^3))} \\
        &\leq C\, \left\|   \bigl(KSK \widetilde{f}\bigr) \right\|_{L^2_v(\R^3;\tilde{H}^{\sfrac 1 2}_x(\R^3))} \\
        &\leq C\,\|  \widetilde{f}\|_{L^2(\R^3\times\R^3)} \\
        &=C\,\| f \|_{L^2(\O\times\R^3)}.
    \end{split}\end{equation}
\end{proof}
\begin{lem} \label{lemma:ChangeOfVariable1}
We have the following formula for change of variables for any non-negative measurable function $h=h(v,y,r)$.
\begin{equation}\begin{split}\label{eq:ChangeOfVariable1}
    &\int_{\R^3}\int_{\O}\int^{|y-q_-(y,v)|}_0 h(v,y,r) \, drdydv  \\
    &\qquad = \int_{\R^3}\int_{\O} \int^{|y'-q_+(y',v')|}_0 h(v',y'+r'\hat{v'},r') \, dr'dy'dv'. 
\end{split}\end{equation}
\end{lem}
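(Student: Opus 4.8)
The plan is to establish \eqref{eq:ChangeOfVariable1} by performing, for each fixed velocity $v$, a volume-preserving shear in the spatial variable and then integrating in $v$ by Tonelli's theorem (legitimate since $h\ge 0$). As $\{v=0\}$ is Lebesgue-null in $\R^3$, I would assume $v\neq 0$ and write $\hat v=v/|v|$. Recalling $q_\pm(y,v)=y\mp\tau_\pm(y,v)v$, one has $|y-q_-(y,v)|=\tau_-(y,v)|v|$ and $|y-q_+(y,v)|=\tau_+(y,v)|v|$. For fixed $v$, introduce
\[
    \Phi_v:(y,r)\longmapsto(y',r')=(y-r\hat v,\,r).
\]
The Jacobian matrix of $\Phi_v$ is unipotent, so $\det\mathbf J_{\Phi_v}\equiv 1$ and no weight appears; the whole content of the lemma lies in tracking how $\Phi_v$ transforms the region of integration.

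Concretely, I would set
\[
    A_v:=\bigl\{(y,r):\,y\in\O,\ 0\le r\le|y-q_-(y,v)|\bigr\},\qquad
    B_v:=\bigl\{(y',r'):\,y'\in\O,\ 0\le r'\le|y'-q_+(y',v)|\bigr\},
\]
and prove that $\Phi_v$ is a bijection from $A_v$ onto $B_v$ up to a fibrewise null set. Injectivity is clear. For $(y,r)\in A_v$, the bound $r/|v|\le\tau_-(y,v)$ places $y'=y-r\hat v$ on the chord $\overline{q_-(y,v)\,y}$, so $y'\in\bar\O$ (and $y'\in\O$ when $r<|y-q_-(y,v)|$); moreover the forward half-line $s\mapsto y'+sv$ passes through $y$ and then leaves $\O$ at $q_+(y,v)$, whence $q_+(y',v)=q_+(y,v)$ and $\tau_+(y',v)=r/|v|+\tau_+(y,v)$, so that $|y'-q_+(y',v)|=r+|y-q_+(y,v)|\ge r=r'$ and $(y',r')\in B_v$. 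The reverse inclusion follows by the symmetric argument with $q_-$ and $q_+$ interchanged: for $(y',r')\in B_v$ the point $y=y'+r'\hat v$ lies on $\overline{y'\,q_+(y',v)}$, one has $q_-(y,v)=q_-(y',v)$, and $|y-q_-(y,v)|=r'+|y'-q_-(y',v)|\ge r'$, so $(y,r')\in A_v$ with $\Phi_v(y,r')=(y',r')$. The sets discarded are those where $y$ or $y'$ meets $\partial\O$, i.e. where $r$ equals an endpoint of its interval; these are null in each fibre and so do not affect the integrals.

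Granting this, the conclusion is immediate: for fixed $v$ the inner double integral on the left of \eqref{eq:ChangeOfVariable1} equals $\iint_{A_v}h(v,y,r)\,dy\,dr$, which $\Phi_v$ (Jacobian $1$) turns into $\iint_{B_v}h(v,y'+r'\hat v,r')\,dy'\,dr'$, namely the inner double integral on the right; integrating in $v$ over $\R^3$ and applying Tonelli yields \eqref{eq:ChangeOfVariable1}. The step I expect to be the main obstacle is the geometric bijection $A_v\leftrightarrow B_v$ — in essence the elementary but slightly fiddly fact that sliding the base point of a chord of $\O$ along that chord leaves both endpoints $q_\pm$ fixed and merely redistributes the chord length between the two sides — together with the care needed to excise the boundary cases cleanly; by contrast the Jacobian computation and the final Tonelli step are routine.
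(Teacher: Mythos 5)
Your proposal is correct and takes essentially the same route as the paper's proof: both introduce the shear $(y,r)\mapsto(y-r\hat v,r)$, observe that its Jacobian is identically $1$, and verify well-definedness of the map in both directions by checking that sliding $y$ backward along the chord leaves $q_+$ fixed while lengthening the forward segment (and symmetrically for the inverse). Your fibrewise formulation (fix $v$, then Tonelli) and the explicit excision of the null sets $\{v=0\}$ and $\{r=\text{endpoint}\}$ are minor presentational refinements over the paper's version, which treats $X$ and $Y$ globally on the full $(v,y,r)$-domains, but the geometric content is identical.
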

\begin{proof}
    Let $A$ and $B$ denote the domain of integration on the left hand side and right hand side of \eqref{eq:ChangeOfVariable1}, respectively. 
    We consider the change of variables 
    \begin{equation} 
    \left\{ 
        \begin{array}{l}
            v'=v, \\
            y'=y-r\hat{v}, \\
            r'=r.  
        \end{array} 
        \right.
    \end{equation}
    Let $X:A\to B$ be defined by 
    $X(v,y,r)=(v,y-r\hat{v},r)$ and $Y:B\to A$ be defined by $Y(v',y',r')=(v',y'+r'\hat{v'},r')$. \\
    \underline{\textit{Step 1} }. $X:A\to B$ is well-defined. \\
    To verify well-definedness of $X$, for given $(v,y,r)\in A$, $y-r\hat{v} \in \O$ clearly. We also notice that 
    \begin{equation}
        0<r< r+|y- q_+(y,v)|=|(y-r\hat{v})-q_+(y-r\hat{v},v)|.
    \end{equation}
    Therefore, $X(v,y,r)\in B$ and $X$ is well-defined. \\
    \underline{\textit{Step 2} }. $Y:B\to A$ is well-defined. \\
    To verify well-definedness of $Y$, for given $(v',y',r')\in B$, $y'+r'\hat{v'} \in \O$ clearly. We note that 
    \begin{equation}
        0<r' < r'+|y'-q_-(y',v')|=|(y'+r'\hat{v'}) -q_-(y'+r'\hat{v'},v')|.
    \end{equation}
    Therefore, $Y(v',y',r')\in A$ and $Y$ is well-defined.  \\
    Since $X\circ Y =Y\circ X =\text{id}$ and the Jacobian for the change of variables is clearly constant $1$, we have 
    \eqref{eq:ChangeOfVariable1}.
\end{proof}
\begin{lem}  \label{lemma:ChangeOfVariable2}
We have the following formula for change of variables for any non-negative measurable function $h=h(v,y,x,r)$.
\begin{equation}\begin{split}  \label{eq:ChangeOfVariable2}
    &\int_{D_1}\int^{|y-q_-(y,v)|}_{|x-q_-(x,v)|} h(v,y,x,r) \, drdxdydv \\
    &= \int_{\R^3}\int_{\O}\int_{\O_{v',y'}}\int^{\min\{|x'-q^2(x',v')|,|y'-q_+(y',v')|\}}_{|x'-q^1(x',v')|}  \\
    &\qquad \qquad \qquad  h(v',y'+r'\hat{v'},x'+r'\hat{v'},r') \, dr'dx'dy'dv', 
\end{split}\end{equation}
    where $D_1$ is as defined by \eqref{eq:domainD1} and 
    $\O_{v',y'}$ is defined by (see Figure~\ref{fig:omegavy})
    \begin{equation} \label{eq:OmegavyDef}\begin{split}
        \O_{v',y'}=&\large\{ x'\notin \O : \, \text{there exist }t^i(x',v'),\, i=1,2, \\
        &\text{such that }0<t^1(x',v')<t^2(x',v'), \\
        & q^i(x',v'):=x'+t^i(x',v')\hat{v'}\in \partial\O, \\
        &t^1(x',v') < |y'-q_+(y',v')| \large\}.
    \end{split}\end{equation}
\end{lem}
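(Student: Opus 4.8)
The plan is to prove \eqref{eq:ChangeOfVariable2} as a genuine change of variables on the seven–dimensional region underlying the left–hand integral, in the same spirit as Lemma~\ref{lemma:ChangeOfVariable1}. Let $A$ and $B$ denote the domains of integration on the left and right of \eqref{eq:ChangeOfVariable2}, so that
\[
A=\{(v,y,x,r):\ v\in\R^3\setminus\{0\},\ x,y\in\O,\ |x-q_-(x,v)|\le r\le|y-q_-(y,v)|\},
\]
and $B$ is the analogous set with $y'\in\O$, $x'\in\O_{v',y'}$ and $|x'-q^1(x',v')|\le r'\le\min\{|x'-q^2(x',v')|,|y'-q_+(y',v')|\}$. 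Introduce
\[
\Phi(v,y,x,r):=\bigl(v,\ y-r\hat v,\ x-r\hat v,\ r\bigr),\qquad
\Psi(v',y',x',r'):=\bigl(v',\ y'+r'\hat{v'},\ x'+r'\hat{v'},\ r'\bigr),
\]
so that $\Phi\circ\Psi=\Psi\circ\Phi=\mathrm{id}$ as maps of $\R^3\times\R^3\times\R^3\times\R$. Listing the $r$–coordinate first, the differential of $\Phi$ is block lower triangular with identity diagonal blocks, hence $|\det D\Phi|\equiv1$. Since under $\Psi$ the integrand transforms as $h(v,y,x,r)=h\bigl(v',\,y'+r'\hat{v'},\,x'+r'\hat{v'},\,r'\bigr)$, the identity \eqref{eq:ChangeOfVariable2} follows once we show $\Phi$ restricts to a bijection $A\to B$ up to a set of measure zero (boundary configurations and $v=0$ being negligible).

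To see $\Phi(A)\subseteq B$, fix $(v,y,x,r)\in A$ and set $(v',y',x',r')=\Phi(v,y,x,r)$. Since $r\le|y-q_-(y,v)|$, the point $y'=y-r\hat v$ lies on the chord from $q_-(y,v)$ to $y$, so $y'\in\O$ (strictly interior for a.e.\ such data); since $r\ge|x-q_-(x,v)|$, the point $x'=x-r\hat v$ sits at or beyond $q_-(x,v)$ along $-\hat v$, so $x'\notin\O$. The ray $\{x'+t\hat{v'}:t\ge0\}$ contains $x\in\O$ at $t=r$, so by convexity of $\O$ it meets $\partial\O$ in exactly two points, and comparing positions along the ray gives
\[
q^1(x',v')=q_-(x,v),\quad q^2(x',v')=q_+(x,v),\quad q_+(y',v')=q_+(y,v),\quad q_-(y',v')=q_-(y,v),
\]
together with the additive length relations $t^1(x',v')=r-|x-q_-(x,v)|$, $t^2(x',v')=r+|x-q_+(x,v)|$, $|y'-q_+(y',v')|=r+|y-q_+(y,v)|$ and $|y'-q_-(y',v')|=|y-q_-(y,v)|-r$. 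From these one reads off directly $t^1(x',v')\le r'\le t^2(x',v')$, $r'\le|y'-q_+(y',v')|$ and $t^1(x',v')<|y'-q_+(y',v')|$; that is, $x'\in\O_{v',y'}$ and $r'$ lies in the prescribed interval, so $(v',y',x',r')\in B$.

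For the reverse inclusion one runs the same computation in reverse. Given $(v',y',x',r')\in B$, the membership $x'\in\O_{v',y'}$ together with $|x'-q^1(x',v')|\le r'\le|x'-q^2(x',v')|$ forces $x=x'+r'\hat{v'}$ to lie (a.e.\ strictly) between the two boundary intersections of its ray, hence $x\in\O$; the bound $r'\le|y'-q_+(y',v')|$ and $y'\in\O$ force $y=y'+r'\hat{v'}\in\O$; and the same length identities give $|x-q_-(x,v)|=r'-t^1(x',v')\le r'$ and $|y-q_-(y,v)|=r'+|y'-q_-(y',v')|\ge r'$, so $(v,y,x,r)\in A$. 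Hence $\Psi(B)\subseteq A$, and since $\Phi$ and $\Psi$ are mutually inverse this makes $\Phi\colon A\to B$ a bijection up to null sets. Applying the change–of–variables formula with unit Jacobian gives \eqref{eq:ChangeOfVariable2}. The only genuinely delicate point is the convexity argument identifying $q^1(x',v'),q^2(x',v')$ with $q_\mp(x,v)$ and establishing the four additive length identities; once those are in hand the two domains match and the rest is bookkeeping.
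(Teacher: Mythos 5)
Your proof is correct and follows essentially the same route as the paper's: you use the same translation maps ($\Phi,\Psi$ are the paper's $\check X,\check Y$), verify the unit Jacobian, and check well-definedness on both sides by comparing distances along the chord through $x$ and $y$ in direction $\hat v$. The only cosmetic difference is that you make the four additive length identities explicit, whereas the paper states the corresponding strict inequalities by ``comparing the distances''; the substance is identical.
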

\begin{figure} [!htb]
    \centering
    \includegraphics{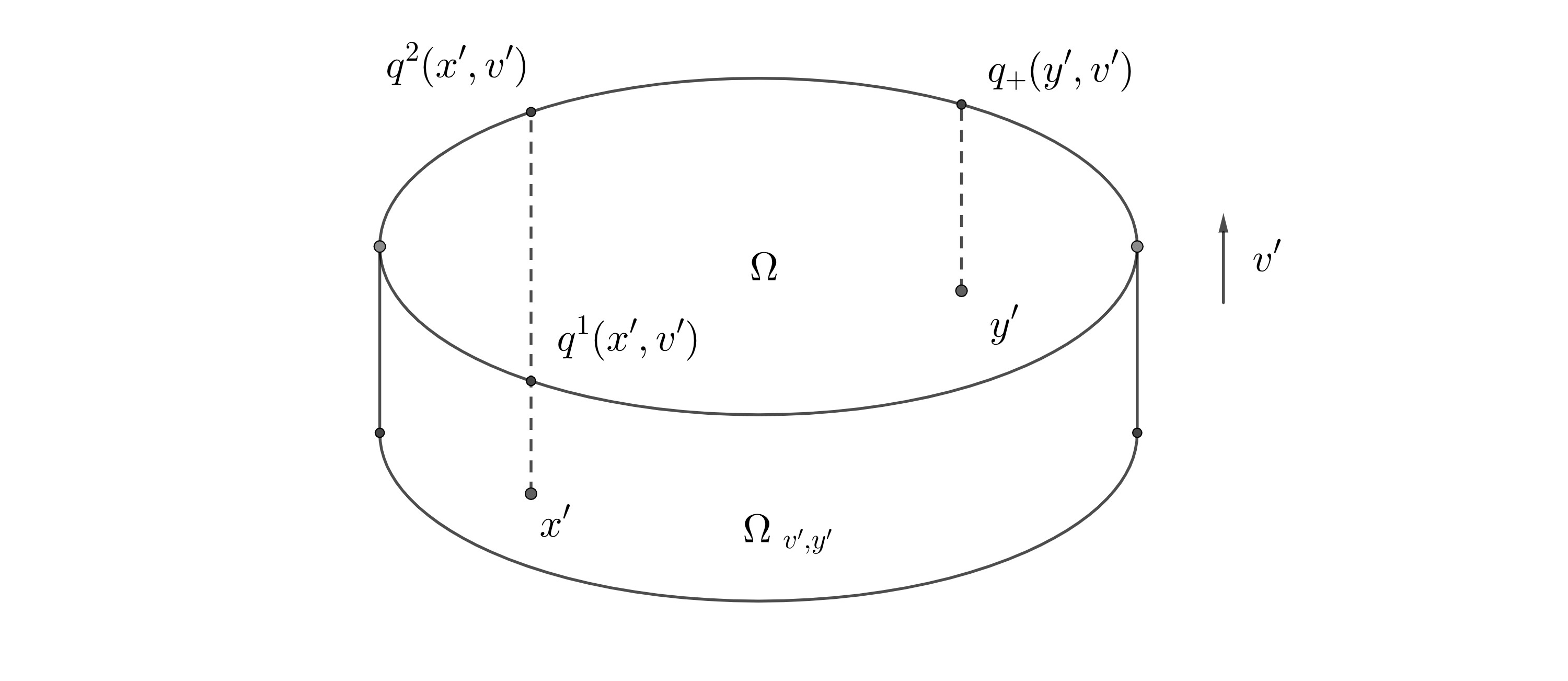}
    \caption{}
    \label{fig:omegavy}
\end{figure}
\begin{rmk}
    An alternative way to characterize $\O_{v',y'}$ is as follows. We first define
    \begin{equation}
        \O'_{v',y'}=\large\{ z-t \hat{v'}: \, z\in \O, \, 0<t<|y' - q_+(y',v')| \large\}.
    \end{equation}
    Then, $\O_{v',y'}$ can be expressed as 
    \begin{equation}
        \O_{v',y'} = \O'_{v',y'} \setminus \bar{\O}.
    \end{equation}
\end{rmk}
\begin{proof}
    We denote the domain of integration on the left hand side and right hand side by $\check{A}$ and $\check{B}$, 
    respectively. This time we consider 
    \begin{equation} 
        \left\{ 
            \begin{array}{l}
                v'=v, \\
                y'=y-r\hat{v}, \\
                x'=x-r\hat{v}, \\
                r'=r.  
            \end{array} 
            \right.
        \end{equation}
        Let $\check{X}:\check{A}\to \check{B}$ be defined by $\check{X}(v,y,x,r)=(v,y-r\hat{v},x-r\hat{v},r)$ and $\check{Y}:\check{B}\to \check{A}$ be defined by 
        $\check{Y}(v',y',x',r')=(v',y'+r'\hat{v'},x'+r'\hat{v'},r')$. \\ 
        \underline{\textit{Step 1} }. $\check{X}:\check{A}\to \check{B}$ is well-defined. \\
        For given $(v,y,x,r)\in \check{A}$, clearly we have 
        $y-r\hat{v}\in \O$ and $x-r\hat{v}\notin \O$ and there are two positive numbers 
        $t^1(x-r\hat{v},v)<t^2(x-r\hat{v},v)$ such that 
        \[
            q^i(x-r\hat{v},v)=x-r\hat{v}+t^i(x-r\hat{v},v)\hat{v}\in \partial\O.
            \]
        Moreover, comparing the distances results in 
        \begin{equation}\begin{split}
            t^1(x-r\hat{v},v)&=|(x-r\hat{v})-q^1(x-r\hat{v},v)| \\
            &< |(x-r\hat{v})-x| \\
            &= r\\
            &= |(y-r\hat{v})-y| \\
            &< |(y-r\hat{v})-q_+(y-r\hat{v},v)|. 
        \end{split}\end{equation}
        On the other hand, we notice
        \begin{equation}\begin{split}
            r&= |(x-r\hat{v})-x| \\
            &< |(x-r\hat{v})-q^2(x-r\hat{v},v)|.
        \end{split}\end{equation}
        We conlcude that $\check{X}(v,y,x,r)\in \check{B}$ and therefore $\check{X}$ is well-defined. \\
        \underline{\textit{Step 2} }. $\check{Y}:\check{B}\to \check{A}$ is well-defined. \\
        For given $(v',y',x',r')\in \check{B}$, since $r' \leq  |y' -q_+(y',v')|$, we have 
        $y'+r'\hat{v'}\in\O$. Likewise, since 
        \[
            |x'-q^1(x',v')|<r<|x'-q^2(x',v')|,    
        \]
        we have $x'+r'\hat{v'}\in\O$. 
        Comparing the distances yields 
        \begin{equation}\begin{split}
            |(x'+r'\hat{v'})-q_-(x'+r'\hat{v'},v')| &< |(x'+r'\hat{v'})-x'| \\
            &=r' \\
            &=|(y'+r'\hat{v'})-y'|  \\
            &<|(y'+r'\hat{v'}) - q_-(y'+r'\hat{v'},v')|.
        \end{split}\end{equation} 
        Therefore, it follows that $\check{Y}(v',y',x',r')\in \check{A}$ and therefore $\check{Y}$ is 
        well-defined. \\
        Since $\check{X}\circ \check{Y} =\check{Y}\circ \check{X} =\text{id}$ and the Jacobian for the change of variables is clearly constant $1$, we conclude 
        \eqref{eq:ChangeOfVariable2}. This completes the proof.
\end{proof}
\begin{prop} \label{prop:SKsquare}
    Let $h$ be a function belonging to $L^2(\O\times\R^3)$. Then, there exists a constant $C$ such that 
    \begin{align} 
        |\SO Kh(y,v)|^2 \leq &C\, \frac{1}{|v|}\int_{\R^3}\int^{|y-q_-(y,v)|}_0 |k(v,v_*)||h(y-r\hat{v},v_*)|^2 \,drdv_*, \label{eq:SKsquare} \\
        |K \SO h(y,v)|^2 \leq &C\, \int_{\R^3}\int^{|y-q_-(y,v_*)|}_0\frac{1}{|v_*|} |k(v,v_*)||h(y-r\hat{v_*},v_*)|^2 \,drdv_*. \label{eq:SKsquare2}
    \end{align}
\end{prop}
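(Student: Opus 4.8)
The plan is to prove both estimates by the same two-step mechanism: first rewrite the $\SO$-integral in arc-length parametrization, and then apply the Cauchy--Schwarz inequality twice --- once against the collision kernel $|k(v,v_*)|$ and once against the exponential damping factor --- invoking the $L^1_v$ bound $\int_{\R^3}|k(v,v_*)|\,dv_*\le C$ from Corollary~\ref{cor:k} and the lower bound $\nu(v)\ge\nu_0$ from \eqref{eq:nu2}. The whole point is careful bookkeeping of the weights so that the factors controlled by Corollary~\ref{cor:k} and \eqref{eq:nu2} are peeled off, leaving the squared integrand paired with $|k(v,v_*)|$ and with $e^{-\nu(v)r/|v|}\le 1$.

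For \eqref{eq:SKsquare} I would start from
\[
    \SO Kh(y,v)=\int_0^{\tau_-(y,v)}e^{-\nu(v)s}\int_{\R^3}k(v,v_*)h(y-sv,v_*)\,dv_*\,ds
\]
and substitute $r=s|v|$; since $q_-(y,v)=y-\tau_-(y,v)v$, the upper limit becomes $|y-q_-(y,v)|$, the argument $y-sv$ becomes $y-r\hat v$, and the substitution contributes a Jacobian $|v|^{-1}$. On the inner $v_*$-integral I would write $|k(v,v_*)|=|k(v,v_*)|^{1/2}|k(v,v_*)|^{1/2}$ and apply Cauchy--Schwarz, absorbing one factor via $\int_{\R^3}|k(v,v_*)|\,dv_*\le C$. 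On the remaining $r$-integral I would apply Cauchy--Schwarz against $e^{-\nu(v)r/|v|}$, using $\int_0^\infty e^{-\nu(v)r/|v|}\,dr=|v|/\nu(v)$ and $e^{-\nu(v)r/|v|}\le1$. The powers of $|v|$ then combine as $|v|^{-2}\cdot(|v|/\nu(v))=1/(|v|\nu(v))\le C/|v|$, which is precisely the stated right-hand side.

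The estimate \eqref{eq:SKsquare2} follows the identical pattern with the two integrations in the opposite order. Starting from $K\SO h(y,v)=\int_{\R^3}k(v,v_*)(\SO h)(y,v_*)\,dv_*$, I would first rewrite $(\SO h)(y,v_*)$ in arc-length form as $|v_*|^{-1}\int_0^{|y-q_-(y,v_*)|}e^{-\nu(v_*)r/|v_*|}h(y-r\hat{v_*},v_*)\,dr$, then apply Cauchy--Schwarz in $v_*$ against $|k(v,v_*)|$ and Cauchy--Schwarz in $r$ against $e^{-\nu(v_*)r/|v_*|}$ exactly as before; here the factor $|v_*|^{-1}$ simply rides along inside the $v_*$-integral because $K$ acts only on the velocity variable. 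There is no genuine obstacle in this proposition; the only care needed is to set up each Cauchy--Schwarz so that the "weight" factors are exactly the ones estimated by Corollary~\ref{cor:k} and \eqref{eq:nu2}, which is what yields the clean integrands in the two displayed inequalities.
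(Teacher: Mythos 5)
Your proposal is correct and follows essentially the same two-step Cauchy--Schwarz argument as the paper: one application against the exponential factor $e^{-\nu s}$ (equivalently $e^{-\nu(v)r/|v|}$ after the arc-length substitution), using $\nu\ge\nu_0$, and one against $|k(v,v_*)|$, using $\int|k(v,v_*)|\,dv_*\le C$ from Corollary~\ref{cor:k}. The only cosmetic differences are the order in which you perform the two Cauchy--Schwarz steps and the fact that for \eqref{eq:SKsquare2} the paper combines them into a single Cauchy--Schwarz over the joint $(s,v_*)$-measure, neither of which changes the substance.
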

\begin{proof}
    By Cauchy-Schwarz inequality, we see that
    \begin{equation}\begin{split} \label{eq:SKsquareCS}
        |\SO K h(y,v)|^2 =&\left( \int^{\tau_-(y,v)}_0 e^{-\nu(v)s}Kh(y-sv,v)\,ds \right)^2 \\
        \leq& \left(\int^{\tau_-(y,v)}_0 e^{-2\nu(v)s}\,ds\right) \left( \int^{\tau_-(y,v)}_0 |Kh(y-sv,v)|^2\, ds\right)  \\
        \leq& C\, \int^{\tau_-(y,v)}_0 |Kh(y-sv,v)|^2\, ds  \\
        =&C\, \int^{|y-q_-(y,v)|}_0 \frac{1}{|v|} |Kh(y-r\hat{v},v)|^2\, dr, 
    \end{split}\end{equation}
    where $r=s|v|$. Furthermore, Corollary~\ref{cor:k} implies that 
    \begin{equation}\begin{split} \label{eq:SKsquareK}
        |Kh(y-r\hat{v},v)|^2\leq& \left(\int\limits_{\R^3} |k(v,v_*)|\,dv_*\right)\left(\int\limits_{\R^3} |k(v,v_*)||h(y-r\hat{v},v_*)|^2\,dv_*\right) \\
        \leq& C\, \int\limits_{\R^3} |k(v,v_*)||h(y-r\hat{v},v_*)|^2\,dv_*. 
    \end{split}\end{equation}
        Combining \eqref{eq:SKsquareCS} and \eqref{eq:SKsquareK} yields \eqref{eq:SKsquare}. 
        
        To prove \eqref{eq:SKsquare2}, we apply Cauchy-Schwarz inequality again to deduce that
        \begin{equation}\begin{split}
            |K\SO h(y,v)|^2 =& \left( \int_{\R^3}\int^{\tau_-(y,v_*)}_0 e^{-\nu(v_*)s} k(v,v_*) h(y-sv_*,v_*)\,dsdv_* \right)^2  \\
            \leq& \left(\int_{\R^3} |k(v,v_*)|\,dv_*\right) \left(\int_{\R^3}\int^{\tau_-(y,v_*)}_0 |k(v,v_*)||h(y-sv_*,v_*)|^2\,dsdv_*\right) \\
            \leq& C\, \int_{\R^3}\int^{\tau_-(y,v_*)}_0 |k(v,v_*)||h(y-sv_*,v_*)|^2\,dsdv_* \\
            =&C\, \int_{\R^3}\int^{|y-q_-(y,v_*)|}_0 \frac{1}{|v_*|} |k(v,v_*)||h(y-r\hat{v_*},v_*)|^2\,drdv_*,
        \end{split}\end{equation}
        where $r=s|v_*|$.
\end{proof}
We are now in a position to prove Lemma~\ref{lemma:soksok}.
\begin{proof}[Proof of Lemma~\ref{lemma:soksok}]
    For given $f\in L^2(\O\times\R^3)$, we recall $f_i$ denotes the function $(\SO K)^i f$. To prove the lemma, it suffices to show 
    \begin{equation}
        \int_{D_1}\frac{|f_2(x,v)-f_2(y,v)|^2}{|x-y|^4}\, dxdydv \leq C\, \| f\|^2_{L^2(\O\times\R^3)},
    \end{equation}
    where $D_1$ is as defined by \eqref{eq:domainD1}. According to Corollary~\ref{cor:ksok}, we have 
    \begin{equation} \label{eq:kf1}
    \| Kf_1 \|_{L^2_v(\R^3;H^{\sfrac 1 2}_x(\O))} \leq C\, \|f \|_{L^2(\O\times\R^3)}.
    \end{equation}
    In domain $D_1$, we notice that $\tau_-(x,v)\leq \tau_-(y,v)$. Therefore, 
    \begin{equation}\begin{split} \label{eq:somega2}
        &|S_\O Kf_1 (x,v)-S_\O Kf_1(y,v)|^2  \\
        &\qquad\leq 2 \left| \int^{\tau_-(x,v)}_0 e^{-\nu(v)s}(Kf_1(x-sv,v)-Kf_1(y-sv,v))\,ds \right|^2 \\
        &\qquad \quad+2 \left| \int^{\tau_-(y,v)}_{\tau_-(x,v)}e^{-\nu(v)s}Kf_1(y-sv,v)\,ds \right|^2. 
    \end{split}\end{equation}
    Hence, we have 
    \begin{equation}
        \int\limits_{D_1} \frac{|f_2(x,v)-f_2(y,v)|^2}{|x-y|^{4}}\, dxdydv \leq I_1 +I_2,
    \end{equation}
    where
    \begin{align}
        I_1 &:= \int_{D_1} \frac{2 \left| \int^{\tau_-(x,v)}_0 e^{-\nu(v)s}(Kf_1(x-sv,v)-Kf_1(y-sv,v))\,ds \right|^2}{|x-y|^{4}}\, dxdydv, \\
        I_2 &:= \int_{D_1} \frac{2 \left| \int^{\tau_-(y,v)}_{\tau_-(x,v)}e^{-\nu(v)s}Kf_1(y-sv,v)\,ds \right|^2}{|x-y|^{4}}\, dxdydv.
    \end{align}
    To estimate $I_1$, taking $\epsilon = \frac{1}{2}$ and $h=Kf_1$ in the steps of \eqref{eq:SOextension1} and 
    \eqref{eq:SOextension}, in the same fashion, we conclude
    \begin{equation} \label{eq:soksokI1}
        \begin{split}
        &\int\limits_{D_1}\frac{\left| \int^{\tau_-(x,v)}_0 e^{-\nu(v)s}(Kf_1(x-sv,v)-Kf_1(y-sv,v))\,ds \right|^2}{|x-y|^{4}}\,dxdydv  \\
        &\qquad\leq C\, \| Kf_1\|^2_{L^2_v(\R^3;H^{\sfrac{1}{2}}_x(\O))}  \\
        &\qquad \leq  C\, \| f\|^2_{L^2(\O\times\R^3)},
    \end{split}\end{equation}
    where we have used \eqref{eq:kf1}. 
    To deal with $I_2$, by Cauchy-Schwarz inequality again, we have
    \begin{equation} \label{eq:soksokEstimate1}
        \begin{split} 
        &\left| \int^{\tau_-(y,v)}_{\tau_-(x,v)}e^{-\nu(v)s}Kf_1(y-sv,v)\,ds \right|^2 \\
        &\qquad\leq\left(\int^{\tau_-(y,v)}_{\tau_-(x,v)}e^{-\nu(v)s} \,ds\right) \left(\int^{\tau_-(y,v)}_{\tau_-(x,v)}e^{-\nu(v)s}|Kf_1(y-sv,v)|^2\,ds \right).
    \end{split}\end{equation}
    For small $\epsilon>0$, 
    \begin{equation} \label{eq:soksokEstimate2}
        \begin{split}
        \int^{\tau_-(y,v)}_{\tau_-(x,v)}e^{-\nu(v)s} \,ds \leq&\left( \int^{\infty}_{0}e^{-\nu(v)s} \,ds\right)^\epsilon \left( \int^{\tau_-(y,v)}_{\tau_-(x,v)}e^{-\nu(v)s} \,ds\right)^{1-\epsilon} \\
        &\leq \frac{1}{\nu_0^\epsilon}|\tau_-(x,v)-\tau_-(y,v)|^{1-\epsilon} \\
        &\leq C\, \frac{|x-y|^{1-\epsilon}}{N_-(x,v)^{1-\epsilon}|v|^{1-\epsilon}},
    \end{split}\end{equation}
    where the last inequality follows from Proposition~\ref{prop:ProjDistance2}. On the other hand, 
    the change of variable $r=s|v|$ results in 
    \begin{equation} \label{eq:soksokEstimate3}
        \begin{split}
        \int^{\tau_-(y,v)}_{\tau_-(x,v)}e^{-\nu(v)s}|Kf_1(y-sv,v)|^2\,ds =& \frac{1}{|v|}\int^{|y-q_-(y,v)|}_{|x-q_-(x,v)|}e^{-\frac{\nu(v)}{|v|}r}|Kf_1(y-r\hat{v},v)|^2\,dr\\
        \leq&  \frac{1}{|v|}\int^{|y-q_-(y,v)|}_{|x-q_-(x,v)|}|Kf_1(y-r\hat{v},v)|^2\,dr.
        \end{split}
    \end{equation}
    Combining \eqref{eq:soksokEstimate1}-\eqref{eq:soksokEstimate3}, we have
    \begin{equation} \label{eq:soksokI2step1}
        \begin{split}
        &\left| \int^{\tau_-(y,v)}_{\tau_-(x,v)}e^{-\nu(v)s}Kf_1(y-sv,v)\,ds \right|^2 \\
        &\qquad\leq C\, \frac{|x-y|^{1-\epsilon}}{N_-(x,v)^{1-\epsilon}|v|^{2-\epsilon}}\int^{|y-q_-(y,v)|}_{|x-q_-(x,v)|}|Kf_1(y-r\hat{v},v)|^2\,dr. 
    \end{split}\end{equation}
    Therefore, it follows that 
    \begin{equation} \label{eq:soksokI2step2}
        I_2  \leq C\, \int_{D_1}\int^{|y-q_-(y,v)|}_{|x-q_-(x,v)|} \frac{|Kf_1(y-r\hat{v},v)|^2}{N_-(x,v)^{1-\epsilon}|v|^{2-\epsilon}|x-y|^{3+\epsilon}}\, dr dxdydv.
    \end{equation}
    Letting $x'=x-r\hat{v}$ and $y'=y-r\hat{v}$, by Lemma~\ref{lemma:ChangeOfVariable2}, we obtain
    \begin{equation} \label{eq:soksokI2step3}
        \begin{split}
        &\int_{D_1} \int^{|y-q_-(y,v)|}_{|x-q_-(x,v)|} \frac{|Kf_1(y-r\hat{v},v)|^2}{N_-(x,v)^{1-\epsilon}|v|^{2-\epsilon}|x-y|^{3+\epsilon}}\, dr dxdydv \\
        &\quad=\int\limits_{\R^3}\int\limits_{\O}\int\limits_{\O_{v,y'}}\int^{\min\{|q^2(x',v)-x'|,|q_+(y',v)-y'|\}}_{|q^1(x',v)-x'|} \\
        &\qquad \qquad \frac{|Kf_1(y',v)|^2}{N_-(q^1(x',v),v)^{1-\epsilon}|v|^{2-\epsilon}|x'-y'|^{3+\epsilon}}\, dr dx'dy'dv \\
        &\quad\leq \int\limits_{\R^3}\int\limits_{\O}\int\limits_{\O_{v,y'}}  \frac{|Kf_1(y',v)|^2 |q^2(x',v)-q^1(x',v)| }{N_-(q^1(x',v),v)^{1-\epsilon}|v|^{2-\epsilon}|x'-y'|^{3+\epsilon}}\,dx'dy'dv \\
        &\quad \leq C\, \int\limits_{\R^3}\int\limits_{\O}\int\limits_{\O_{v,y'}}  \frac{|Kf_1(y',v)|^2 }{|v|^{2-\epsilon}|x'-y'|^{3+\epsilon}}\,dx'dy'dv,
    \end{split}\end{equation}
    where we have utilized Proposition~\ref{prop:ChordBound} by 
    identifying $q^1(x',v)=q_-(x,v)$ and $q^2(x',v)=q_+(x,v)$. Noticing that from \eqref{eq:OmegavyDef}, 
    $\O_{v,y'}$ lies outside of $\O$, we have
    \[
        \O_{v,y'}\subset \left(\R^3\setminus\O\right) \subset \left(\R^3\setminus B_{d_{y'}}(y')\right).
        \]
    With this in mind, we deduce
    \begin{equation} \label{eq:soksokI2step4}
        \begin{split}
        I_2&\leq C\,\int\limits_{\R^3}\int\limits_{\O}\int\limits_{\O_{v,y'}}  \frac{|Kf_1(y',v)|^2 }{|v|^{2-\epsilon}|x'-y'|^{3+\epsilon}}\,dx'dy'dv \\
        &\leq \int\limits_{\R^3}\int\limits_{\O}\int_{\R^3\setminus B_{d_{y'}}(y')}  \frac{|Kf_1(y',v)|^2 }{|v|^{2-\epsilon}|x'-y'|^{3+\epsilon}}\,dx'dy'dv\\
        &\leq C\,\int\limits_{\R^3}\int\limits_{\O} \frac{|Kf_1(y',v)|^2 }{|v|^{2-\epsilon}d^{\epsilon}_{y'}}\,dy'dv. \\
    \end{split}\end{equation}
    Proposition~\ref{prop:SKsquare} and Lemma~\ref{lemma:ChangeOfVariable1} imply
    \begin{equation} \label{eq:soksokI2step5}
        \begin{split}
        &\int\limits_{\R^3}\int\limits_{\O} \frac{|Kf_1(y',v)|^2 }{|v|^{2-\epsilon}d^{\epsilon}_{y'}}\,dy'dv \\
        &\quad\leq C\, \int\limits_{\R^3}\int\limits_{\O}\int\limits_{\R^3} \int^{|y'-q_-(y',v_*)|}_0\frac{ |k(v,v_*)||Kf(y'-r\hat{v_*},v_*)|^2}{|v_*||v|^{2-\epsilon}d_{y'}^\epsilon}  \,drdv_*dy' dv \\
        &\quad= C\, \int\limits_{\R^3}\int\limits_{\O}\int\limits_{\R^3} \int^{|y''-q_+(y'',v_*)|}_0\frac{ |k(v,v_*)||Kf(y'',v_*)|^2}{|v_*||v|^{2-\epsilon}d_{y''+r\hat{v_*}}^\epsilon}  \,drdv_*dy'' dv \\
        &\quad\leq C\, \int\limits_{\R^3}\int\limits_{\O}\int\limits_{\R^3} \frac{ |k(v,v_*)||Kf(y'',v_*)|^2}{|v_*||v|^{2-\epsilon}}  \,dv_*dy'' dv,
    \end{split}\end{equation}
    where $y''=y'-r\hat{v_*}$ and we have used Lemma~\ref{lemma:FracIntegralOverChord} in the last line. 
    Utilizing Proposition~\ref{prop:1overvk}, we conclude 
    \begin{equation} \label{eq:soksokI2step6}
        \begin{split}
        I_2&\leq C\, \int\limits_{\O}\int\limits_{\R^3}\int\limits_{\R^3} \frac{ |k(v,v_*)||Kf(y'',v_*)|^2}{|v_*||v|^{2-\epsilon}}  \,dvdv_* dy'' \\
        &\leq C\,\int\limits_{\O}\int\limits_{\R^3}\frac{|Kf(y'',v_*)|^2}{|v_*|}\, dv_* dy'' \\
        &\leq C\, \int\limits_{\O} \int\limits_{\R^3} \frac{1}{|v_*|} \left(\int\limits_{\R^3}|k(v_*,w)|\,dw \right)\left(\int\limits_{\R^3}|k(v_*,w)||f(y'',w)|^2\,dw \right)\, dv_*dy'' \\
        &\leq C\, \int\limits_{\O} \int\limits_{\R^3}\int\limits_{\R^3} \frac{|k(v_*,w)||f(y'',w)|^2}{|v_*|}\,dv_* dw dy'' \\
        &\leq C\, \int\limits_{\O} \int\limits_{\R^3} |f(y'',w)|^2 dw dy''.
    \end{split}\end{equation}
    This completes the proof.
\end{proof}
Let us elaborate on Lemma~\ref{lemma:soksokextension} for a while. For 
$f\in L^2(\O\times \R^3)$, in view of Lemma~\ref{lemma:soksok}, the function 
$\SO K \SO Kf$ belongs to $L^2_v(\R^3;H^{\sfrac{1}{2}}_x(\O))$. Now consider its zero extension $\widetilde{\SO K \SO Kf}$ in the whole space $\R^3\times\R^3$. 
We claim 
that $\widetilde{\SO K \SO Kf}$ belongs to $L^2_v(\R^3;H^{\frac{1}{2}-\epsilon}_x(\R^3))$ for any $\epsilon\in(0,\frac 1 2)$ and 
\begin{equation}
    \| \widetilde{\SO K \SO Kf}\|^2_{L^2_v(\R^3;H^{\frac{1}{2}-\epsilon}_x(\R^3))} \leq \frac{C}{\epsilon}\, \| f \|^2_{ L^2(\O\times \R^3)}
\end{equation}
for some constant $C$.
\begin{proof}[Proof of Lemma~\ref{lemma:soksokextension}] 
    Recall $f_1:=\SO Kf$ and $f_2:=\SO K \SO Kf$.
    Since $\widetilde{f_2}(\cdot,v)$ vanishes outside of $\O$, we have  
    \begin{equation} \label{eq:soksokextension}
        \int\limits_{\R^3}\int\limits_{\R^3}\int\limits_{\R^3} \frac{|\widetilde{f_2}(x,v)-\widetilde{f_2}(y,v)|^2}{|x-y|^{4-2\epsilon}} \,dxdydv  = I_1 +  I_2 + I_3,
    \end{equation}
    where 
    \begin{align}
        I_1:=& \int\limits_{\R^3}\int\limits_{\O}\int\limits_{\O} \frac{|f_2(x,v)-f_2(y,v)|^2}{|x-y|^{4-2\epsilon}} \,dxdydv, \\
        I_2:=& \int\limits_{\R^3}\int\limits_{\O}\int\limits_{\R^3\setminus\O} \frac{|f_2(y,v)|^2}{|x-y|^{4-2\epsilon}} \,dxdydv, \\
        I_3:=& \int\limits_{\R^3}\int\limits_{\R^3\setminus\O}\int\limits_{\O} \frac{|f_2(x,v)|^2}{|x-y|^{4-2\epsilon}} \,dxdydv.
    \end{align}
    Regarding the boundedness of $I_1$, by Lemma~\ref{lemma:soksok}, we have 
    \begin{equation} \label{eq:I1soksokextension}
        \begin{split}
        I_1& =\int\limits_{\R^3}\int\limits_{\O}\int\limits_{\O} \frac{|f_2(x,v)-f_2(y,v)|^2}{|x-y|^{4}}\,|x-y|^{2\epsilon} \,dxdydv \\
        &\leq\max\{\diam\O,1\} \int\limits_{\R^3}\int\limits_{\O}\int\limits_{\O} \frac{|f_2(x,v)-f_2(y,v)|^2}{|x-y|^{4}}\,dxdydv \\
        &\leq C\, \| f \|^2_{L^2(\O\times\R^3)}.
    \end{split}\end{equation}
    By symmetry, one can see that $I_2=I_3$. Consequently, the remaining task is to deal with $I_2$. 
    We observe that, for $y\in\O$, 
    \begin{equation} \label{eq:soksokextensionEstimate1}
        \int\limits_{\R^3\setminus\O}\frac{1}{|x-y|^{4-2\epsilon}}\,dx \leq  \int\limits_{\R^3\setminus B_{d_y}(y)}\frac{1}{|x-y|^{4-2\epsilon}}\,dx \leq C\, d_y^{-1+2\epsilon}.
    \end{equation}
    By Proposition~\ref{prop:SKsquare}, we have 
    \begin{equation} \label{eq:soksokextensionEstimate2}
        \begin{split}
        |f_2(y,v)|^2=&|\SO K f_1(y,v)|^2 \\
        \leq& C\, \int_{\R^3}\int^{|y-q_-(y,v)|}_0  \frac{1}{|v|} |k(v,v_*)||f_1(y-r\hat{v},v_*)|^2\,dr dv_*.
    \end{split}\end{equation}
    Therefore, first combining \eqref{eq:soksokextensionEstimate1} and \eqref{eq:soksokextensionEstimate2} and performing 
    the change of variable 
    $y'=y-r\hat{v}$ as in Lemma~\ref{lemma:ChangeOfVariable1}, we deduce that 
    \begin{equation}\begin{split}
        I_2 &\leq  C \,\int\limits_{\R^3}\int\limits_{\O}\int\limits_{\R^3}\int^{|y-q_-(y,v)|}_0  \frac{1}{|v|}| k(v,v_*)||f_1(y-r\hat{v},v_*)|^2 d_{y}^{-1+2\epsilon} \,dr dy dv dv_* \\
        & = C\,\int\limits_{\R^3}\int\limits_{\O}\int\limits_{\R^3}\int^{|y'-q_+(y'v)|}_0  \frac{1}{|v|}| k(v,v_*)||f_1(y',v_*)|^2 d_{y'+r\hat{v}}^{-1+2\epsilon} \,dr dy' dv dv_* \\
        & \leq \frac{C}{\epsilon}\,\int\limits_{\R^3}\int\limits_{\O}\int\limits_{\R^3} \left( \frac{1}{|v|}| k(v,v_*)|\,dv \right) \,|f_1(y',v_*)|^2d_{y'}^{-\frac{1}{2}+2\epsilon} \, dy' dv_* \\
        & \leq \frac{C}{\epsilon}\, \int\limits_{\R^3}\int\limits_{\O} |f_1(y',v_*)|^2d_{y'}^{-\frac{1}{2}+2\epsilon}\,dy' dv_*,
    \end{split}\end{equation}
    where the third inequality follows from Lemma~\ref{lemma:FracIntegralOverChord} and the last inequality follows from Proposition~\ref{prop:1overvk}. 
    Applying Proposition~\ref{prop:SKsquare} again, we have 
    \begin{equation}
        |f_1(y',v_*)|^2\leq C\, \int_{\R^3}\int^{|y'-q_-(y',v_*)|}_0  \frac{1}{|v_*|} |k(v_*,w)||f(y'-r\hat{v_*},w)|^2\,drdw.
    \end{equation}
    Then, it follows that 
    \begin{equation}\begin{split} \label{eq:fdintegral}
        &\int\limits_{\R^3}\int\limits_{\O} |f_1(y',v_*)|^2d_{y'}^{-\frac{1}{2}+2\epsilon}\,dy' dv_*  \\
        &\quad \leq C\, \int\limits_{\R^3}\int\limits_{\O} \int\limits_{\R^3} \int^{|y'-q_-(y',v_*)|}_0 \frac{1}{|v_*|} |k(v_*,w)||f(y'-r\hat{v_*},w)|^2 d_{y'}^{-\frac{1}{2}+2\epsilon}\,dr dw dy' dv_*  \\
        &\quad = C\, \int\limits_{\R^3}\int\limits_{\R^3} \int\limits_{\O} \int^{|y''-q_+(y'',v_*)|}_0 \frac{1}{|v_*|} |k(v_*,w)||f(y'',w)|^2 d_{y''+r\hat{v_*}}^{-\frac{1}{2}+2\epsilon}\,dr dy'' dv_* dw, 
    \end{split}\end{equation} \label{eq:fdintegral2}
    \noindent where $y''=y'-r\hat{v_*}$. Similarly, by Lemma~\ref{lemma:FracIntegralOverChord} and Proposition~\ref{prop:1overvk}, we conclude that 
    \begin{equation} \label{eq:I2soksokextension}
        \begin{split}
        I_2 & \leq \frac{C}{\epsilon}\, \int\limits_{\R^3}\int\limits_{\O} |f_1(y',v_*)|^2d_{y'}^{-\frac{1}{2}+2\epsilon}\,dy' dv_* \\
        & \leq \frac{C}{\epsilon}\, \int\limits_{\R^3}\int\limits_{\O} \int\limits_{\R^3}  \frac{1}{|v_*|} |k(v_*,w)||f(y'',w)|^2 \, dv_* dy'' dw \\
        & \leq \frac{C}{\epsilon}\, \int\limits_{\R^3}\int\limits_{\O} |f(y'',w)|^2\, dy'' dw.
    \end{split}\end{equation}
    Combining \eqref{eq:I1soksokextension} and \eqref{eq:I2soksokextension} completes the proof.
\end{proof}
Lemma~\ref{lemma:soksokextension} allows us to improve regularity results via Lemma~\ref{lemma:ksk}.
\begin{cor} \label{cor:ksoksoksok}
    The operator $K\SO K\SO K\SO K :L^2(\O\times\R^3)\to L^2_v(\R^3;H^{1-\epsilon}_x(\O))$ is bounded for any $\epsilon\in (0,\frac 1 2)$.
\end{cor}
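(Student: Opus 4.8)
The plan is to factor $K\SO K\SO K\SO K = (K\SO K)\circ(\SO K\SO K)$ and then chain Lemma~\ref{lemma:soksokextension} with Lemma~\ref{lemma:ksk}. Fix $f\in L^2(\O\times\R^3)$ and $\epsilon\in(0,\tfrac12)$, and set $h:=\SO K\SO K f=f_2$, with zero extension $\widetilde h=Zh$. By Lemma~\ref{lemma:soksokextension}, $\widetilde h\in L^2_v(\R^3;H^{\frac12-\epsilon}_x(\R^3))$ with $\|\widetilde h\|_{L^2_v(\R^3;H^{\frac12-\epsilon}_x(\R^3))}\le \frac{C}{\sqrt\epsilon}\|f\|_{L^2(\O\times\R^3)}$, and hence, by the equivalence \eqref{eq:SobolevEquivalence} on the whole space, $\widetilde h\in L^2_v(\R^3;\tilde H^{\frac12-\epsilon}_x(\R^3))$ as well.

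Next I would use the identity \eqref{1iteration}, $SK(\widetilde h)\big|_{\O\times\R^3}=\SO K h$, together with the fact that $K$ acts only in the velocity variable and therefore commutes with restriction in $x$ to $\O$. This yields
\[
    K\SO K\SO K\SO K f \;=\; K\SO K h \;=\; \bigl(KSK\widetilde h\bigr)\big|_{\O\times\R^3}.
\]
Since $s:=\tfrac12-\epsilon\ge 0$, Lemma~\ref{lemma:ksk} gives $KSK\widetilde h\in L^2_v(\R^3;\tilde H^{1-\epsilon}_x(\R^3))$ with norm bounded by $C\|\widetilde h\|_{L^2_v(\R^3;\tilde H^{\frac12-\epsilon}_x(\R^3))}$. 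Applying \eqref{eq:SobolevEquivalence} once more, $KSK\widetilde h\in L^2_v(\R^3;H^{1-\epsilon}_x(\R^3))$; and restricting to $\O$ only decreases this norm, because in Definition~\ref{FracSobolev} both the $L^2$ part and the Slobodeckij double integral are taken over the smaller sets $\O$ and $\O\times\O$. Combining these inequalities gives $\|K\SO K\SO K\SO K f\|_{L^2_v(\R^3;H^{1-\epsilon}_x(\O))}\le C_\epsilon\|f\|_{L^2(\O\times\R^3)}$, which is the assertion.

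Given Lemmas~\ref{lemma:ksk} and \ref{lemma:soksokextension}, the argument is essentially bookkeeping; the one point that needs care is the compatibility of the two fractional Sobolev norms across the restriction step — one must invoke the Fourier/Slobodeckij equivalence only on $\R^3$, where it is available, and then restrict to $\O$ using monotonicity of the $H^s(\O)$ norm under $\O\subset\R^3$. I do not expect the dependence of the constants on $\epsilon$ to cause trouble here, since the statement only claims boundedness for each fixed $\epsilon\in(0,\tfrac12)$; if one wished to track it, one would have to weigh the blow-up of the Slobodeckij–Fourier equivalence constant as $1-\epsilon\to 1$ against the $\tfrac1{\sqrt\epsilon}$ factor supplied by Lemma~\ref{lemma:soksokextension}.
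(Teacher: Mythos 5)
Your proof is correct and follows the same route as the paper: apply Lemma~\ref{lemma:soksokextension} to place the zero extension $\widetilde{\SO K\SO K f}$ in $L^2_v(\R^3;\tilde H^{\frac12-\epsilon}_x(\R^3))$, then Lemma~\ref{lemma:ksk} to gain $\tfrac12$ a derivative, and finally restrict to $\O$, using the identity $\bigl(KSK\widetilde{h}\bigr)\big|_{\O\times\R^3}=K\SO K h$ and the equivalence \eqref{eq:SobolevEquivalence}. The only difference is that you spell out the commutation of $K$ with restriction and the norm-monotonicity under restriction more explicitly than the paper does.
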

\begin{proof}
    For given $f\in L^2(\O\times\R^3)$ and $\epsilon\in (0,\frac 1 2)$, Lemma~\ref{lemma:soksokextension} implies 
    \begin{equation}
    \widetilde{\SO K \SO Kf} \in L^2_v(\R^3;H^{\frac{1}{2}-\epsilon}_x(\R^3)) = L^2_v(\R^3;\tilde{H}^{\frac{1}{2}-\epsilon}_x(\R^3)).
    \end{equation} 
    By Lemma~\ref{lemma:ksk}, we obtain 
    \begin{equation}
        KSK(\widetilde{\SO K \SO Kf})\in L^2_v(\R^3;\tilde{H}^{1-\epsilon}_x(\R^3)).
    \end{equation}
    Since
    \begin{equation}
        \left.\biggl(KSK(\widetilde{\SO K \SO Kf})\biggr) \right|_{\O\times \R^3} = K\SO K\SO K\SO K f,
    \end{equation}
    consequently $K\SO K\SO K\SO K f$ belongs to $L^2_v(\R^3;H^{1-\epsilon}_x(\O))$. The boundedness of the operator is due to 
    boundedness of $KSK$ and $Z\SO K \SO K$.
\end{proof}
We are now ready to prove the last lemma in this section.
\begin{lem}
    The operator $\SO K\SO K\SO K\SO K :L^2(\O\times\R^3)\to L^2_v(\R^3;H^{1-\epsilon}_x(\O))$ is bounded for any $\epsilon\in (0,\frac 1 2)$. Therefore, $f_4\in L^2_v(\R^3;H^{1-\epsilon}_x(\O))$.
\end{lem}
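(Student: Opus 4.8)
The plan is to write $f_4=\SO K\SO K\SO K\SO K f=\SO(Kf_3)$ and to bound the Slobodeckij seminorm of $\SO h$ with $h:=Kf_3$. By Corollary~\ref{cor:ksoksoksok}, $h=K\SO K\SO K\SO K f\in L^2_v(\R^3;H^{1-\epsilon}_x(\O))$ with $\|h\|^2_{L^2_v(\R^3;H^{1-\epsilon}_x(\O))}\le C\,\|f\|^2_{L^2(\O\times\R^3)}$ for every $\epsilon\in(0,\frac{1}{2})$, while $f_4\in L^2(\O\times\R^3)$ is immediate from the $L^2$-boundedness of $\SO$ and $K$. By the symmetry of $D_1$ and $D_2$ it suffices to control $\int_{D_1}|x-y|^{-(5-2\epsilon)}|\SO h(x,v)-\SO h(y,v)|^2\,dxdydv$, where $D_1$ is as in \eqref{eq:domainD1}. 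On $D_1$ one has $\tau_-(x,v)\le\tau_-(y,v)$, so the decomposition of \eqref{eq:somega} yields the upper bound $I_1+I_2$, with $I_1$ coming from the common part of the two $s$-integrals and $I_2$ from the boundary layer $\tau_-(x,v)\le s\le\tau_-(y,v)$.

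For $I_1$ I would repeat verbatim the argument \eqref{eq:SOextension1}--\eqref{eq:SOextension} with $h=Kf_3$, using the extension $\bar h\in L^2_v(\R^3;H^{1-\epsilon}_x(\R^3))$ of $h$ supplied by \cite[Theorem~5.4]{di11} together with the translation change of variables $(x,y,s)\mapsto(x-sv,y-sv,s)$; this gives $I_1\le C\,\|h\|^2_{L^2_v(\R^3;H^{1-\epsilon}_x(\O))}\le C\,\|f\|^2_{L^2(\O\times\R^3)}$ by Corollary~\ref{cor:ksoksoksok}. Unlike in Lemma~\ref{lemma:SOPreservation}, no pointwise Maxwellian bound on $h$ is available, since $f$ is merely $L^2$; the role played there by the Maxwellian bound in the boundary layer estimate will instead be taken over, for $I_2$, by inequality \eqref{eq:SKsquare2} of Proposition~\ref{prop:SKsquare}.

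For $I_2$ I would run the computation from the proof of Lemma~\ref{lemma:soksok}, from \eqref{eq:somega2} through \eqref{eq:soksokI2step4}, only carrying the different exponent. The estimates \eqref{eq:soksokEstimate1}--\eqref{eq:soksokEstimate3} (Cauchy--Schwarz, Proposition~\ref{prop:ProjDistance2}, and the substitution $r=s|v|$) give
\[
  I_2\le C\int_{D_1}\frac{1}{N_-(x,v)^{1-\epsilon}\,|v|^{2-\epsilon}\,|x-y|^{4-\epsilon}}\int_{|x-q_-(x,v)|}^{|y-q_-(y,v)|}|h(y-r\hat v,v)|^2\,dr\,dxdydv,
\]
and then Lemma~\ref{lemma:ChangeOfVariable2} (with $q^1(x',v)=q_-(x,v)$ and $q^2(x',v)=q_+(x,v)$), Proposition~\ref{prop:ChordBound} and Remark~\ref{remark:comparison} (so that $|q^2(x',v)-q^1(x',v)|/N_-(q^1(x',v),v)^{1-\epsilon}\le C\,N_-(q^1(x',v),v)^{\epsilon}\le C$), and the inclusion $\O_{v,y'}\subset\R^3\setminus B_{d_{y'}}(y')$ for the $x'$-integral give
\[
  I_2\le C\int_{\R^3}\int_\O\frac{|h(y',v)|^2}{|v|^{2-\epsilon}\,d_{y'}^{1-\epsilon}}\,dy'dv.
\]
The only change from Lemma~\ref{lemma:soksok} is that the exponent $3+\epsilon$ there has become $4-\epsilon$ here, so that the harmless weight $d_{y'}^{-\epsilon}$ of \eqref{eq:soksokI2step4} is replaced by the heavier $d_{y'}^{-1+\epsilon}$.

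From this point I would peel off the factors of $h=Kf_3=(K\SO)^3(Kf)$: three peels, each applying inequality \eqref{eq:SKsquare2}, then the measure-preserving change of variables of Lemma~\ref{lemma:ChangeOfVariable1}, then a radial integral, then a velocity integral, followed by a final direct Cauchy--Schwarz estimate of $|Kf(z,w')|^2$ against $|k(w',\cdot)|$. The decisive point is the weight bookkeeping. In the first peel the radial integral is $\int_0^{|q_+-y''|}d_{y''+r\hat{v_*}}^{-1+\epsilon}\,dr\le\frac{C}{\epsilon}\,d_{y''}^{-\frac{1}{2}+\epsilon}$ by \eqref{eq:FracIntegralOverChord2}; this is the only source of the factor $\epsilon^{-1}$ and the reason the hypothesis $\epsilon<\frac{1}{2}$ is required. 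In the second peel the residual weight is absorbed by $\int_0^{|q_+-y'''|}d_{y'''+r\hat w}^{-\frac{1}{2}+\epsilon}\,dr\le C$, which is \eqref{eq:FracIntegralOverChord}. In the third peel the radial integral is simply $\le\diam\O$, and in the last step $|Kf(z,w')|^2\le C\int|k(w',u)||f(z,u)|^2\,du$ by Corollary~\ref{cor:k}. Every velocity integral that appears, namely $\int|k(v,v_*)||v|^{-(2-\epsilon)}\,dv$, $\int|k(v_*,w)||v_*|^{-1}\,dv_*$, $\int|k(w,w')||w|^{-1}\,dw$ and $\int|k(w',u)||w'|^{-1}\,dw'$, is bounded by Proposition~\ref{prop:1overvk}, and $\int|k(w',u)|\,du\le C$ by Corollary~\ref{cor:k}. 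The chain therefore terminates in $\frac{C}{\epsilon}\|f\|^2_{L^2(\O\times\R^3)}$, so $I_2\le\frac{C}{\epsilon}\|f\|^2_{L^2(\O\times\R^3)}$; together with $I_1$ and the symmetric bound on $D_2$ this gives $\|f_4\|^2_{L^2_v(\R^3;H^{1-\epsilon}_x(\O))}\le\frac{C}{\epsilon}\|f\|^2_{L^2(\O\times\R^3)}$, which is the asserted boundedness of $\SO K\SO K\SO K\SO K$ and in particular shows $f_4\in L^2_v(\R^3;H^{1-\epsilon}_x(\O))$. I expect the main obstacle to be precisely this bookkeeping: verifying that the single unit of ``distance power'' lost in the reduction of $I_2$ is exactly the amount that \eqref{eq:FracIntegralOverChord2} can still absorb (at the cost of $\epsilon^{-1}$), and that every subsequent weight and velocity integral produced by the iterated $K\SO$-structure stays within the ranges covered by Lemma~\ref{lemma:FracIntegralOverChord}, Proposition~\ref{prop:1overvk} and Corollary~\ref{cor:k}.
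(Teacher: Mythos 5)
Your proposal is correct and follows essentially the same strategy as the paper: split the Slobodeckij integral over $D_1$ into $I_1+I_2$, treat $I_1$ via the extension argument of Lemma~\ref{lemma:SOPreservation} using Corollary~\ref{cor:ksoksoksok}, and reduce $I_2$ via Lemma~\ref{lemma:ChangeOfVariable2}, Proposition~\ref{prop:ChordBound}, the inclusion $\O_{v,y'}\subset\R^3\setminus B_{d_{y'}}(y')$, and then iterated applications of Proposition~\ref{prop:SKsquare}, Lemma~\ref{lemma:ChangeOfVariable1}, Lemma~\ref{lemma:FracIntegralOverChord}, and Proposition~\ref{prop:1overvk}. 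The only (immaterial) difference is the peeling order at the end: the paper applies \eqref{eq:SKsquare2} once, then Cauchy--Schwarz on $Kf_2$, then \eqref{eq:SKsquare} on $f_2=\SO Kf_1$ and concludes by $L^2$-boundedness of $\SO K$, whereas you iterate \eqref{eq:SKsquare2} three times before the final Cauchy--Schwarz on $Kf$; both bookkeepings terminate identically.
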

\begin{proof}
    We shall prove this lemma in a similar fashion as we prove Lemma~\ref{lemma:soksok}. 
    To do so, for given $f\in L^2(\O\times\R^3)$ and $\epsilon\in(0,\frac 1 2)$, it suffices to show 
    \begin{equation}
        \int_{D_1}\frac{|f_4(x,v)-f_4(y,v)|^2}{|x-y|^{5-2\epsilon}}\, dxdydv \leq C\, \| f\|^2_{L^2(\O\times\R^3)},
    \end{equation}
    where $D_1$ is as defined by \eqref{eq:domainD1}. According to Corollary~\ref{cor:ksoksoksok}, it follows that  
    \begin{equation} \label{eq:kf3}
        \| Kf_3 \|_{ L^2_v(\R^3;H^{1-\epsilon}_x(\O))} \leq C\, \| f\|_{L^2(\O\times\R^3)}.
    \end{equation}
    In domain $D_1$, we have 
    \begin{equation}\begin{split}
        &|S_\O Kf_3 (x,v)-S_\O Kf_3(y,v)|^2 \\
        &\qquad \leq 2 \left| \int^{\tau_-(x,v)}_0 e^{-\nu(v)s}(Kf_3(x-sv,v)-Kf_3(y-sv,v))\,ds \right|^2 \\
        &\qquad \quad+2 \left| \int^{\tau_-(y,v)}_{\tau_-(x,v)}e^{-\nu(v)s}Kf_3(y-sv,v)\,ds \right|^2. 
    \end{split}\end{equation}
    Therefore, we have 
    \begin{equation}
        \int\limits_{D_1} \frac{|f_4(x,v)-f_4(y,v)|^2}{|x-y|^{5-2\epsilon}}\, dxdydv \leq I_1 +I_2,
    \end{equation}
    where
    \begin{align}
        I_1 &:= \int_{D_1} \frac{2 \left| \int^{\tau_-(x,v)}_0 e^{-\nu(v)s}(Kf_3(x-sv,v)-Kf_3(y-sv,v))\,ds \right|^2}{|x-y|^{5-2\epsilon}}\, dxdydv, \\
        I_2 &:= \int_{D_1} \frac{2 \left| \int^{\tau_-(y,v)}_{\tau_-(x,v)}e^{-\nu(v)s}Kf_3(y-sv,v)\,ds \right|^2}{|x-y|^{5-2\epsilon}}\, dxdydv.
    \end{align}
    To estimate $I_1$, taking $h=Kf_3$ in the steps of \eqref{eq:SOextension1} and 
    \eqref{eq:SOextension}, in the same fashion, we conclude
    \begin{equation}
        \begin{split}
        &\int\limits_{D_1}\frac{\left| \int^{\tau_-(x,v)}_0 e^{-\nu(v)s}(Kf_3(x-sv,v)-Kf_3(y-sv,v))\,ds \right|^2}{|x-y|^{5-2\epsilon}}\,dxdydv  \\
        &\qquad\leq C\, \| Kf_3\|^2_{L^2_v(\R^3;H^{1-\epsilon}_x(\O))}  \\
        &\qquad \leq  C\, \| f\|^2_{L^2(\O\times\R^3)},
        \end{split}
    \end{equation}
    where we have used \eqref{eq:kf3}. 
    Concerning $I_2$, we proceed as in \eqref{eq:soksokI2step1}-\eqref{eq:soksokI2step5} to deduce 
    \begin{equation}\begin{split}
        I_2& \leq C\, \int\limits_{\R^3}\int\limits_{\O}\int\limits_{\O}\int^{|y-q_-(y,v)|}_{|x-q_-(x,v)|} \frac{|Kf_3(y-r\hat{v},v)|^2}{N_-(x,v)^{1-\epsilon}|v|^{2-\epsilon}|x-y|^{4-\epsilon}}\, dr dxdydv \\
        & =C\,\int\limits_{\R^3}\int\limits_{\O}\int\limits_{\O_{v,y'}}\int^{\min\{|q^2(x',v)-x'|,|q_+(y',v)-y'|\}}_{|q^1(x',v)-x'|}\\
        &\qquad\quad \frac{|Kf_3(y',v)|^2}{N_-(q^1(x',v),v)^{1-\epsilon}|v|^{2-\epsilon}|x'-y'|^{4-\epsilon}}\, dr dx'dy'dv \\
        & \leq C\, \int\limits_{\R^3}\int\limits_{\O}\int\limits_{\O_{v,y'}}  \frac{|Kf_3(y',v)|^2 }{|v|^{2-\epsilon}|x'-y'|^{4-\epsilon}}\,dx'dy'dv 
    \end{split}\end{equation}
    \begin{equation}\begin{split}
        \quad& \leq C\,\int\limits_{\R^3}\int\limits_{\O} \frac{|Kf_3(y',v)|^2 }{|v|^{2-\epsilon}d^{1-\epsilon}_{y'}}\,dy'dv \\
        \quad& \leq C\, \int\limits_{\R^3}\int\limits_{\O}\int\limits_{\R^3} \int^{|y'-q_-(y',v_*)|}_0\frac{ |k(v,v_*)||Kf_2(y'-r\hat{v_*},v_*)|^2}{|v_*||v|^{2-\epsilon}d_{y'}^{1-\epsilon}}  \,drdv_*dy' dv \\
        \quad&  =C\, \int\limits_{\R^3}\int\limits_{\O}\int\limits_{\R^3} \int^{|y''-q_+(y'',v_*)|}_0\frac{ |k(v,v_*)||Kf_2(y'',v_*)|^2}{|v_*||v|^{2-\epsilon}d_{y''+r\hat{v_*}}^{1-\epsilon}}  \,drdv_*dy'' dv \\
        \quad& \leq C\, \int\limits_{\R^3}\int\limits_{\O}\int\limits_{\R^3} \frac{ |k(v,v_*)||Kf_2(y'',v_*)|^2}{|v_*||v|^{2-\epsilon}d_{y''}^{\frac{1}{2}-\epsilon}}  \,dv_*dy'' dv,
    \end{split}\end{equation}
    where we utilized \eqref{eq:FracIntegralOverChord2} of Lemma~\ref{lemma:FracIntegralOverChord} in the last line instead. 
    Continuing as in \eqref{eq:soksokI2step6} yields 
    \begin{equation}\begin{split}
        I_2&\leq C\, \int\limits_{\R^3}\int\limits_{\O}\int\limits_{\R^3} \frac{ |k(v,v_*)||Kf_2(y'',v_*)|^2}{|v_*||v|^{2-\epsilon}d_{y''}^{\frac{1}{2}-\epsilon}}  \,dv dy'' dv_* \\
        &\leq C\,\int\limits_{\O}\int\limits_{\R^3}\frac{|Kf_2(y'',v_*)|^2}{|v_*|d_{y''}^{\frac{1}{2}-\epsilon}}\, dv_* dy'' \\
        &\leq C\, \int\limits_{\O} \int\limits_{\R^3} \frac{1}{|v_*|d_{y''}^{\frac{1}{2}-\epsilon}} \left(\int\limits_{\R^3}|k(v_*,w)|\,dw \right)\left(\int\limits_{\R^3}|k(v_*,w)||f_2(y'',w)|^2\,dw \right)\, dv_*dy'' \\
        &\leq C\, \int\limits_{\O} \int\limits_{\R^3}\int\limits_{\R^3} \frac{|k(v_*,w)||f_2(y'',w)|^2}{|v_*|d_{y''}^{\frac{1}{2}-\epsilon}}\,dv_* dw dy'' \\
        &\leq C\, \int\limits_{\O} \int\limits_{\R^3} \frac{|f_2(y'',w)|^2}{d_{y''}^{\frac{1}{2}-\epsilon}} dw dy''.
    \end{split}\end{equation}
    The last line above is \eqref{eq:fdintegral} with $f_2$ in place of $f_1$. In the same fashion, we can conclude that 
    \begin{equation}\begin{split}
        I_2 \leq& C\, \int\limits_{\O} \int\limits_{\R^3} \frac{|f_2(y'',w)|^2}{d_{y''}^{\frac{1}{2}-\epsilon}} dw dy'' \\
        \leq& C\, \int\limits_{\R^3}\int\limits_{\O} |f_1(z,w_*)|^2\, dz dw_* \\
        \leq& C\, \| f \|^2_{L^2(\O\times\R^3)}. \\
    \end{split}\end{equation}
    This completes the proof.
\end{proof}
\begin{rmk}
From our calculation, we are not able to show that $\widetilde{f_4}\in L^2_v(\R^3;H^{\frac{1}{2}}_x(\R^3))$. As a result, we 
can not further improve regularity by the same method as in Corollary~\ref{cor:ksoksoksok}.
\end{rmk}
~\\
~\\
\noindent \textbf{Acknowledgements: }The first author is supported in part by MOST grant 108-2628-M-002 -006 -MY4 and 106-2115-M-002 -011 -MY2. 
The third author is supported  by NCTS and MOST grant 104-2628-M-002-007-MY3.

    \end{document}